\documentclass{article}

\usepackage{amsmath}
\usepackage{amsfonts}
\usepackage{amssymb}
\usepackage{amsthm}
\usepackage{mathtools}
\usepackage{graphicx}
\usepackage{color}
\usepackage{fancyhdr}
\usepackage{geometry}
\usepackage{hyperref}
\usepackage{mathrsfs}
\usepackage[nottoc,numbib]{tocbibind}

\usepackage{verbatim}

\allowdisplaybreaks

\pagestyle{fancy}
\makeatletter
\DeclareRobustCommand{\format@sec@number}[2]{{\normalfont\upshape#1}#2}

\makeatother

\fancyhf{}
\fancyhead[L]{\itshape\nouppercase{\rightmark}}
\fancyhead[R]{\thepage}

\def\e{\varepsilon}

\def\a{\alpha}
\def\b{\beta}

\def\d{\delta}

\def\l{\lambda}
\def\L{\Lambda}

\def\s{\sigma}

\def\R{\mathbb R}
\def\N{{\mathbb N}}

\def\Z{\mathbb Z}
\def\T{\mathbb T}

\def\K{\mathbb K}

\def\S{\mathbb S}

\def\C{\mathbb C}

\def\TT{\mathcal T}

\def\({\biggl(}
\def\){\biggr)}
\def\<{\mathbf{\langle}}
\def\>{\mathbf{\rangle}}

\def\diffr{\text{Diff }^\omega_\rho(\T^2,\mu)}

\def\Trees{\mathcal{T}rees}

\newcommand{\Meng}[2]{\left\{#1\mathrel{}\middle|\mathrel{}#2\right\}}
\newcommand{\abs}[1]{\left\lvert#1\right\rvert}

\numberwithin{equation}{section}

\newtheorem{theorem}[equation]{Theorem}
\newtheorem{proposition}[equation]{Proposition}
\newtheorem{lemma}[equation]{Lemma}
\newtheorem{corollary}[equation]{Corollary}
\newtheorem{fact}[equation]{Fact}

\newtheorem{maintheorem}{Theorem}

\theoremstyle{definition}
\newtheorem{definition}[equation]{Definition}

\theoremstyle{definition}

\theoremstyle{remark}
\newtheorem*{remark}{Remark}

\title{Real-analytic realization of Uniform Circular Systems and some applications}
\author{Shilpak Banerjee \footnote{ Indraprastha Institute of Information Technology, Delhi, India. Portion of this paper were completed when S.B. was partially supported by NSF grant DMS-16-02409 at Pennsylvania State University} \hspace{1cm} and \hspace{1cm} Philipp Kunde \footnote{Pennsylvania State University, Department of Mathematics, State College, PA 16802, USA. P.K. acknowledges financial support from a DFG Forschungsstipendium under Grant No. 405305501.}}

\graphicspath{{pic/}}

\begin{document}




\maketitle

\begin{abstract}
Recently Matthew Foreman and Benjamin Weiss showed in a series of papers that smooth ergodic diffeomorphisms of a compact manifold are unclassifiable up to measure-isomorphism. In this paper we show that the uniform circular systems used in the work of Foreman-Weiss admit real-analytic realizations on the torus. As a consequence we obtain the same anti-classification result for real-analytic ergodic diffeomorphisms on the torus. In another application we show the existence of an uncountable family of pairwise non-Kakutani equivalent real-analytic diffeomorphisms on the torus.
\end{abstract}

\tableofcontents

\section{Introduction}\label{section introduction}
In the foundational paper \cite{V} John von Neumann formulated the so-called \textit{isomorphism problem} for classifying the ergodic measure preserving transformations up to measure theoretic isomorphisms, i.e. it asks to determine when two measure-preserving transformations are isomorphic. It has been a guiding light for directions of research within ergodic theory, and has been solved only for some special classes of transformations, e.g.
\begin{itemize}
\item In 1942 von Neumann and Paul R. Halmos showed that the spectrum of the associated Koopman operator is a complete isomorphism invariant for ergodic measure preserving transformations with pure point spectrum (see \cite{HV}). 
\item In 1970 Donald Ornstein showed that Bernoulli shifts are completely classified by their entropy (see \cite{O}).
\end{itemize}
Many properties of transformations like mixing of various types or finite rank have been characterized and studied in connection with this problem but the general problem remained intractable. Starting in the late 1990's, so-called \textit{anti-classification} results have been established and demonstrated in a rigorous way that classification is not possible. This type of results requires a precise definition of what it means to obtain a  classification result. 

Informally, a classification is a method of determining isomorphism between transformations, perhaps by computing other invariants for which equivalence is easy to determine. Given a transformation (abstract or smooth or real-analytic) $T$ defined on a measure space (or appropriate manifold), one can ask whether or not it is possible to accurately describe the set $U$ of all other transformations (abstract or smooth or real-analytic) that are measure theoretically isomorphic to $T$. If a classification for such systems is available, then one can expect to proceed by computing invariants and reducing a set containing $T$ to smaller and smaller sets by requiring all elements to have a uniform value of the invariant  being considered. For example, one could start with a $T$ and look at the set containing all $S$ with the same entropy as that of $T$ and then in the next step, one could further restrict by requiring their Koopman operator to have the same set of eigenvalues and so on and so forth. So if one wants to prove that certain transformations are not classifiable, one would need to show that no countable (possibly transfinite) protocol, whose basic input is membership in open or closed sets exists that can be used to determine membership in a particular equivalence class. So in other words, the equivalence relation should not be Borel. So, the Borel/non-Borel distinction is a natural and key notion in anti-classification results:

\begin{itemize}
\item In 1996 Ferenc Beleznay and Matthew Foreman showed that the class of measure distal transformations used in early ergodic theoretic proofs of Szemeredi's theorem is not a Borel set \cite{BF}. 
\item In 2001 Greg Hjorth introduced the notion of \emph{turbulence} and showed that there is no Borel way of attaching algebraic invariants to ergodic transformations that completely determine isomorphism \cite{H}.
\item Foreman and Benjamin Weiss improved Hjorth's result by proving that the conjugacy action of the measure preserving transformations is turbulent and hence no generic class can have a complete set of algebraic invariants \cite{FW04}.
\item Passing from a single transformation to pairs $(S,T)$ of measure preserving Hjorth \cite{H} showed that the equivalence relation defined by isomorphism is not a Borel set. Since his proof uses nonergodic transformations in an essential way, this left open the question what happens if one restricts to ergodic transformations. This was resolved by Foreman, Daniel J. Rudolph and Weiss who showed that the measure-isomorphism relation for ergodic measure preserving transformations is also not a Borel set \cite{FRW}.
\end{itemize}

Recently, in a series of papers (\cite{FW1}, \cite{FW2} and \cite{FW3}) Foreman and Weiss showed an anti-classification result for $C^{\infty}$ diffeomorphisms of compact manifolds by proving that the measure-isomorphism relation among pairs of volume preserving ergodic $C^{\infty}$-diffeomorphisms is not a Borel set with respect to the $C^{\infty}$-topology. Actually, it is a complete analytic set (see Definitions \ref{def:complete} and \ref{def:analytic}). 

The goal of this article is to upgrade the aforementioned anti-classification result to the real-analytic category on $\T^2$. At this juncture, we point out that the set of all volume preserving real-analytic diffeomorphisms on a torus is not a metrizable space. In fact it is very difficult to work in this space and we prefer to do all construction on a certain subset $\text{Diff }^\omega_\rho(\T^2,\mu)$. For some pre-specified finite number $\rho>0$ this subset consists of all volume preserving real-analytic diffeomorphisms homotopic to the identity whose lift to $\R^2$ allows a complexification extending to a band of width $\rho$ in the imaginary direction (see Section \ref{section real-analytic}). This set has the structure of a Polish space and we can state the following theorem.

\begin{maintheorem} \label{theo:intro}
For every $\rho >0$ the measure-isomorphism relation among pairs $(S,T)\in\text{Diff }^\omega_\rho(\T^2,\mu)\times \text{Diff }^\omega_\rho(\T^2,\mu)$  is a complete analytic set and hence not a Borel set.
\end{maintheorem}

Note that the space of all measure preserving real-analytic diffeomorphisms, $\text{Diff }^\omega(\T^2,\mu)\coloneqq\cup_{\rho>0}\text{Diff }^\omega_\rho(\T^2,\mu)$ can be equipped with the direct limit topology, i.e. a set is $U\subset\text{Diff }^\omega(\T^2,\mu)$ is open iff $U\cap\text{Diff }^\omega_\rho(\T^2,\mu)$ is open in $\text{Diff }^\omega_\rho(\T^2,\mu)$. See \cite[appendix 2]{L} for a detailed discussion of these spaces. However the space obtained in this way in not metrizable. We can see the following result as an immediate consequence of theorem \ref{theo:intro}.

\begin{maintheorem} \label{theo:intro2}
The measure-isomorphism relation among pairs $(S,T)\in\text{Diff }^\omega(\T^2,\mu)\times\text{Diff }^\omega(\T^2,\mu)$  is not a Borel set, when $\text{Diff }^\omega(\T^2,\mu)$ is equipped with the direct limit topology.
\end{maintheorem}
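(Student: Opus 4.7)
The plan is to deduce this from the main realization theorem of this paper (Theorem~\ref{theorem 57}) together with the anti-classification machinery of the companion Foreman--Weiss papers \cite{FW2}, \cite{FW3}. The standard recipe for showing that a definable equivalence relation fails to be Borel is to exhibit a Borel reduction from a known non-Borel equivalence relation. Here I would reduce from measure-isomorphism on a suitable Polish space $\mathcal C$ parametrising strongly uniform circular systems with fast-growing coefficients to measure-isomorphism on $\text{Diff }^\omega_\rho(\T^2,\mu)$.

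First I would fix the Polish structure on $\text{Diff }^\omega_\rho(\T^2,\mu)$ recorded in section~\ref{section real-analytic}, and recall from \cite{FW2}, \cite{FW3} that the isomorphism relation on $\mathcal C$ is not Borel (indeed, is complete analytic). Then the $(2)\Rightarrow(1)$ implication of Theorem~\ref{theorem 57} assigns to each $c\in\mathcal C$ a real-analytic AbC diffeomorphism $F(c)\in\text{Diff }^\omega_\rho(\T^2,\mu)$ that is measure-theoretically isomorphic to the circular system coded by $c$. The entire argument then rests on arranging the map $F:\mathcal C\to\text{Diff }^\omega_\rho(\T^2,\mu)$ to be Borel measurable.

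The main obstacle will be precisely this Borel measurability. The inductive real-analytic AbC construction of section~\ref{section real-analytic AbC} builds the conjugating diffeomorphisms $h_n$, $H_n$ and the rational rotations $\a_n$ as explicit functions of the combinatorial data coded by $c$, but the fast-growth conditions used to force convergence in the $\rho$-analytic topology must be specified uniformly in $c$, so that the limit $T=\lim T_n$ depends Borel-measurably on $c$. I would handle this by fixing, once and for all, universal recursive growth rates (depending only on the abstract AbC parameters of $c$) that suffice to guarantee convergence in $\text{Diff }^\omega_\rho(\T^2,\mu)$; the map $F$ then factors as a countable composition of continuous choices and is manifestly Borel. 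Combined with the converse $(1)\Rightarrow(2)$ of Theorem~\ref{theorem 57}, which ensures that $F(c)$ and $F(c')$ are isomorphic in $\text{Diff }^\omega_\rho(\T^2,\mu)$ if and only if the underlying circular systems are isomorphic, a hypothetical Borel isomorphism relation on $\text{Diff }^\omega_\rho(\T^2,\mu)$ would pull back through $F\times F$ to a Borel relation on $\mathcal C\times\mathcal C$, contradicting the Foreman--Weiss hardness result and completing the proof.
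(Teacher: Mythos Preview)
Your proposal is essentially the intended route: the paper explicitly declines to prove this theorem, stating that ``it follows when combined with the papers of Foreman and Weiss,'' and your sketch---use Theorem~\ref{theorem 57} to realize circular systems as elements of $\text{Diff }^\omega_\rho(\T^2,\mu)$, then pull back the isomorphism relation and appeal to the Foreman--Weiss hardness result---is exactly how that combination works.

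Two small remarks. First, you do not need the $(1)\Rightarrow(2)$ direction of Theorem~\ref{theorem 57} to obtain the biconditional $c\cong c'\iff F(c)\cong F(c')$: both implications follow immediately from the fact that $F(c)$ is measure-theoretically isomorphic to $c$ (the $(2)\Rightarrow(1)$ direction alone). Second, for the Borel (in fact continuous) measurability of $F$, the paper already records exactly the estimate you need: Proposition~\ref{proposition 58} shows that if two construction sequences agree through stage $M$ then the resulting diffeomorphisms are $d_\rho$-close to within $\e_M$, so the realization map is continuous from the natural Polish topology on construction sequences into $\text{Diff }^\omega_\rho(\T^2,\mu)$. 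Citing this would replace your informal ``countable composition of continuous choices'' argument.
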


The Foreman-Weiss anti-classification result and Theorem \ref{theo:intro} are related to another major question in ergodic theory dating back to the pioneering paper \cite{V}: The \textit{smooth realization problem} asks whether there are smooth versions to the objects and concepts of abstract ergodic theory and whether every ergodic measure-preserving transformation has a smooth model. Here, by a smooth model one means a smooth diffeomorphism of a compact manifold preserving a measure equivalent to the volume element which is isomorphic to the measure-preserving transformation. The only known restriction is due to A. G. Kushnirenko who proved that such a diffeomorphism must have finite entropy. On the other hand, there is a lack on general results on the smooth realization problem.

One way to show that not all finite ergodic measure preserving transformation have a smooth model would be to show that their classification is easier than the general classification result. But the Foreman-Weiss anti-classification result shows that the variety of ergodic transformations that have smooth models is rich enough so that the abstract isomorphism relation restricted to these smooth models is as complicated as it is in general. Theorem \ref{theo:intro} shows that this even holds true for real-analytic diffeomorphisms of $\T^2$.

One of the key steps in the recent work by Foreman and Weiss to adapt the methods from \cite{FRW} to the case of volume preserving $C^{\infty}$ diffeomorphisms is to show that a class of symbolic systems, the so-called strongly uniform circular systems (see Section \ref{subsec:circular}), can be realized as $C^{\infty}$ diffeomorphisms. To obtain these smooth realizations the so-called untwisted \emph{AbC method} is used. This method was introduced by D.V. Anosov and A. Katok in their seminal paper \cite{AK} and it is also widely known as the \emph{approximation by conjugation} method or the \emph{Anosov-Katok} method. In this paper we prove the real-analytic counterpart of the main result in \cite{FW1} that can be \emph{very loosely} summarized as the following theorem.  

\begin{maintheorem} \label{theo:loosely realization}
Let $T$ be an ergodic transformation on a standard measure space. Then the following are equivalent:
\begin{enumerate}
\item $T$ is isomorphic to an real-analytic (untwisted) Anosov-Katok diffeomorphim (satisfying some requirements).
\item $T$ is isomorphic to a (strongly uniform) circular system (with fast growing parameters) .
\end{enumerate}
\end{maintheorem}

For an accurate version of the above theorem stated with all the required technicalities we refer the reader to Theorem \ref{theorem 57}. Since this general realization of uniform circular systems as diffeomorphisms reduces questions about diffeomorphisms to combinatorial questions for symbolic shifts, it bears a lot of flexibility to address questions in the realm of the smooth realization problem. 

To exemplify the flexibility and strength of the real-analytic realization of circular systems we construct an uncountable family of real-analytic ergodic diffeomorphisms that are pairwise non-Kakutani equivalent. Recall that two ergodic transformations are said to be Kakutani equivalent if they are isomorphic to measurable cross-sections of the same ergodic flow.  Then it immediately follows from Abramov’s entropy formula, that two Kakutani equivalent automorphisms must have the same entropy type: zero entropy, finite entropy, or infinite entropy. It was a long-standing open problem whether these three possibilities for entropy completely characterized Kakutani equivalence classes. Until the work of A. Katok \cite{K75,K77} in the case of zero entropy, and J. Feldman \cite{Fe} in the general case, no other restrictions were known for achieving Kakutani equivalence. In \cite{Fe}, Feldman showed that there are at least two non-Kakutani equivalent ergodic transformations of entropy zero,
and likewise for finite positive entropy and infinite entropy. Ornstein, Rudolph, and Weiss \cite{ORW} upgraded Feldman's construction to obtain uncountably many  non-Kakutani equivalent ergodic MPT's of each entropy type. Based on their construction, M. Benhenda \cite{Ben} showed the existence of an uncountable family of pairwise non-Kakutani equivalent zero-entropy $C^{\infty}$ diffeomorphisms using the AbC method. As already indicated in \cite[Chapter 8]{Ka03}, this result also follows from the smooth realization of uniform circular systems in general. We present the details in Section \ref{subsec:uncountable} and combine it with our real-analytic realization of uniform circular systems to obtain the analogue in the real-analytic category.

\begin{maintheorem} \label{theo:uncountable}
For every $\rho>0$ there exists an uncountable family of ergodic diffeomorphisms in $\text{Diff }^\omega_\rho(\T^2,\mu)$ which are pairwise not Kakutani-equivalent.
\end{maintheorem}

In Section \ref{subsec:nonBern} we use this result to construct real-analytic non-Bernoulli diffeomorphisms with property $K$ on manifolds of dimension greater than $4$. Note that a Bernoulli-automorphism (i.\,e. a measure-preserving invertible transformation of a Lebesgue space isomorphic to a Bernoulli shift) has the $K$-property (i.\,e. a measure-preserving invertible transformation of a Lebesgue space obeying Kolmogorov's zero-one law) automatically. Originally, Kolmogorov conjectured that the converse holds also true. The first counterexample in the measurable category was constructed by Ornstein \cite{Or}. First $C^{\infty}$ counterexamples were obtained by Katok in dimension greater than $4$ \cite{Ka1}. Recently, A.\ Kanigowski, F.\ Rodriguez-Hertz, and K.\ Vinhage have found $C^{\infty}$ examples in dimension $4$ \cite{KRV}. On the other hand, smooth K-automorphisms in dimension $2$ are Bernoulli by Pesin theory \cite{Pe1}.

The first real-analytic examples of non-Bernoulli diffeomorphisms with property $K$ were obtained by Rudolph in \cite{Ru} as follows: Take a hyperbolic toral automorphism $S$ on $\mathbb{T}^2$ given as $\{(\theta,\eta): 0\leq \theta,\eta <1\}$ and suppose $T_t$ is the geodesic flow on the tangent space $TM$ to a compact hyperbolic surface. Now define
\[
\hat{S}((\theta,\eta),y) = (S(\theta,\eta),T_{\sin(\theta)}(y)),
\]
which is a real analytic $K$-diffeomorphisms and not Bernoulli. 

Actually, Theorem \ref{theo:uncountable} enables us to show the existence of uncountably many real-analytic diffeomorphisms measure theoretically
K, with the same entropy, and pairwise not isomorphic (not even Kakutani equivalent).

\subsection{Statement of anti-classification results}
To state our anti-classification results precisely we will need some notions and concepts from \textit{Descriptive Set Theory}. The main tool is the idea of a \emph{reduction}.

\begin{definition}
Let $X$ and $Y$ be Polish spaces and $A \subseteq X$, $B \subseteq Y$. A function $f:X \to Y$ \emph{reduces} $A$ to $B$ if and only if for all $x \in X$: $x\in A$ if and only if $f(x) \in B$. \\
Such a function $f$ is called a Borel (resp. continuous) reduction if $f$ is a Borel (resp. continuous) function.
\end{definition}

$A$ being reducible to $B$ can be interpreted as saying that $B$ is at least as complicated as $A$. We note that if $B$ is Borel and $f$ is a Borel reduction, then $A$ is also Borel. Taking the converse of this statement we get that if $A$ is not Borel, then $B$ is not Borel.

\begin{definition}\label{def:complete}
If $\mathcal{S}$ is a collection of sets and $B \in \mathcal{S}$, then $B$ is called \emph{complete} for Borel (resp. continuous) reductions if and only if every $A \in \mathcal{S}$ is Borel (resp. continuously) reducible to $B$.
\end{definition}
Following the interpretation above $B$ is said to be at least as complicated as each set in $\mathcal{S}$. 

\begin{definition}\label{def:analytic}
If $X$ is a Polish space and $B \subseteq X$, then $B$ is \emph{analytic} if and only if it is the continuous image of a Borel subset of a Polish space. Equivalently, there is a Polish space $Y$ and a Borel set $C\subseteq X \times Y$ such that $B$ is the $X$-projection of $C$.
\end{definition}

There are analytic sets that are not Borel. We use a canonical example of such a set: the collection of ill-founded trees. 

To introduce those, we consider the set $\mathbb{N}^{<\mathbb{N}}$
of finite sequences of natural numbers. A \emph{tree }is a set $\mathcal{T}\subseteq\mathbb{N}^{<\mathbb{N}}$
such that if $\tau=\left(\tau_{1},\dots,\tau_{n}\right)\in\mathcal{T}$
and $\sigma=\left(\tau_{1},\dots,\tau_{s}\right)$ with $s\leq n$
is an initial segment of $\tau$, then $\sigma\in\mathcal{T}$. If
$\sigma$ is an initial segment of $\tau$, then $\sigma$ is a \emph{predecessor}
of $\tau$ and $\tau$ is a \emph{successor} of $\sigma$. We define
the level $s$ of a tree $\mathcal{T}$ to be the collection of elements
of $\mathcal{T}$ that have length $s$. 

An \emph{infinite branch} through $\mathcal{T}$ is a function $f:\mathbb{N}\to\mathbb{N}$
such that for all $n\in\mathbb{N}$ we have $\left(f(0),\dots,f(n-1)\right)\in\mathcal{T}$.
If a tree has an infinite branch, it is called \emph{ill-founded}.
If it does not have an infinite branch, it is called \emph{well-founded}.

In the following, let $\left\{ \sigma_{n}:n\in\mathbb{N}\right\} $
be an enumeration of $\mathbb{N}^{<\mathbb{N}}$ with the property
that every proper predecessor of $\sigma_{n}$ is some $\sigma_{m}$
for $m<n$. Under this enumeration subsets $S\subseteq\mathbb{N}^{<\mathbb{N}}$
can be identified with characteristic functions $\chi_{S}:\mathbb{N}\to\left\{ 0,1\right\} $.
The collection of such $\chi_{S}$ can be viewed as the members of
on infinite product space $\left\{ 0,1\right\} ^{\mathbb{N}^{<\mathbb{N}}}$
homeomorphic to the Cantor space. Here, each function $a:\left\{ \sigma_{m}:m<n\right\} \to\left\{ 0,1\right\} $
determines a basic open set 
\[
\left\langle a\right\rangle =\left\{ \chi:\chi\upharpoonright\left\{ \sigma_{m}:m<n\right\} =a\right\} \subseteq\left\{ 0,1\right\} ^{\mathbb{N}^{<\mathbb{N}}}
\]
and the collection of all such $\left\langle a\right\rangle $ forms
a basis for the topology. The collection of trees is a closed (hence
compact) subset of $\left\{ 0,1\right\} ^{\mathbb{N}^{<\mathbb{N}}}$
in this topology. Moreover, the collection of trees containing arbitrarily
long finite sequences is a dense $\mathcal{G}_{\delta}$ subset. In
particular, this collection is a Polish space. We will denote the
space of trees containing arbitrarily long finite sequences by $\mathcal{T}rees$.

Since the topology on the space of trees was introduced via basic
open sets giving us finite amount of information about the trees in
it, we can characterize continuous maps defined on $\mathcal{T}rees$
as follows.
\begin{fact}
\label{fact:contTree}Let $Y$ be a topological space. Then a map
$f:\mathcal{T}rees\to Y$ is continuous if and only if for all open
sets $O\subseteq Y$ and all $\mathcal{T}\in\mathcal{T}rees$ with
$f(\mathcal{T})\in O$ there is $M\in\mathbb{N}$ such that for all
$\mathcal{T}^{\prime}\in\mathcal{T}rees$ we have: if $\mathcal{T}\cap\left\{ \sigma_{n}:n\leq M\right\} =\mathcal{T}^{\prime}\cap\left\{ \sigma_{n}:n\leq M\right\} $,
then $f\left(\mathcal{T}^{\prime}\right)\in O$. 
\end{fact}

As advertised the following classical fact (see e.g. \cite{Ke}) gives us an example
of a complete analytic set.
\begin{fact}\label{fact:ill} Let $\mathcal{T}rees$ be the space of trees.
\begin{enumerate}
    \item The collection of ill-founded trees is a complete
analytic subset of $\mathcal{T}rees$.
\item The collection of trees that have at least two distinct infinite branches is a complete
analytic subset of $\mathcal{T}rees$.
\end{enumerate}
\end{fact}

We also recall that the \emph{centralizer} of an invertible measure preserving transformation $T:(X,\mu)\to (X,\mu)$ is defined as $C(T)=\Meng{S:X\to X \text{ invertible m.p. transformation}}{S\circ T = T\circ S}$. Obviously, the powers $T^k$ belong to $C(T)$ for any $k\in \Z$. We also stress that we consider the centralizer in the group $\mathbf{MPT}$ of invertible measure preserving transformations and that $C(T)$ differs from the centralizer inside the group of diffeomorphisms.

Now we can present the precise statement of the main result of the paper that we will prove in Section \ref{section main proof}.

\begin{maintheorem} \label{theo:main anti}
For every $\rho >0$ there is a continuous function $F^s:\Trees \to \diffr$ such that for $\TT \in \Trees$, if $T=F^s(\TT)$:
\begin{enumerate}
    \item $\TT$ has an infinite branch if and only if $T \cong T^{-1}$
    \item $\TT$ has two distinct infinite branches if and only if 
    \[
    C(T) \neq \overline{ \Meng{T^n}{n\in \Z}}.
    \]
\end{enumerate}
\end{maintheorem}

Then the classical Fact \ref{fact:ill} implies the following anti-classification results.

\begin{maintheorem}
For every $\rho >0$ we have:
\begin{enumerate}
    \item $\Meng{T\in\diffr}{T\cong T^{-1}}$ is complete analytic.
    \item $\Meng{T\in\diffr}{C(T) \neq \overline{ \Meng{T^n}{n\in \Z}}}$ is complete analytic.
\end{enumerate}
\end{maintheorem}

Since the map $i(T)=(T,T^{-1})$ is a continuous mapping from $\diffr$ to $\diffr \times \diffr$ and reduces $\Meng{T}{T\cong T^{-1}}$ to $\Meng{(S,T)}{S\cong T}$, this result also yields that the measure-isomorphism relation among pairs $(S,T)$ of ergodic $\diffr$-diffeomorphisms of $\T^2$ is a complete analytic set and, hence, we obtain Theorem \ref{theo:intro} as a consequence.

\subsection{Outline of the paper}
The theme of this paper is to make certain changes to the Foreman-Weiss' series of paper in order to obtain the same anti-classification result for real-analytic diffeomorphisms of $\T^2$. In this connection, we also intend to give a survey of their impressive work.

In their proof of the anti-classification result for ergodic measure-preserving transformations in \cite{FRW} Foreman, Rudolph and Weiss construct a continuous function from the space $\Trees$ to the invertible measure-preserving transformations assigning to each tree $\mathcal{T}$ a transformation $T=F(\mathcal{T})$ of finite entropy such that $T \cong T^{-1}$ just in case $\mathcal{T}$ has an infinite branch (see Section \ref{subsec:FRWmap}). These transformations have an odometer as a factor and, hence, are \emph{Odometer-based Systems} in up-to-date terminology (see Section \ref{subsec:odom}). Since it is a persistent open problem to find a smooth realization of transformations with an odometer-factor, Foreman and Weiss circumvent that obstacle by showing that the collection of Odometer-based Systems has the same global structure with respect to joinings as another collection of transformations, the so-called \emph{Circular Systems} that are extensions of particular circle rotations (see Section \ref{subsec:circular} for a precise description). For this purpose, they show in \cite{FW2} that there is a functor $\mathcal{F}$ between these classes that takes synchronous joinings to synchronous joinings, anti-synchronous joinings to anti-synchronous joinings as well as synchronous and anti-synchronous isomorphisms to synchronous and anti-synchronous isomorphisms (see Section \ref{subsec:catfunc} for an explanation of the terms synchronous and anti-synchronous).

The definition of these circular systems is inspired by a symbolic representation for circle rotations by certain Liouville rotation numbers found in \cite{FW1}. Then Foreman and Weiss showed that these circular systems can be realized as volume preserving ergodic $C^{\infty}$-diffeomorphisms on a torus or a disk or an annulus (under some assumptions on the circular coefficients).

To obtain these smooth realizations the so-called untwisted \emph{AbC method} is used. In their seminal paper \cite{AK} D.V. Anosov and A. Katok introduced this method for constructing examples of $C^{\infty}$-diffeomorphisms. The construction can be carried out on any smooth compact connected manifold that admits an effective circle action $\mathcal{R}=\{R_t\}_{t\in \S^1}$. It involves inductively constructing measure preserving diffeomorphisms $T_n=H_n\circ  R^{\a_n}\circ H_n^{-1}$ where the diffeomorphism $H_n$ is obtained as the compositions $H_n=H_{n-1}\circ h_n$ and the rational number $\a_n$ is chosen close enough to $\a_{n-1}$ to guarantee convergence of the sequence $T_n$ to a diffeomorphism $T$. Usually we want $T$ to satisfy some dynamical property like weak-mixing, minimality, unique ergodicity, etc. and this is achieved by constructing $h_n$ at the $n$-th step in a way so that $T_n$ satisfies some finite version of the targeted property.  

Such constructions have resulted in several interesting results in the $C^{\infty}$-world. Beyond smoothness, the next natural question is the setting of real-analytic realizations. However, there are fewer success stories for real analytic diffeomorphisms due to certain well documented difficulties (see e.g. \cite[section 7.1]{FK} as well as \cite[Section 6.3]{Ba-Ku}). In recent years new papers have appeared which demonstrate that such constructions are possible with enough flexibility on the torus (\cite{S}, \cite{Ba-Ns}, \cite{Kana}, \cite{Ba-Ku}). A modified version of the construction also shows potential for real-analytic constructions beyond the torus (\cite{FK-ue}).

In the case of tori $\T^d$, $d \geq 2$, the concept of \emph{block-slide type of maps} introduced in \cite{Ba-Ns} and their sufficiently precise approximation by volume preserving real-analytic diffeomorphisms allows to find real-analytic counterparts of several Anosov-Katok constructions (see \cite{Ba-Ku}). This approach is the important mechanism in the constructions of this paper as well (we emphasize that all constructions in this article are done on the torus and that real-analytic AbC constructions on arbitrary real-analytic manifolds continue to remain an intractable problem). Hereby, we get our real-analytic counterpart of the main result in \cite{FW1} in Theorem \ref{theorem 57} that we already advertised in a vague manner as Theorem \ref{theo:loosely realization}. Also, unlike Foreman-Weiss, we use $\T^2$ as our manifold and also as our abstract measure space. This is done to reduce notational complexity.

\section{Preliminaries}

Here we introduce the basic concepts and establish notations that we will use for the rest of this article. We note that section \ref{section mpt} and \ref{section pp} are standard theory presented from \cite[sections 3.1 and 5]{FW1} and hence we skip all proofs. Section \ref{section real-analytic} is presented with complete proofs since the theory is somewhat rare. However one can find similar or identical exposition in \cite{S},\cite{FS},\cite{Ba-Ns} and \cite{Ba-Ku}.

\subsection{Basics in Ergodic Theory} \label{section mpt}


We give an concise survey of some concepts regarding measure spaces and measure preserving transformations. Our objective here is not to be comprehensive but rather to introduce some well known concepts in the context of our article and establish certain notations.

Let $\mathbb{X}$ be a set, $\mathcal{B}$ a Boolean algebra of measurable sets on $\mathbb{X}$ and $m$ a measure. Then if the triplet $(\mathbb{X},\mathcal{B},m)$ is a separable non-atomic probability space, we call it a \emph{standard measure space}. Let $(\mathbb{X},\mathcal{B},m)$ and $(\mathbb{X}',\mathcal{B}',m')$ be two measure space. Then a map $f$ defined on a set of full $m$ measure of $\mathbb{X}$ onto a set of full $m'$ measure of $\mathbb{X}'$ is called a \emph{measure theoretic isomorphism} if $f$ is one-one and both $f$ and $f^{-1}$ are measurable.

An \emph{ordered countable partition} or simply a  \emph{partition}
of $(\mathbb{X},\mathcal{B},m)$ will refer to a sequence $\mathcal{P}:=\{P_n\}_{n=1}^\infty$ such that the following conditions are satisfied:
\begin{enumerate}
\item $P_n\in\mathcal{B}$ for all $n\in\N$.
\item $P_n\cap P_m=\emptyset$ if $n\neq m$.
\item $m(\cup_{n=1}^\infty P_n)=1$.
\end{enumerate}
Each $P_n$ is called an \emph{atom} of the partition $\mathcal{P}$. Often we deal with partitions where all but finitely many atoms have measure zero and we refer to such a partition as a \emph{finite partition}. We say that a sequence of partitions $\{\mathcal{P}_n\}_{n=1}^\infty$ is a \emph{generating} sequence if the smallest $\sigma$-algebra containing $\cup_{n=1}^\infty\mathcal{P}_n$ is $\mathcal{B}$.

Next we introduce the notion of a distance between two partitions. Let $\mathcal{P}=\{P_n\}_{n=1}^\infty$ and $\mathcal{Q}=\{Q_n\}_{n=1}^\infty$ be two partitions, we define 
\begin{align}
D_m(\mathcal{P},\mathcal{Q}):=\sum_{i=1}^\infty m(P_i\triangle Q_i)
\end{align}

\begin{lemma} \label{lemma isomorphism}
Fix a sequence $\{\e_n\}_{n=1}^\infty$ such that $\sum_{n=1}^\infty\e_n<\infty$. Let $(\mathbb{X},\mathcal{B},m)$ and $(\mathbb{X}',\mathcal{B}',m')$ be two standard measure spaces and $\{T_n\}_{n=1}^\infty$ and $\{T_n'\}_{n=1}^\infty$ be measure preserving transformations of $\mathbb{X}$ and $\mathbb{X}'$ converging weakly \footnote{ The set of all measure preserving transformation on $\mathbb{X}$ has a natural topology where a basic open set is given by the sets $N(T,\mathcal{P},\e)\coloneqq\{S:$ $S$ is a measure preserving transformation and $\sum_{A\in\mathcal{P}}m(TA\triangle SA)<\e\}$ for a measure preserving transformation $T$, a finite measurable partition $\mathcal{P}$ and some $\e>0$. This is called the weak topology.} to $T$ and $T'$ respectively. Suppose $\{\mathcal{P}_n\}_{n=1}^\infty$ is a decreasing sequence of partitions and $\{K_n\}_{n=1}^\infty$ is s sequence of measure preserving transformations such that 
\begin{enumerate}
\item $K_n:\mathbb{X}\to\mathbb{X}'$ is an isomorphism between $T_n$ and $T_n'$.
\item $\{\mathcal{P}_n\}_{n=1}^\infty$ and $\{K_n(\mathcal{P}_n)\}_{n=1}^\infty$ are generating sequence of partitions for $\mathbb{X}$ and $\mathbb{X}'$.
\item $D_m(K_{n+1}(\mathcal{P}_n),K_n(\mathcal{P}_n))<\e_n$. 
\end{enumerate}  
Then the sequence $K_n$ converges in the weak topology to a measure theoretic isomorphism between $T$ and $T'$.
\end{lemma}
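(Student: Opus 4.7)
The strategy is to produce the limit $K$ on the measure algebra first on the atoms of each $\mathcal{P}_N$ by a Cauchy argument, extend it to a Boolean $\sigma$-algebra isomorphism $\mathcal{B}\to\mathcal{B}'$, promote this to a measure theoretic isomorphism using that the spaces are standard, and finally verify the intertwining $K\circ T=T'\circ K$ by passing to the limit in the relations $K_n\circ T_n=T_n'\circ K_n$. Fixing $N$ and an atom $P\in\mathcal{P}_N$, the fact that $\{\mathcal{P}_n\}$ is decreasing makes $P$ a disjoint union of atoms of $\mathcal{P}_n$ for each $n\geq N$, so condition (3) gives $m'(K_{n+1}(P)\triangle K_n(P))\leq D_{m'}(K_{n+1}(\mathcal{P}_n),K_n(\mathcal{P}_n))<\e_n$. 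Summability of $\{\e_n\}$ makes $\{K_n(P)\}_{n\geq N}$ Cauchy in the measure algebra of $(\mathbb{Y},\mathcal{B}',m')$; denote its limit $K(P)$. Disjointness and total mass pass to the $L^1$-limit, so $K$ extends canonically to a Boolean $\sigma$-homomorphism, and because $\{\mathcal{P}_N\}$ generates $\mathcal{B}$ while the $K_n(\mathcal{P}_n)$ generate $\mathcal{B}'$ and approximate $K(\mathcal{P}_N)$ in measure, the map is also surjective on the measure algebras. Standard theory for standard measure spaces then yields a point realization, again denoted $K$.

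Weak convergence $K_n\to K$ is immediate on atoms of any $\mathcal{P}_N$ by construction: $m'(K_n P\triangle K P)\to 0$. For an arbitrary finite partition $\mathcal{Q}$ I would approximate each atom in $L^1$ by a $\mathcal{P}_N$-measurable set for $N$ large, and transfer the approximation to $\mathbb{Y}$ using the isometry property $m'(K_n A\triangle K_n A')=m(A\triangle A')$ and likewise for $K$.

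For the intertwining, I substitute $K_n T_n P=T_n'K_n P$ into the telescoping estimate
\[
m'(KTP\triangle T'KP)\leq m'(KTP\triangle K_n TP)+m(TP\triangle T_n P)+m'(T_n'K_n P\triangle T'KP),
\]
obtained by inserting $K_n T P$ and $K_n T_n P$; the last summand is further dominated by $2\,m'(K_n P\triangle K P)+m'(T_n'KP\triangle T'KP)$ using intermediate insertions $T_n'K P$ and $T'K_n P$ together with the measure-preserving property of $T'$ and $T_n'$. All four resulting terms tend to zero as $n\to\infty$ by the weak convergences already established, so $KT=T'K$ on a generating algebra and hence everywhere.

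The main obstacle I foresee is not any single step but the bookkeeping in the composition argument: weak convergence of measure preserving maps is not preserved under composition in general, and the proof uses crucially that every $K_n$ is an isometry of the measure algebra, reducing $m'(K_n A\triangle K_n B)$ to $m(A\triangle B)$. This is what permits the varying-set convergences $K_n(TP)\to K(TP)$ and $T_n'(K_n P)\to T'(KP)$ to be controlled, and it is where the hypothesis that each $K_n$ is a measure preserving isomorphism (not merely a measurable map) is essential.
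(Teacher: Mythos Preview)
The paper does not actually prove this lemma: in the preamble to Section~\ref{section mpt} the author writes that the material is ``standard theory presented from \cite[sections 3.1 and 5]{FW1} and hence we skip all proofs.'' So there is no argument in the paper to compare yours against.

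Your proposal is a correct proof. The key points are all present: the Cauchy estimate $m'(K_{n+1}P\triangle K_nP)\le D_{m'}(K_{n+1}\mathcal{P}_n,K_n\mathcal{P}_n)<\e_n$ for $P\in\mathcal{P}_N$ and $n\ge N$ (using that $P$ is a union of $\mathcal{P}_n$-atoms), the extension of the limit to a measure-algebra isomorphism, the von~Neumann point realization for standard spaces, and the telescoping intertwining estimate exploiting that each $K_n$ is a measure-algebra isometry. The one place where you are a bit quick is surjectivity of $K$ on the measure algebras: it follows because the image of $K$ is dense (every $B\in\mathcal{B}'$ is approximated by some $K_n(A')$ with $A'$ a $\mathcal{P}_n$-set, and $m'(K(A')\triangle K_n(A'))\le\sum_{j\ge n}\e_j$), and an isometry between complete metric spaces has closed image. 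With that remark added, the argument is complete.
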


\subsection{Periodic processes} \label{section pp}

Here we recall some relevant facts from the notion of periodic processes. One can refer to \cite{Ka03} for a detailed account. We continue with notations from the previous subsection. 

Let $\mathcal{P}$ be a partition of $(\mathbb{X},\mathcal{B},m)$ where all the atoms of $\mathcal{P}$ have the same measure. A \emph{Periodic process} is a pair $(\tau,\mathcal{P})$ where $\tau$ is a permutation of $\mathcal{P}$ such that each cycle has equal length \footnote{ This is not a necessary requirement for the definition, but is good enough for us.}. We refer to these cycles as \emph{towers} and their length is called height of the tower. We also choose an atom from each tower arbitrarily and call it the base of the tower. In particular if $\mathcal{P}_1, \ldots,\mathcal{P}_s$ are the towers (of height $q$) of this periodic process with $B_1,\ldots,B_s$ as their respective bases, then any tower $\mathcal{P}_i$ can be explicitly written as $B_i,\tau(B_i),\ldots,\tau^{q-1}(B_i)$. We refer to $\tau^k(B_i)$ as the $k$-th level of the tower $\mathcal{P}_i$. $\tau^{q-1}(B_i)$ is the top level. Next we describe how to compare two periodic processes.

\begin{definition}[$\e$-approximation]
Let $(\tau,\mathcal{P})$ and $(\sigma,\mathcal{Q})$ be two periodic processes of the measure space $(\mathbb{X},\mathcal{B},m)$. We say that $(\sigma,\mathcal{Q})$ $\e$-approximates $(\tau,\mathcal{P})$ if there exists disjoint collections of $\mathcal{Q}$ atoms $\{S_A: A\in\mathcal{P}, S_A\subset \mathcal{Q}\}$. and a set $D\subset\mathbb{X}$ of measure less than $\e$ such that the following are satisfied:
\begin{enumerate}
\item For every $A\in\mathcal{P}$, we have $\cup\{B:B\in S_A\}\setminus D\subset A$.
\item If $A\in\mathcal{P}$ is not the top level of a tower and $B\in S_A$, we have $\sigma(B)\setminus D\subseteq \tau(A)$
\item For each tower of $\sigma$, the measure of the intersection of $\mathbb{X}\setminus D$ with each level of this tower are the same. 
\end{enumerate} 
\end{definition} 

With the above definition in mind, we have the following result regarding the convergence of periodic processes.


\begin{lemma}
Let $\{\e_n\}_{n\in \N}$ be a summable sequence of positive numbers. Let $\{(\tau_n,\mathcal{P}_n)\}_{n\in \N}$ be a sequence of periodic processes on $(\mathbb{X},\mathcal{B},m)$ such that $(\tau_{n+1},\mathcal{P}_{n+1})$ $\e_n$-approximates $(\tau_{n},\mathcal{P}_{n})$ and the sequence $\{\mathcal{P}_n\}_{n\in\N}$ is generating. 

Then there exists a unique transformation $T:\mathbb{X}\to\mathbb{X}$ satisfying:
\begin{align}
\lim_{n\to\infty} m(\bigcup_{A\in\mathcal{P}_n}(\tau_nA\triangle TA))=0
\end{align}
We call $\{(\tau_n,\mathcal{P}_n)\}$ a \emph{convergent sequence of periodic processes}.
\end{lemma}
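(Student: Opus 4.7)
The plan is to realize each periodic process $(\tau_n, \mathcal P_n)$ as a measure-preserving transformation $\tilde\tau_n : \mathbb X \to \mathbb X$ by choosing, for every atom $A \in \mathcal P_n$, a measure-preserving bijection between $A$ and $\tau_n A$. I will arrange these choices inductively so that $\tilde\tau_{n+1}$ coincides with $\tilde\tau_n$ off a set $E_n$ of measure $O(\e_n)$, and then extract $T$ as the pointwise almost-everywhere limit via Borel--Cantelli.

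At the passage from $n$ to $n+1$, the $\e_n$-approximation supplies an exceptional set $D_n$ with $m(D_n) < \e_n$ and a disjoint family $\{S_A : A \in \mathcal P_n\}$ of $\mathcal P_{n+1}$-atoms. Condition (1) gives $\bigcup_{B \in S_A} B \subset A \cup D_n$, and condition (2) gives the image compatibility $\tau_{n+1}(B) \subset \tau_n(A) \cup D_n$ for every $B \in S_A$ whenever $A$ is not a top level of its tower. The equal-measure property (3) on the levels of each $\mathcal P_{n+1}$-tower provides the measure-balancing needed to redefine $\tilde\tau_{n+1}$ on $B \setminus D_n$ to literally equal the restriction of $\tilde\tau_n|_A$ to the same set. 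On the remaining portion of $\mathbb X$---namely $D_n$, the $\mathcal P_{n+1}$-atoms not assigned to any $S_A$ (which already lie in $D_n$), and those atoms in $S_A$ for $A$ a top level of a $\mathcal P_n$-tower---I complete $\tilde\tau_{n+1}$ arbitrarily as a measure-preserving map. The combinatorial structure of the approximation, namely the way $\mathcal P_{n+1}$-towers traverse the $\mathcal P_n$-towers, bounds the aggregate measure of these remaining pieces in terms of $\e_n$, giving $m(E_n) \le C \e_n$ for an absolute constant $C$.

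Since $\sum_n m(E_n) < \infty$, Borel--Cantelli ensures that almost every $x$ lies in only finitely many $E_n$, whence $\tilde\tau_n(x)$ is eventually constant; define $T(x)$ to be its eventual value. Applied to the inverse sequence $\{\tilde\tau_n^{-1}\}$, the same argument shows $T$ is invertible and measure-preserving. For every $n$, $m\{x : T(x) \neq \tilde\tau_n(x)\} \le \sum_{k \ge n} m(E_k) \to 0$, and since $\tau_n A = \tilde\tau_n A$ as subsets of $\mathbb X$, this yields
\[
m\Bigl(\bigcup_{A \in \mathcal P_n}(\tau_n A \triangle TA)\Bigr) \le 2 m\{x : T(x) \neq \tilde\tau_n(x)\} \to 0,
\]
which is exactly the claim. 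Uniqueness of $T$ follows from the generating property of $\{\mathcal P_n\}$: any competitor $T'$ satisfying the limit obeys $m(TA \triangle T'A) = 0$ for every $A$ in the algebra $\bigcup_n \mathcal P_n$, which generates $\mathcal B$, forcing $T = T'$ almost everywhere. The principal obstacle I anticipate is the coherent in-atom matching used in the inductive step, and in particular the treatment of top levels of the $\mathcal P_n$-towers, where condition (2) is silent; here one must exploit the uniformity encoded in condition (3) together with the tower-traversal pattern of the approximation to keep the error contribution from these atoms within $O(\e_n)$.
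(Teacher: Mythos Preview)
The paper does not prove this lemma; the section containing it explicitly states that the material is quoted from \cite[section~5]{FW1} with all proofs omitted. So there is no in-paper argument to compare against, and I can only assess your outline on its own.

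Your forward induction has a real gap. You need $\tilde\tau_{n+1}$ simultaneously to realize $\tau_{n+1}$ exactly (so that $\tilde\tau_{n+1}B=\tau_{n+1}B$ for each $B\in\mathcal P_{n+1}$; you use this at the end via ``$\tau_nA=\tilde\tau_nA$'') and to agree with $\tilde\tau_n$ on $B\setminus D_n$. But $\tilde\tau_n$ was fixed one stage earlier with no knowledge of $\mathcal P_{n+1}$: for $B\in S_A$ with $A$ not a top level you only know $\tilde\tau_n(B\setminus D_n)\subset\tau_nA$, and $\tau_nA$ generally contains many $\mathcal P_{n+1}$-atoms, of which $\tau_{n+1}B$ is only one. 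Nothing forces $\tilde\tau_n$ to carry $B$ into that particular sub-atom. Concretely: refine a height-$2$ tower $A_0\to A_1$ to a height-$4$ tower $B_0\to B_1\to B_2\to B_3$ with $A_0=B_0\cup B_2$, $A_1=B_1\cup B_3$, $D_n=\emptyset$; if at stage $n$ you happened to pick $\tilde\tau_n$ with $\tilde\tau_n(B_0)=B_3$, then your two requirements conflict on all of $B_0$. Condition~(3) guarantees that \emph{some} measure-preserving bijection $B\to\tau_{n+1}B$ exists, not that the already-fixed $\tilde\tau_n|_B$ is one. A secondary point you flag yourself: the top levels contribute mass $1/q_n$ to $E_n$, and the hypotheses do not give $\sum 1/q_n<\infty$, so Borel--Cantelli is not available from summability of $\e_n$ alone. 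The usual way around both issues is to work at the level of the measure algebra---show that for each $A$ in the algebra generated by $\bigcup_n\mathcal P_n$ the pushforwards under successive $\tau_m$ form a Cauchy sequence in the metric $m(\cdot\triangle\cdot)$---and then realize the limiting automorphism as a point map.
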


\begin{lemma}
Let $\{(\tau_n,\mathcal{P}_n)\}_{n\in\N}$ be a sequence of periodic processes converging to $T$ and $\{T_n\}_{n\in\N}$ be a sequence of measure preserving transformations satisfying for each $n$, 
\begin{align}
\sum_{A\in\mathcal{P}_n}\mu(T_n A\triangle\tau_n A)<\e_n 
\end{align} 
Then $\{T_n\}_{n\in\N}$ converges weakly to $T$.
\end{lemma}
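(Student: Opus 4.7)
The plan is to verify the defining condition of weak convergence: for every finite measurable partition $\mathcal{Q}=\{Q_1,\ldots,Q_\ell\}$ and every $\delta>0$, I will show that $\sum_{j=1}^\ell m(T_n Q_j\triangle T Q_j)<\delta$ for all sufficiently large $n$. The strategy is the triangle inequality $T_n Q\triangle T Q\subset(T_n Q\triangle\tau_n Q)\cup(\tau_n Q\triangle T Q)$, first applied on sets $B$ that are unions of atoms of $\mathcal{P}_n$, and then extended to arbitrary measurable sets using that $\{\mathcal{P}_n\}$ is a generating sequence.

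For $B=\bigcup_{A\in\mathcal{S}}A$ with $\mathcal{S}\subset\mathcal{P}_n$, the hypothesis and subadditivity give $m(T_n B\triangle\tau_n B)\le\sum_{A\in\mathcal{S}}m(T_n A\triangle\tau_n A)<\varepsilon_n$. For the other term, the key observation is that because $\{\tau_n A\}_{A\in\mathcal{P}_n}$ and $\{TA\}_{A\in\mathcal{P}_n}$ are each pairwise disjoint families, so are $\{\tau_n A\setminus TA\}_A$ and $\{TA\setminus\tau_n A\}_A$, yielding
\begin{align*}
\sum_{A\in\mathcal{P}_n}m(\tau_n A\triangle T A)\le 2\,m\Bigl(\bigcup_{A\in\mathcal{P}_n}(\tau_n A\triangle T A)\Bigr)\longrightarrow 0
\end{align*}
by the convergence of the periodic processes. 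Hence $m(\tau_n B\triangle T B)\to 0$ uniformly in $\mathcal{S}$, and combining with the previous estimate we conclude $m(T_n B\triangle T B)\to 0$ for any union $B$ of atoms of $\mathcal{P}_n$.

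To pass to an arbitrary partition $\mathcal{Q}$, I invoke the almost refinement built into the $\varepsilon$-approximation (clause (1) of its definition): off a set of measure less than $\varepsilon_n$, $\mathcal{P}_{n+1}$ refines $\mathcal{P}_n$. Fix $N_0$ so that $\sum_{k\ge N_0}\varepsilon_k<\delta/(6\ell)$, and, using the generating property together with iterated almost refinement, choose for each $j$ a union $B_j$ of atoms of $\mathcal{P}_{N_0}$ with $m(Q_j\triangle B_j)<\delta/(6\ell)$. Iterating the almost refinement once more yields, for each $n>N_0$, a union $B_j^{(n)}$ of atoms of $\mathcal{P}_n$ with $m(Q_j\triangle B_j^{(n)})<\delta/(3\ell)$. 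Since $T_n$ and $T$ preserve $m$, the triangle inequality reduces the required estimate to controlling $m(T_n B_j^{(n)}\triangle T B_j^{(n)})$, which tends to $0$ by the previous paragraph; summing over $j$ and taking $n$ large completes the verification.

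The main technical obstacle is this last step: because the partitions $\{\mathcal{P}_n\}$ are not assumed to be nested, a fixed measurable set must be approximated by unions of $\mathcal{P}_n$-atoms uniformly in $n$. This uniformity is supplied precisely by the almost refinement clauses of the $\varepsilon$-approximation together with the summability of $\{\varepsilon_n\}$.
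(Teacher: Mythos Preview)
The paper explicitly skips the proof of this lemma, citing it as standard material from \cite{FW1}, so there is no argument in the paper to compare against. Your overall architecture is the natural one and your treatment of unions of $\mathcal{P}_n$-atoms is clean: the observation that the families $\{\tau_n A\setminus TA\}_A$ and $\{TA\setminus\tau_n A\}_A$ are each disjoint, giving $\sum_A m(\tau_n A\triangle TA)\le 2\,m\bigl(\bigcup_A(\tau_n A\triangle TA)\bigr)\to 0$, is exactly right.

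The gap is in the passage to arbitrary measurable sets. You write ``using the generating property together with iterated almost refinement, choose for each $j$ a union $B_j$ of atoms of $\mathcal{P}_{N_0}$ with $m(Q_j\triangle B_j)<\delta/(6\ell)$,'' but this step is not justified. The generating hypothesis only guarantees approximation of $Q_j$ by unions of atoms of a finite \emph{join} $\bigvee_{k\le M}\mathcal{P}_k$; passing from the join to a single $\mathcal{F}_M$ via almost-refinement costs $m(D_1\cup\cdots\cup D_{M-1})$, which is bounded by $\sum_{k\ge 1}\varepsilon_k$ rather than by the tail $\sum_{k\ge N_0}\varepsilon_k$. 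In other words, you have not shown that $d_n(Q):=\inf_{B\in\sigma(\mathcal{P}_n)}m(Q\triangle B)\to 0$, and that is precisely what the rest of your argument consumes. One correct route is to first prove the auxiliary inequality $d_{n+1}(B)\le d_n(B)+\varepsilon_n$ (immediate from clause~(1) of the $\varepsilon$-approximation), so that $\liminf_n d_n(B)=0$ already forces $d_n(B)\to 0$; then show $\liminf_n d_n(B)=0$ for every $B$ by checking that $\{B:\liminf_n d_n(B)=0\}$ is closed in the measure algebra and contains a dense set. Establishing the latter density is where the real work is (one uses that on $G_K=X\setminus\bigcup_{m\ge K}D_m$ the tail $\{\mathcal{P}_n\}_{n\ge K}$ is genuinely nested, together with the equal-measure-atom property of periodic processes to force $|\mathcal{P}_n|\to\infty$), and it deserves at least a sentence explaining which structural feature makes it go through. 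Your final paragraph shows you are aware this is the crux, but the argument as written does not close it.
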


\begin{lemma}
Let $\{\e_n\}_{n\in \N}$ be a summable sequence of positive numbers. Let $(\mathbb{X}_1,\mathcal{B}_1, m_1)$ and $(\mathbb{X}_2,\mathcal{B}_2, m_2)$ be two standard measure spaces. Let $\{T_n\}_{n\in\N}$ and $\{S_n\}_{n\in\N}$ be two sequences of measure preserving transformations of $\mathbb{X}_1$ and $\mathbb{X}_2$ converging to $T$ and $S$ respectively in the weak topology. Suppose $\mathcal{P}_n$ is a decreasing sequence of partitions and $\{\phi_n\}_{n\in\N}$ be a sequence of measure preserving transformations such that
\begin{enumerate}
\item $\phi_n$ is a measure theoretic isomorphism between $T_n$ and $S_n$. 
\item The sequence $\mathcal{P}_n$ and $\phi_n(\mathcal{P}_n)$ generate $\mathcal{B}_1$ and $\mathcal{B}_2$.
\item $D_{m_2}(\phi_{n+1}(\mathcal{P}_n),\phi_n(\mathcal{P}_n))<\e_n$
\end{enumerate}
Then the sequence $\phi_n$ converges in the weak topology to an isomorphism between $T$ and $S$.
\end{lemma}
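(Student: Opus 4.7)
The plan is to mimic the argument for Lemma \ref{lemma isomorphism}, with the conjugacies $\phi_n$ playing the role of the earlier $K_n$. Three steps need to be carried out: produce a limit map $\phi$, verify it is a measure-theoretic isomorphism, and check that it intertwines $T$ and $S$.

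First I would extract the limit. Fixing $n_0$ and an atom $A\in\mathcal{P}_{n_0}$, the fact that $\{\mathcal{P}_n\}$ is decreasing (so $\mathcal{P}_n$ refines $\mathcal{P}_{n_0}$ for $n\geq n_0$) lets me write $A$ as a disjoint union of atoms of $\mathcal{P}_n$; condition (3) then gives
\[
m_2\bigl(\phi_{n+1}(A)\triangle\phi_n(A)\bigr)\leq D_{m_2}\bigl(\phi_{n+1}(\mathcal{P}_n),\phi_n(\mathcal{P}_n)\bigr)<\e_n.
\]
Summability of $\{\e_n\}$ makes $\{\mathbf{1}_{\phi_n(A)}\}_n$ Cauchy in $L^1(m_2)$, so a measurable set $\phi(A)\subseteq\mathbb{X}_2$ arises as its limit. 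Passing to the limit preserves measure and disjointness, so $\phi$ extends to a measure-preserving Boolean homomorphism on the algebra $\mathcal{A}=\bigcup_n\mathcal{P}_n$ and hence to all of $\mathcal{B}_1$.

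Next I would upgrade $\phi$ to an honest isomorphism. Since $m_2(\phi(A)\triangle\phi_k(A))\leq\sum_{j\geq k}\e_j$ for $A\in\mathcal{P}_k$, the image algebra $\phi(\mathcal{A})$ is arbitrarily close in symmetric difference to the generating family $\{\phi_n(\mathcal{P}_n)\}$, and is therefore dense in $\mathcal{B}_2$. Combined with measure preservation, this makes $\phi$ a measure-algebra isomorphism, which on standard measure spaces is realized (up to a null set) by a point isomorphism that I continue to call $\phi$. For the conjugacy, I start from the identity $\phi_n T_n A=S_n\phi_n A$ supplied by (1) for $A\in\mathcal{A}$ and pass to the limit on both sides: $T_nA\to TA$ in symmetric difference because $T_n\to T$ weakly and $A$ is an atom of a finite partition, $\phi_n(B)\to\phi(B)$ in measure for $B\in\mathcal{A}$ by the first step, and symmetrically for the right-hand side using $\phi^{-1}$ and $S_n\to S$. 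This yields $\phi TA=S\phi A$ for all $A\in\mathcal{A}$ and hence almost everywhere. Weak convergence $\phi_n\to\phi$ is then immediate from $L^1$-convergence on the generating sequence $\mathcal{P}_n$.

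The delicate point is the limit exchange in the final step: one must combine three simultaneous weak limits ($T_n\to T$, $\phi_n\to\phi$ pointwise on atoms, $S_n\to S$) without any strong continuity of composition. The standard fix is to approximate $TA$ and $SA$ by finite unions of atoms from $\mathcal{P}_n$ and $\phi_n(\mathcal{P}_n)$ respectively, and then propagate the error through the triangle inequality using conditions (1)--(3) and the definition of the weak topology. Everything else in the proof is a bookkeeping exercise that mirrors the proof of Lemma \ref{lemma isomorphism}.
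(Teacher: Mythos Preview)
The paper does not actually prove this lemma: as announced at the start of Section~2, the results of \S\ref{section mpt} and \S\ref{section pp} are quoted from \cite[Sections~3.1 and~5]{FW1} with all proofs omitted. (Indeed, the lemma in question is just Lemma~\ref{lemma isomorphism} restated with $\phi_n$ in place of $K_n$ and $S$ in place of $T'$.) So there is no in-paper proof to compare against.

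Your outline is the standard argument and is essentially correct. One comment on the step you flag as ``delicate'': it is cleaner than you make it sound. Once you have $L^1$-convergence $\phi_n(A)\to\phi(A)$ for every $A$ in the generating algebra $\mathcal{A}=\bigcup_n\sigma(\mathcal{P}_n)$, a $3\varepsilon$ argument (approximate an arbitrary $B\in\mathcal{B}_1$ by some $B'\in\mathcal{A}$ and use that each $\phi_n$ and $\phi$ is measure preserving) immediately upgrades this to weak convergence $\phi_n\to\phi$ on all of $\mathcal{B}_1$. With that in hand, the intertwining is a straight triangle inequality:
\[
m_2\bigl(\phi_n(T_nA)\triangle\phi(TA)\bigr)\le m_1(T_nA\triangle TA)+m_2\bigl(\phi_n(TA)\triangle\phi(TA)\bigr)\to 0,
\]
and symmetrically for $S_n\phi_n A\to S\phi A$. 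No separate approximation of $TA$ by unions of atoms is needed.
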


\subsection{Real-analytic diffeomorphisms on the torus} \label{section real-analytic}

We will denote the two dimensional torus by 
\begin{align}
\T^2:=\R^2/\Z^2
\end{align}
We use $\mu$ to denote the standard Lebesgue measure on $\T^2$.

We give a description of the space of measure preserving real-analytic diffeomorphisms on $\T^2$ that are homotopic to the identity. 

Any real-analytic diffeomorphism on $\T^2$ homotopic to the identity admits a lift to a map from $\R^2$ to $\R^2$ and has  the following form 
\begin{align} 
F(x_1, x_2)=(x_1+f_1(x_1, x_2), x_2+f_2(x_1,x_2))
\end{align}
where $f_i:\R^2\to \R$ are $\Z^2$-periodic real-analytic functions.  Any real-analytic $\Z^2$ periodic function on $\R^2$ can be extended as a holomorphic (complex analytic) function from some open complex neighborhood \footnote{We identify $\R^2$ inside $\C^2$ via the natural embedding $(x_1, x_2)\mapsto (x_1+i0, x_2+i0)$.} of $\R^2$ in $\C^2$. For a fixed $\rho>0$, we define the neighborhood
\begin{align}
\Omega_\rho:=\{(z_1,z_2)\in\C^2:|\text{Im}(z_1)|<\rho \text{ and  }|\text{Im}(z_2)|<\rho\}
\end{align}
and for a function $f$ defined on this set, put
\begin{align}
 \|f\|_\rho:=\sup_{(z_1,z_2)\in \Omega_\rho}|f((z_1,z_2))|
\end{align}
We define  $C^\omega_\rho(\T^2)$ to be the space of all $\Z^2$-periodic real-analytic functions on $\R^2$ that extends to a holomorphic function on $\Omega_\rho$ and $\|f\|_\rho<\infty$.

We define, $\text{Diff }^\omega_\rho(\T^2,\mu)$ to be the set of all measure preserving real-analytic diffeomorphisms of $\T^2$ homotopic to the identity, whose lift $F$ to $\R^2$ satisfies $f_i\in C^\omega_\rho(\T^2)$ and we also require that the lift $\tilde{F}(x)=(x_1+\tilde{f}_1(x), x_2+\tilde{f}_2(x))$ of its inverse to $\R^2$ to satisfy $\tilde{f}_i\in C^\omega_\rho(\T^2)$.\\
The metric in $\text{Diff }^\omega_\rho(\T^2,\mu)$ is defined by 
\begin{align*}
d_\rho(f,g)=\max\{\tilde{d}_\rho(f,g),\tilde{d}_\rho(f^{-1},g^{-1})\}\qquad\text{where}\qquad\tilde{d}_\rho(f,g)=\max_{i=1,2}\{\inf_{n\in\Z}\|f_i-g_i+n\|_\rho\}
\end{align*}
Next, with some abuse of notation, we define the following two spaces 
\begin{align}
C^\omega_\infty (\T^2)  :=& \cap_{n=1}^\infty C^\omega_n(\T^2) \label{6.789} \\
\text{Diff }^\omega_\infty (\T^2,\mu)  :=  &  \cap_{n=1}^\infty \text{Diff }^\omega_n(\T^2,\mu) \label{4.569}
\end{align}
We now list some properties of the above spaces that are going to be useful to us.
\begin{itemize}
\item Note that the functions in \ref{6.789} can be extended to $\C^2$ as entire functions. 
\item $\text{Diff }^\omega_\infty(\T^2,\mu)$ is closed under composition. To see this assume that $f,g\in \text{Diff }^\omega_\infty(\T^2,\mu)$ and let $F,G$ be their lifts to $\R^2$. Then note that $F\circ G$ is the lift of the composition $f\circ g$ (with $\pi:\R^2\to\T^2$ as the natural projection, $\pi\circ F\circ G=f\circ\pi\circ G=f\circ g\circ \pi$). Now for the complexification of $F$ and $G$ note that the composition $F\circ G(z)=(z_1+g_1(z)+f_1(G(z)), z_2+g_2(z)+f_2(G(z)))$. Since $g_i\in C^\omega_\infty (\T^2) $, we have for any $\rho,$ $\sup_{z\in\Omega_\rho}|\text{Im}(G(z))|\leq \max_i (\sup_{z\in\Omega_\rho}|\text{Im}(z_i)+\text{Im}(g_i(z))|) \leq \max_i (\sup_{z\in\Omega_\rho}|\text{Im}(z_i)|+\sup_{z\in\Omega_\rho}|\text{Im}(g_i(z))|)\leq \rho + \max_i (\sup_{z\in\Omega_\rho}|g_i(z)|)<\rho + const<\rho'<\infty$ for some $\rho'$. So, $\sup_{z\in\Omega_\rho}|g_i(z)+f_i(G(z))|\leq \sup_{z\in\Omega_\rho}|g_i(z)|+\sup_{z\in\Omega_\rho}|f_i(G(z))|<\infty$ since $g_i\in C^\omega_\infty (\T^2)$, $G(z)\in \Omega_{\rho'}$ and $f_i\in C^\omega_{\rho'}(\T^2)$. An identical treatment gives the result for the inverse. 
\item If $\{f_n\}_{n=1}^\infty\subset\text{Diff }^\omega_\rho(\T^2,\mu)$ is Cauchy in the $d_\rho$ metric, then $f_n$ converges to some $f\in \text{Diff }^\omega_\rho(\T^2,\mu)$. Indeed, uniform convergence guarantees analyticity and consideration of the inverses ensures that the result is a diffeomorphism. So this space is a Polish space  \footnote{ A Polish space is a separable completely metrizable topological space}.
\end{itemize}

Similarly we define the space of all real-analytic functions on $\T^2$ and all measure preserving real-analytic diffeomorphisms of $\T^2$ as follows:
\begin{align}
C^\omega(\T^2)  :=& \cup_{n=1}^\infty C^\omega_\frac{1}{n}(\T^2)  \\
\text{Diff }^\omega(\T^2,\mu)  :=  &  \cup_{n=1}^\infty \text{Diff }^\omega_\frac{1}{n}(\T^2,\mu) 
\end{align}

The space of all measure preserving real-analytic diffeomorphisms of $\T^2$ is given the corresponding inductive limit topology and it is not a metrizable space. This space is usually very difficult to work with and we will not use this for the rest of this article.

This completes the description of the analytic topology necessary for our construction. Also throughout this paper, the word ``diffeomorphism" will refer to a real-analytic diffeomorphism unless stated otherwise. Also, the word ``analytic topology" will refer to the topology of $\text{Diff }^\omega_\rho(\T^2,\mu)$ described above. See \cite{S} for a more extensive treatment of these spaces.

\section{Symbolic systems}

In this section we introduce the notion of a symbolic system in a way that is most convenient in representing AbC transformations. In particular we write about the uniform circular systems introduced by Foreman and Weiss. We skip proofs and recall the results only for one can refer to \cite[sections 3.3 and 4]{FW1} for the details. We also note that we try to stick to the notations used in \cite{FW1} but we do make some changes.

\subsection{Symbolic systems}

Let $\Sigma $ be a finite or countable alphabet endowed with the discrete topology. By $\Sigma^\Z$ we denote the space of bi-infinite sequences of alphabets from $\Sigma$ endowed with the product topology. The product topology makes this a totally disconnected separable space that is compact if $\Sigma$ is finite.

In order to get a better description of the of the product topology on this set we define for any $u=\<u_0,u_1,\ldots,u_{n-1}\>\in\Sigma^{<\infty}$,
\begin{align*}
C_k(u)\coloneqq \{f\in\Sigma^Z:f|_{[k,k+n)}=u\}
\end{align*}
Such sets are known as \emph{cylinder sets} and they generate the product topology of $\Sigma^\Z$. Next we define the (left) \emph{shift map}
\begin{align}
\text{sh}:\Sigma^\Z\to\Sigma^\Z\qquad\text{defined by}\qquad \text{sh}(f)(n)=f(n+1)
\end{align}
If $\mu$ is a shift invariant Borel measure then the system $(\Sigma^\Z,\mathcal{B},\mu,\text{sh})$ is called a \emph{symbolic system}. The closure of the support of $\mu $ is a shift invariant measure preserving system and we call it a \emph{symbolic shift} or a \emph{sub shift}. 

We recall that we can construct symbolic shifts from an arbitrary measure preserving transformation $(\mathbb{X},\mathcal{B},\mu,T)$. To see this, fix a partition $\mathcal{P}\coloneqq\{A_i:\; i\in I\}$ for some countable or finite $I$ and an alphabet $\Sigma\coloneqq\{a_i:i\in I\}$. We define $\phi:\mathbb{X}\to\Sigma^\Z$ by $\phi(x)(n) = a_i\iff T^nx\in A_i$. The $\phi^*\mu$ is an invariant measure and $(\Sigma^\Z,\mathcal{C},\phi^*\mu,\text{sh} )$ is a factor of $(\mathbb{X},\mathcal{B},\mu,T)$ with factor map $\phi$. If $\mathcal{P}$ is generating then $\phi$ is an isomorphism.

The above description of symbolic shift and coding is the most straight forward way, but to explicitly understand the resulting symbolic shift conjugate with an AbC transformation we need a step by step inductive procedure for describing symbolic shifts which in certain way emulates the AbC process. So we give an intrinsic definition of  the {symbolic shifts} using the notion of construction sequences. 

\begin{definition}
\label{def:ConstrSeq} A sequence of collection of words $\left(W_{n}\right)_{n\in\mathbb{N}}$,
where $\mathbb{N}=\left\{ 0,1,2,\dots\right\} $, satisfying the following
properties is called a \emph{construction sequence}:
\begin{enumerate}
\item for every $n\in\mathbb{N}$ all words in $W_{n}$ have the same length
$h_{n}$,
\item each $w\in W_{n}$ occurs at least once as a subword of each $w^{\prime}\in W_{n+1}$,
\item there is a summable sequence $\left(\varepsilon_{n}\right)_{n\in\mathbb{N}}$
of positive numbers such that for every $n\in\mathbb{N}$, every word
$w\in W_{n+1}$ can be uniquely parsed into segments $u_{0}w_{1}u_{1}w_{1}\dots w_{l}u_{l+1}$
such that each $w_{i}\in W_{n}$, each $u_{i}$ (called spacer or
boundary) is a word in $\Sigma$ of finite length and for this parsing
\[
\frac{\sum_{i=0}^{l+1}\lvert u_{i}\rvert}{h_{n+1}}<\varepsilon_{n+1}.
\]
\end{enumerate}
\end{definition}

Next we define the following subset of $\Sigma^\Z$:
\begin{align}
\mathbb{K}\coloneqq\{x\in\Sigma^\Z: \text{ if } & x= uwv \text{ for some } u,v\in\Sigma^\Z, w\in\Sigma^{<\infty} \text{ then } w\nonumber\\
&\text{ is a contiguous subword of some }w'\in W_n\text{ for some }n\}
\end{align}
We note that $\mathbb{K}$ is a closed shift invariant subset of $\Sigma^\Z$.

Before we talk about measures on $\mathbb{K}$, we need some technical definitions. We say that a construction sequence $W_n$ is \emph{uniform} if there exists a sequence of functions $\{d_n:W_n\to(0,1)\}_{n=1}^\infty$ such that for some summable sequence of positive numbers $\{\e_n\}_{n=1}^\infty$, we have for any choice of $w\in W_n$ and $w'\in W_{n+1}$,
\begin{align}
\Big|\frac{h_{n}}{h_{n+1}}(\#\{i:w=w_i\text{ where } w'=u_0w_0u_1w_1\ldots u_lw_lu_{l+1}\})-d_n(w)\Big|<\frac{1}{h_n}\e_{n+1}
\end{align}
We say that $\mathbb{K}$ is a \emph{uniform symbolic system} if it is build out of a uniform construction sequence. 

If it so happens that the number $\#\{i:w=w_i\text{ where } w'=u_0w_0u_1w_1\ldots u_lw_lu_{l+1}\}$ depends only on $n$ but not on $w$ or $w'$ then we refer to the construction sequence and $\mathbb{K}$ as \emph{strongly uniform}. Note that strong uniformity implies uniformity with $d_n (w) = \#\{i:w=w_i\text{ where } w'=u_0w_0u_1w_1\ldots u_lw_lu_{l+1}\}(h_n /h_{n+1} )$.

We say that a collection of finite words $\mathcal{W}$ is \emph{uniquely readable} iff whenever $u,v,w\in\mathcal{W}$ and $uv=pws$ then either $p$ or $s$ is the empty word.

Let $\mathbb{K}$ be a uniform symbolic system built out of a construction sequence $W_n$ where each $W_n$ is uniquely readable. We define
\begin{align}
S\coloneqq\{x\in\mathbb{K}:\;\exists\text{ natural number sequences }\{a_m\}_{m=1}^\infty,\; \{b_m\}_{m=1}^\infty\text{ satisfying } x|_{[a_m,b_m)}\in W_m\}
\end{align} 
and note that $S$ is a dense shift invariant $\mathcal{G}_\d$ set.

\begin{lemma}\label{lemma 11}
Let $\mathbb{K}$ be a uniform symbolic system built out of the  construction sequence $\{W_n\}_{n=1}^\infty$ in a finite alphabet $\Sigma$. Then the following holds:
\begin{enumerate}
\item $\mathbb{K}$ is the smallest shift invariant closed subset of $\Sigma^\Z$ such that 
\begin{align}
\mathbb{K}\cap C_0(w)\neq\emptyset\qquad\forall n\in\N\qquad w\in W_n
\end{align}
\item There exists a unique non-atomic shift invariant measure $\nu$ concentrated on S and this measure is ergodic.
\item For any $s\in S\subset \mathbb{K}$ and $w\in W_n$, the density of $\{k:w$ occurs in $s$ starting at $k\}$ exists and is equal to $\nu(C_0(w))$. Moreover $\nu(C_0(w))=d_n(w)/h_n$.
\end{enumerate}
\end{lemma}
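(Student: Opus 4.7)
For Part (1), closure and shift-invariance of $\mathbb{K}$ are immediate from its definition, which restricts only the admissible finite subwords. To see $\mathbb{K}\cap C_0(w)\neq\emptyset$ for $w\in\mathcal{W}_n$, I would iteratively embed $w$ in a chain $w\subset w^{(n+1)}\subset w^{(n+2)}\subset\cdots$ with $w^{(k)}\in\mathcal{W}_k$; compactness of $\Sigma^\Z$ then yields a limit point of $\mathbb{K}$ with $w$ placed at coordinate $0$. Minimality is a standard approximation argument: if $X\subset\Sigma^\Z$ is closed, shift-invariant, and meets every $C_0(w)$, then for $x\in\mathbb{K}$ the window $x|_{[-N,N]}$ is a contiguous subword of some $w\in\mathcal{W}_n$, and shifting any $y\in X\cap C_0(w)$ appropriately produces an element of $X$ in the cylinder $C_{-N}(x|_{[-N,N]})$, so $x\in\overline{X}=X$.

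For Parts (2) and (3), the plan is to first establish limiting frequencies and then use them both to construct $\nu$ and to identify pointwise densities. Given $u\in\Sigma^{<\infty}$ and $w'\in\mathcal{W}_m$ with $m$ large, let $N(u,w')$ count occurrences of $u$ as a contiguous subword of $w'$. Using unique readability of $\mathcal{W}_n$ to parse $w'=u_0w_0u_1w_1\ldots u_lw_lu_{l+1}$ unambiguously with $w_i\in\mathcal{W}_n$ and $\sum_i|u_i|<q_n\e_m$, and iterating the uniformity estimate on $\#\{i:w_i=w\}$ across the scales $n,n+1,\ldots,m-1$, I would show by telescoping that $N(u,w')/q_m$ is Cauchy in $m$ with limit $\alpha(u)$ independent of $w'$, satisfying $\alpha(w)=d_n(w)/q_n$ for $w\in\mathcal{W}_n$. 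Kolmogorov consistency $\sum_{a\in\Sigma}\alpha(au)=\alpha(u)=\sum_{a\in\Sigma}\alpha(ua)$ (verified in the limit) then produces a shift-invariant Borel probability measure $\nu$ on $\mathbb{K}$ with $\nu(C_0(u))=\alpha(u)$.

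For Part (3), given $s\in S$ and $w\in\mathcal{W}_n$, the density of $w$-occurrences inside the block $s|_{[a_m,b_m)}\in\mathcal{W}_m$ equals $N(w,s|_{[a_m,b_m)})/q_m$, which differs from $d_n(w)/q_n$ only by the accumulated telescoping error and an $O(q_n/q_m)$ contribution from boundary occurrences at $a_m,b_m$; letting $m\to\infty$ gives the density $d_n(w)/q_n=\nu(C_0(w))$. This pointwise identity holds for every $s\in S$, so by the Birkhoff ergodic theorem every ergodic component of any shift-invariant measure concentrated on $S$ must agree with $\nu$ on every cylinder, which forces uniqueness of $\nu$ and hence (by the same decomposition argument) its ergodicity. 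Non-atomicity then follows because for any $x\in S$ one has $\nu(\{x\})\leq\nu(C_{a_m}(x|_{[a_m,b_m)}))=\nu(C_0(x|_{[a_m,b_m)}))\leq 1/q_m\to 0$, and any atom of $\nu$ would have to lie in $S$. The chief obstacle is the telescoping estimate itself, where occurrences of $u$ that sit inside the spacer segments $u_i$ or straddle the boundaries between consecutive $\mathcal{W}_n$-blocks must be tracked across all intermediate scales; unique readability makes the parsing unambiguous at each scale, and summability of $\{\e_k\}$ keeps the cumulative boundary error bounded uniformly in $m$, which is precisely what allows the telescoping to close.
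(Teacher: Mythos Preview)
The paper does not prove this lemma at all: Section~3 explicitly states that proofs are skipped and refers the reader to \cite[Sections~3.3 and 4]{FW1} for details. So there is no in-paper argument to compare against; your sketch is a reasonable reconstruction of the standard route (and, in outline, of what one finds in \cite{FW1}): Part~(1) via nested embeddings and a minimality/approximation argument, Parts~(2)--(3) via a telescoping frequency estimate that exploits unique readability and uniformity to produce a consistent family of cylinder masses, followed by Birkhoff to pin down uniqueness and ergodicity.

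Two places where your sketch would need tightening before it constitutes a proof. First, when passing from block frequencies $N(w,s|_{[a_m,b_m)})/q_m$ to the density on all of $\Z$, you implicitly use that $a_m\to-\infty$ and $b_m\to+\infty$; this is not immediate from the definition of $S$ as stated in the paper, and requires an argument (via unique readability and the nesting of $\mathcal{W}_m$-parsings) that the $\mathcal{W}_m$-block containing a fixed coordinate eventually expands in both directions. Second, your Kolmogorov-consistency step is stated for the additivity relations $\sum_a\alpha(au)=\alpha(u)=\sum_a\alpha(ua)$, but you also need $\alpha(\emptyset)=1$ and nonnegativity, and you should check that the boundary/spacer error in the telescoping is uniform over all finite words $u$ of a fixed length (not just over $w\in\mathcal{W}_n$), since the $\sigma$-algebra is generated by arbitrary cylinders. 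These are routine but should be made explicit.
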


\subsection{Odometer-based symbolic systems} \label{subsec:odom}

Let $\left(k_{n}\right)_{n\in\mathbb{N}}$ be a sequence of natural
numbers $k_{n}\geq2$ and 
\[
O=\prod_{n\in\mathbb{N}}\left(\mathbb{Z}/k_{n}\mathbb{Z}\right)
\]
be the $\left(k_{n}\right)_{n\in\mathbb{N}}$-adic integers. Then
$O$ has a compact abelian group structure and hence carries a Haar
measure $\lambda$. We define a transformation $T:O\to O$ to be addition
by $1$ in the $\left(k_{n}\right)_{n\in\mathbb{N}}$-adic integers
(i.e. the map that adds one in $\mathbb{Z}/k_{0}\mathbb{Z}$ and carries
right). Then $T$ is a $\lambda$-preserving invertible transformation
called \emph{odometer transformation} which is ergodic and has discrete
spectrum.

We now define the collection of symbolic systems that have odometer
systems as their timing mechanism to parse typical elements of the
system.
\begin{definition}
Let $\left(\mathtt{W}_{n}\right)_{n\in\mathbb{N}}$ be a uniquely
readable construction sequence with $\mathtt{W}_{0}=\Sigma$ and $\mathtt{W}_{n+1}\subseteq\left(\mathtt{W}_{n}\right)^{k_{n}}$
for every $n\in\mathbb{N}$. The associated symbolic shift will be
called an \emph{odometer-based system}.
\end{definition}

Thus, odometer-based systems are those built from construction sequences
$\left(\mathtt{W}_{n}\right)_{n\in\mathbb{N}}$ such that the words
in $\mathtt{W}_{n+1}$ are concatenations of a fixed number $k_{n}$
of words in $\mathtt{W}_{n}$. Hence, the words in $\mathtt{W}_{n}$
have length $\mathtt{h}_{n}$, where
\[
\mathtt{h}_{n}=\prod_{i=0}^{n-1}k_{i}
\]
if $n>0$, and $\mathtt{h}_{0}=1$. Moreover, the spacers in part
3 of Definition \ref{def:ConstrSeq} are all the empty words (i.e.
an odometer-based transformation can be built by a cut-and-stack construction
using no spacers).

\begin{remark}
According to anannouncement in \cite{FW2}, any finite entropy system that has an odometer factor can be represented as an odometer-based system
\end{remark}

\subsection{Circular symbolic systems} \label{subsec:circular}

We now describe a special class of symbolic shifts called circular symbolic systems. These systems were introduced by Foreman and Weiss in \cite[section 4]{FW1} and are specifically designed to serve as symbolic representations of untwisted AbC systems. We recall the results from their work but skip most proofs.

Consider natural numbers $k,l,p,q$ with $p$ and $q$ mutually prime. We note that for every $i=0,1,\ldots, q-1$, there exists a $0\leq j_i<q$ such that the following holds:
\begin{align}
pj_i=i\mod q
\end{align} 
we often use the notation 
\begin{align}
j_i = (p)^{-1}i\mod q
\end{align}
for brevity. Note that $q-j_i=j_{q-i}$.

In order to describe uniform circular systems, we start by defining an operator on alphabets. Let $\Sigma$ be any alphabet and $\{b,e\}$ be two letter not in $\Sigma$. Suppose $w_0,\ldots, w_{k-1}$ be a sequence of words constructed out of the alphabet $\Sigma\cup\{b,e\}$. We define the operator:
\begin{align} \label{formula C}
\mathcal{C}(w_0,w_1,\ldots, w_{k-1})\coloneqq \prod_{i=0}^{q-1}\prod_{j=0}^{k-1}(b^{j_{q-i}}w_j^{l-1}e^{j_i})
\end{align}
Note that
\begin{itemize}
\item  If $|w_i|=q $ for $i=0,\ldots, k-1$, then $|\mathcal{C}(w_0,\ldots,w_{k-1})|=klq^2$.
\item For every $e\in \mathcal{C}(w_0,\ldots,w_{k-1})$, there exists a $b$ to the left of it.
\item If for some $m>n$, there exists a $b$ at the $n$ th position followed by a $e$ at the $m$ th position and additionally we know that neither occurs inside a $w_i$ then there must exist a $w_i$ in between the $m$ th and the $n$ th position. 
\item The proportion of the word $w$ written in equation \ref{formula C} that belongs to the boundary is $1/l$. Moreover the proportion of the word that is within $q$ letters of the boundary is $3/l$. 
\end{itemize}

We also introduce some notions around this map. Suppose $w=\mathcal{C}(w_0,w_1,\ldots, w_{k-1})$. So $w$ consists of blocks  with $l-1$ copies of $w_i$ along with some $b$ s and $e$ s at the ends which are not inside $w_i$. We refer to the portion of $w$ in $w_i$ s to be the \emph{interior of $w$} and the $b$ s and $e$ s not in the $w_i$ s to be the \emph{boundary of $w$}. In a block of the form $w_i^{l-1}$, the first and the last occurrences of $w_i$ is called the \emph{boundary portion of the block $w_j^{l-1}$} and the other occurrences are called \emph{interior occurrences}.

\begin{lemma}\label{lemma u readable}
Let $\Sigma$ be a finite or countable alphabet and $u_0,\ldots,u_{k-1},v_0,\ldots, v_{k-1},w_0,\ldots, w_{k-1}$ are words of length $q<l/2$ constructed from the alphabet $\Sigma\cup\{b,e\}$. Put $u=\mathcal{C}(u_0,\ldots,u_{k-1}), v=\mathcal{C}(v_0,\ldots,v_{k-1})$ and $w=\mathcal{C}(w_0,\ldots,w_{k-1})$. If for some words $p$ and $s$ constructed from $\Sigma\cup\{b,e\}$ we have
\begin{align*}
uv=pws
\end{align*}
Then either $p=$ $empty$ $word$, $u=w$, $v=s$ or $s=$ $empty$ $word$, $u=p$, $v=w$.
\end{lemma}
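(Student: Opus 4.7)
The plan is to pin down the shift $t := |p|$ of the $w$-copy inside $uv$ using the rigid block structure of circular words. Since $|u| = |v| = |w| = klq^2$, we have $0 \le t \le klq^2$, with $t = 0$ corresponding to the conclusion $p = \emptyset$, $w = u$, $s = v$, and $t = klq^2$ corresponding to $s = \emptyset$, $p = u$, $w = v$. I would assume $0 < t < klq^2$ and derive a contradiction. Recall that each of $u, v, w$ decomposes into $kq$ blocks of length $lq$: the $(i,j)$-block (for $0 \le i < q$, $0 \le j < k$) has the form $b^{q - j_i} X_j^{l-1} e^{j_i}$, with $X_j$ standing for $u_j$, $v_j$, or $w_j$, and with the convention that $j_0 = 0$ while the $i = 0$ blocks carry the full $b^q$ at the start. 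The interior $X_j^{l-1}$ has length $(l-1)q$ and is $q$-periodic; the hypothesis $q < l/2$ gives $(l-1)q > 2q^2 - q$, far exceeding the total boundary length $q$ within any one block.

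The first step, which is the technical heart of the proof, is to show that $t$ must be a multiple of $lq$, so that the blocks of $w$ align with those of $u$ and $v$. For this I would use a Fine--Wilf style periodicity argument: the interior $w_0^{l-1}$ of the first block of $w$ is a word of length $(l-1)q$ whose period divides $q$, and viewed inside $uv$ it must be consistent with the block structure of $u$. If $t$ were not a multiple of $lq$, this window would straddle two consecutive blocks of $u$, and the boundary $e$- and $b$-runs between them would have to be compatible with the periodic pattern of $w_0^{l-1}$ at very specific positions. Carrying out this constraint across all $kq$ interior windows of $w$ simultaneously, and using that the boundary lengths $j_i$ define a bijection $i \mapsto j_i$ on $\{0, \ldots, q-1\}$ (since $p$ is coprime to $q$), forces $t \equiv 0 \pmod{lq}$.

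Having reduced to $t = \alpha \cdot lq$ with $0 < \alpha < kq$, I would rule this out by comparing block indices. Block $m$ of $w$ must equal block $m + \alpha$ of $u$ for $0 \le m < kq - \alpha$ and block $m - (kq - \alpha)$ of $v$ beyond that. Matching the boundary $b$-run length $q - j_{i_m}$ via the bijection $i \mapsto j_i$ forces $i_m = i_{m + \alpha}$ for every $m$ in the first range, where $i_m = \lfloor m/k \rfloor$. But $i_m = 0$ only for $m \in \{0, \ldots, k-1\}$; taking $m = k - 1$ gives $k - 1 + \alpha < k$, hence $\alpha = 0$, contradicting $\alpha > 0$.

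The main obstacle is the first step, and its difficulty stems from the fact that the letters $b$ and $e$ are allowed to occur inside the $w_j$, $u_j$, and $v_j$. Because of this, there is no direct way to identify boundary characters by inspection of the raw string, and one must instead locate block boundaries via the periodic structure of the interiors. This is precisely where the hypothesis $q < l/2$ enters: it makes each interior long enough that its period is pinned down by Fine--Wilf, and it makes the combined boundary length within each block ($q$ out of $lq$) small enough that a window straddling two blocks cannot masquerade as a single interior. Making this rigorous in full generality, independently of any structural assumption on the $w_j$'s, is the technical core of the proof.
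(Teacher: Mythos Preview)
The paper does not give its own proof of this lemma: Section~3 explicitly skips proofs and refers to \cite[Sections~3.3 and~4]{FW1}. So there is no in-paper argument against which to compare your proposal; what follows is an assessment of your sketch on its own terms.

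Your two-step plan is the natural one, and the second step is essentially right, with one gap worth naming. You assert that once blocks are aligned, ``matching the boundary $b$-run length'' forces $i_m=i_{m+\alpha}$. This is not immediate, since $b$ and $e$ may occur inside the $u_j,w_j$ and so the length of the initial $b$-run is not obviously intrinsic to the block as a bare word. The gap is easily closed: suppose $b^{a}X^{l-1}e^{q-a}=b^{a'}Y^{l-1}e^{q-a'}$ as words of length $lq$ with $|X|=|Y|=q$ and $a<a'$; set $d=a'-a$. Comparing positions $[a,a')$ gives that $X$ begins with $b^{d}$; comparing positions $[a+(l-1)q,\,a'+(l-1)q)$ gives that $Y$ ends with $e^{d}$; and on the overlap of the two interiors, of length $(l-1)q-d\ge(l-2)q+1\ge q$, the two $q$-periodic patterns force $Y$ to be the cyclic shift of $X$ by $d$, so $Y$ ends with the first $d$ letters of $X$, namely $b^{d}$. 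This contradicts $Y$ ending in $e^{d}$. Hence $a=a'$, equal aligned blocks have equal $i$-index, and your chase $i_0=i_\alpha=0\Rightarrow\alpha<k$, then $i_{k-1}=i_{k-1+\alpha}=0\Rightarrow\alpha\le0$ goes through.

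The first step --- forcing $t\equiv 0\pmod{lq}$ --- is where your proposal remains a sketch rather than an argument. The ingredients you list (long $q$-periodic interiors, short boundaries, the bijection $i\mapsto j_i$) are the right ones, but two points need attention. First, your claim that the first interior of $w$ ``would straddle two consecutive blocks of $u$'' is not always true: for $t\bmod lq$ close to $lq$ the interval $[t+q,t+lq)$ can lie entirely inside one block of $u$. Second, analysing a single interior window only pins down $t\bmod q$ up to a cyclic-shift ambiguity between $w_0$ and the relevant $u_j$; it does not by itself yield $t\bmod lq$. One genuinely has to track how the junctions $e^{j_i}b^{q-j_{i'}}$ between consecutive blocks of $uv$ sit relative to the interiors of $w$, across several values of $i$, and organise the resulting constraints into a contradiction. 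You correctly identify this as the technical core, but the proposal does not yet supply the mechanism that turns ``constraints across all $kq$ windows'' into $t\equiv 0\pmod{lq}$.
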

Next we inductively define four sequences of natural numbers $\{k_n\}_{n=1}^\infty, \{l_n\}_{n=1}^\infty, \{p_n\}_{n=1}^\infty$  Define $k_0, l_0$ and $\{q_n\}_{n=1}^\infty$ as follows:
\begin{align}\label{para cons}
p_{n+1}=p_nq_nk_nl_n+1\qquad\qquad q_{n+1}=k_nl_nq_n^2
\end{align}
We note that the above relations makes $p_n$ and $q_n$ relatively prime. So we can define for $0\leq i<q_n$ a natural number $0\leq j_i<q_n $ such that 
\begin{align}
j_i=(p_n)^{-1}i\mod q_n
\end{align}

Now we are ready to build a construction sequence for our symbolic shift. We choose a nonempty finite or countable alphabet $\Sigma$ and choose two letters $b$ and $e$ not in $\Sigma$. We start or induction by putting $\mathcal{W}_0=\Sigma$. Now we assume that the induction has been carried out till the $n$ th step.

At the $n+1$ th step we we choose a set $P_{n+1}\subset (\mathcal{W}_n)^{k_n}$ and put 
\begin{align}\label{n+1 cons}
\mathcal{W}_{n+1}\coloneqq \{\mathcal{C}(w_0,\ldots,w_{k_n-1}):(w_0,\ldots,w_{k_n-1})\in P_{n+1}\}
\end{align}
It follows from lemma \ref{lemma u readable} that $\mathcal{W}_{n+1}$ is uniquely readable.

Next we introduce the concept of strong unique readability. We can view $\mathcal{W}_n$ as a collection of $\L_n$ letters and elements of $P_{n+1}$ can be viewed as  words constructed out of $\Lambda_n$. If $P_{n+1}$ is uniquely readable in the alphabet $\Lambda_n$, we say that the construction sequence satisfies the \emph{strong unique readability} assumption.

A construction sequence which satisfies \ref{n+1 cons}, uses parameters satisfying \ref{para cons}, and satisfies the strong unique readability assumption is called a \emph{circular construction sequence}.

\begin{lemma}
If the $\{l_n\}_{n=1}^\infty$ parameters of a circular construction sequence satisfies 
\begin{align}\label{3.4569}
\sum_{n=1}^\infty 1/l_n<\infty
\end{align}
and for each $n$ there exists a number $f_n$ such that each word $w\in\mathcal{W}_n$ occurs exactly $f_n$ times in each word in $P_{n+1}$ then the circular construction sequence is strongly uniform.
\end{lemma}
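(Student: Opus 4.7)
The plan is to identify a canonical parsing of each $w' \in \mathcal{W}_{n+1}$ and count the occurrences of an arbitrary $w \in \mathcal{W}_n$ directly from that parsing. Fix $w' \in \mathcal{W}_{n+1}$. By (\ref{n+1 cons}), there exists a tuple $(w_0,\ldots,w_{k_n-1}) \in P_{n+1}$ with
\begin{align*}
w' = \mathcal{C}(w_0,\ldots,w_{k_n - 1}) = \prod_{i=0}^{q_n - 1} \prod_{j=0}^{k_n - 1} \bigl( b^{j_{q_n - i}} w_j^{l_n - 1} e^{j_i} \bigr).
\end{align*}
Expanding this product yields a natural decomposition of the form (\ref{parsing}) in which the $w_i$-blocks are precisely the individual copies of $w_j$ coming from each $w_j^{l_n - 1}$ factor, while the $u_i$-blocks consist entirely of boundary letters $b$ and $e$. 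In particular, for each fixed $j \in \{0,\ldots,k_n - 1\}$, the word $w_j$ appears as a $w_i$-block exactly $q_n(l_n - 1)$ times.

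Next, I would argue that this canonical parsing is the one referenced in the definition of uniformity. The strong unique readability of $P_{n+1}$, combined with Lemma \ref{lemma u readable} and the fact that $b,e \notin \Sigma$, rules out any occurrence of a word from $\mathcal{W}_n$ that straddles canonical block boundaries or that overlaps the boundary strings $b^{j_{q_n - i}}$ and $e^{j_i}$. Hence the multiset of $w_i$-blocks in the parsing coincides with the multiset of $w_j$ occurrences produced by expanding $\mathcal{C}$. By hypothesis, for every $w \in \mathcal{W}_n$ the index set $\{j : w_j = w\}$ has cardinality exactly $f_n$, independent of which tuple in $P_{n+1}$ was chosen. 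Consequently
\begin{align*}
\#\{i : w = w_i\} \;=\; f_n \cdot q_n (l_n - 1),
\end{align*}
which depends only on $n$, establishing strong uniformity with
\begin{align*}
d_n(w) \;=\; \frac{q_n}{q_{n+1}}\cdot f_n q_n (l_n - 1) \;=\; \frac{f_n(l_n - 1)}{k_n l_n} \;\in\; (0,1),
\end{align*}
a value independent of $w$.

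The main delicate point is justifying the uniqueness of the canonical parsing, i.e.\ ruling out any ``accidental'' occurrence of $w \in \mathcal{W}_n$ as a subword of $w'$ that is not captured by the canonical decomposition. This is where the strong unique readability of $P_{n+1}$ together with Lemma \ref{lemma u readable} (applied inductively on $n$) becomes essential. The hypothesis $\sum_n 1/l_n < \infty$ plays only a supporting role: it guarantees that the boundary proportion, which is of order $1/l_n$ at step $n$, yields a summable error sequence $\{\e_n\}$, so that the circular sequence really is a valid (and now strongly uniform) construction sequence in the sense defined earlier.
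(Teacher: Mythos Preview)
The paper does not actually prove this lemma; it is imported from \cite{FW1} with the remark ``We skip proofs and recall the results only.'' So there is no paper argument to compare against directly.

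Your argument is correct in substance. The canonical parsing of $w'=\mathcal{C}(w_0,\ldots,w_{k_n-1})$ places exactly $q_n(l_n-1)$ copies of each $w_j$ into the $w_i$-slots, and the hypothesis on $P_{n+1}$ gives $\#\{j:w_j=w\}=f_n$, whence the count $f_nq_n(l_n-1)$ depends only on $n$. The summability of $1/l_n$ ensures the boundary proportion $(q_n-1)/(l_nq_n)$ contributes a summable $\e_n$, so the sequence is a bona fide construction sequence.

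One remark: you spend effort ruling out ``accidental'' occurrences of $w$ that straddle block boundaries, invoking Lemma~\ref{lemma u readable} and strong unique readability. This is not needed. The quantity appearing in the definition of (strong) uniformity is the count of $w$ among the $w_i$-blocks \emph{in the chosen parsing} (\ref{parsing}), not the total number of subword occurrences of $w$ inside $w'$. Since you are exhibiting the parsing yourself via the expansion of $\mathcal{C}$, you are free to simply read off the multiset $\{w_i\}$ from that decomposition; uniqueness of the parsing is irrelevant. Dropping that paragraph would streamline the proof without loss.
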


\begin{definition}\label{definition circular systems}
A symbolic shift $\mathbb{K}$ constructed from a circular construction system is called a \emph{circular system}. A symbolic shift $\mathbb{K}$ constructed from a (strongly) uniform circular construction system is called a \emph{(strongly) uniform circular system}. 
\end{definition}

\begin{lemma}
Let $\mathbb{K}$ be a circular system. Then 
\begin{enumerate}
\item A shift invariant measure $\nu$ concentrates on $S\subset \mathbb{K}$ iff $\nu$ concentrates on the collection of $s\in\mathbb{K}$ such that $\{i:s(i)\not\in \{b,e\}\}$ is unbounded in both $\Z^+$ and $\Z^-$ direction.
\item If $\mathbb{K}$ is a uniform circular system and $\nu$ is a shift invariant measure on $\mathbb{K}$, then $\nu(S)=1$. In particular, there is a unique non-atomic shift-invariant measure on $\mathbb{K}$ by Lemma 3.7.
\end{enumerate}
\end{lemma}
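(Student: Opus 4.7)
The plan is to reduce both parts to controlling the set $B = \{s \in \mathbb{K} : s(i) \in \{b,e\} \text{ for all } i\}$ of sequences containing no $\Sigma$-letter. The key structural fact I will establish is $S = \mathbb{K} \setminus B$.

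For part (1), I first use shift-invariance. Setting $M(s) = \sup\{i : s(i) \notin \{b,e\}\}$, we have $M(\mathrm{sh}(s)) = M(s) - 1$, so $\nu(\{M \leq M_0\})$ is independent of $M_0 \in \Z$; intersecting as $M_0 \to -\infty$ gives $\nu(A_+) = \nu(B)$, where $A_+ := \{M < \infty\}$. Symmetrically $\nu(A_-) = \nu(B)$, and since $T^c = A_+ \cup A_-$ this yields $\nu(T) = 1 \iff \nu(B) = 0$. Next, $S \subseteq \mathbb{K} \setminus B$ is immediate because every $w \in \mathcal{W}_m$ for $m \geq 1$ contains a $\Sigma$-letter (induction on the $\mathcal{C}$-construction, with $\mathcal{W}_0 = \Sigma$). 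For the reverse, fix $s \in \mathbb{K}$ with a $\Sigma$-letter at position $p$ and any $m$. By the definition of $\mathbb{K}$, the window $s|_{[p-q_m, p+q_m]}$ is a contiguous subword of some $w' \in \mathcal{W}_N$; in the iterated $\mathcal{C}$-parsing of $w'$ down to level $m$, the $\Sigma$-containing interior is partitioned into complete $\mathcal{W}_m$-subblocks of length $q_m$ separated by boundary material, and every $\Sigma$-position of $w'$ lies in exactly one such subblock. The subblock through the image of $p$ fits inside the window of width $2q_m + 1$, so it corresponds to a contiguous subword of $s$ lying in $\mathcal{W}_m$. Varying $m$ gives $s \in S$. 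Combining, $\nu(S) = 1 \iff \nu(B) = 0 \iff \nu(T) = 1$.

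For part (2), assume $\nu$ is a (non-atomic) shift-invariant probability measure on the uniform circular system $\mathbb{K}$. A direct inspection of $\mathcal{C}$ shows that in any $w' \in \mathcal{W}_n$ two separated boundary blocks are always separated by at least one $\Sigma$-letter, so any all-$(b,e)$ sequence in $\mathbb{K}$ is locally of the form $e^* b^*$, and globally is either constant $b$, constant $e$, or a single-transition pattern $\ldots eebb\ldots$. Thus $B$ consists of two shift-fixed points plus one countable shift-orbit, and $\nu(B) = 0$ for non-atomic $\nu$. Part (1) then gives $\nu(S) = 1$. The uniqueness of the non-atomic shift-invariant measure follows by applying Lemma \ref{lemma 11}(2) to $\mathbb{K}$ viewed as a uniform symbolic system, whose unique non-atomic shift-invariant measure is concentrated on $S$.

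The main obstacle is the converse inclusion $\mathbb{K} \setminus B \subseteq S$ in part (1). Making the parsing argument precise requires an induction on $N - m$ verifying that every $\Sigma$-letter in $w' \in \mathcal{W}_N$ lies inside a complete $\mathcal{W}_m$-subblock of $w'$, and that this subblock is contained in a window of width at most $2q_m + 1$ around the $\Sigma$-letter. The induction uses unique readability of $\mathcal{C}$ (Lemma \ref{lemma u readable}) and tracks the specific offset structure of the $\mathcal{C}$-operator; a similar combinatorial inspection underlies the structural description of $B$ in part (2).
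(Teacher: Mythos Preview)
The paper does not supply its own proof of this lemma; Section 3 explicitly states that proofs are omitted and refers to \cite[sections 3.3 and 4]{FW1}. So there is nothing in the paper to compare against directly, and I can only assess your argument on its merits.

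Your approach is sound. Reducing both assertions to the all-boundary set $B=\{s\in\mathbb{K}:s(i)\in\{b,e\}\text{ for all }i\}$ is the right move, and the set-theoretic identity $S=\mathbb{K}\setminus B$ is correct: the inclusion $\mathbb{K}\setminus B\subseteq S$ follows because in the explicit $\mathcal{C}$-parsing of any $w'\in\mathcal{W}_N$ every $\Sigma$-letter sits inside a complete $\mathcal{W}_m$-subblock of length $q_m$, and such a subblock automatically lies in the window $[p-q_m,p+q_m]$ about the $\Sigma$-position $p$. One small remark: you do not actually need unique readability (Lemma~\ref{lemma u readable}) here --- the canonical parsing coming from the definition of $\mathcal{C}$ already suffices; unique readability would only be needed if you wanted the subblock to be \emph{unique}. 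The shift-invariance argument via $M(s)=\sup\{i:s(i)\notin\{b,e\}\}$ cleanly gives $\nu(T)=1\iff\nu(B)=0$, and the structural description of $B$ as two fixed points together with a single infinite orbit is right, since an easy induction shows that every $\mathcal{W}_n$-word begins with a block of $b$'s before its first $\Sigma$-letter and ends with a block of $e$'s after its last, so every maximal $(b,e)$-run in any $\mathcal{W}_n$-word has the form $e^*b^*$.

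On part (2) you are right to insert ``non-atomic'': the constant-$b$ and constant-$e$ sequences genuinely belong to $\mathbb{K}$ (arbitrarily long $b$- and $e$-runs occur in the $\mathcal{W}_n$ as $n$ grows), and the Dirac measures on them are shift-invariant with $\nu(S)=0$. So the lemma as stated is slightly loose; your reading is the one that makes the ``in particular'' clause via Lemma~\ref{lemma 11}(2) go through.
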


We end this section after defining a canonical factor of a circular system measure theoretically isomorphic to a rotation of the circle. Let $\{k_n\}_{n=1}^\infty$ and $\{l_n\}_{n=1}^\infty$ be two parameter sequences satisfying \ref{3.4569}. 
Let $\mathbb{K}$ be any circular system constructed out of these parameters. We also construct a second circular system. Let $\Sigma_0=\{\ast\}$ and we define a construction sequence 
\begin{align}
\mathcal{W}_0:=\Sigma_0,\qquad\text{and if }\qquad\mathcal{W}_n:=\{w_n\}\quad\text{then}\quad\mathcal{W}_{n+1}=\{\mathcal{C}(w_n,\ldots,w_n)\}.
\end{align}
We denote the resulting circular system by $\mathcal{K}$. 

Now we claim that $\mathcal{K}$ is a factor of $\mathbb{K}$. We see that one can construct an explicit factor map as follows
\begin{align}
\pi:\mathbb{K}\to\mathcal{K}\qquad\text{defined by}\qquad \pi(x)\coloneqq\begin{cases} x(i) & \quad\text{if}\quad x(i)\in\{b,e\}\\ \ast & \quad\text{otherwise}
\end{cases}
\end{align}
We end this section with the following observations:
\begin{itemize}
\item $\pi:\mathbb{K}\to\mathcal{K}$ is Lipschitz.
\item $\pi\circ\text{sh}^\pm=\text{sh}^\pm\circ\pi$.
\item $\pi$ is a factor map of $\mathcal{K}$ to $\mathbb{K}$ and from $\mathcal{K}^{-1}$ to $\mathbb{K}^{-1}$.
\end{itemize}

In \cite[Section 4.3]{FW2} a specific isomorphism $\natural:\mathcal{K} \to rev(\mathcal{K})$ is introduced. It is called the \emph{natural map} and will serve as a benchmark for understanding of maps from $\mathbb{K}$ to $rev(\mathbb{K})$ (see e.g. Definition \ref{def:synchronous}). 

\subsection{Categories $\mathcal{OB}$ and $\mathcal{CB}$ and the functor $\mathcal{F}:\mathcal{OB}\to \mathcal{CB}$} \label{subsec:catfunc}

For a fixed circular coefficient sequence $\left(k_{n},l_{n}\right)_{n\in\mathbb{N}}$
we consider two categories $\mathcal{OB}$ and $\mathcal{CB}$ whose
objects are odometer-based and circular systems respectively. The
morphisms in these categories are (synchronous and anti-synchronous)
graph joinings. 

\begin{definition}\label{def:synchronous} If $\mathbb{K}$ is an odometer based system, we denote its odometer base by $\mathbb{K}^{\pi}$ and let $\pi:\mathbb{K}\to \mathbb{K}^{\pi}$ be the canonical factor map. Similarly, if $\mathbb{K}^c$ is a circular system, we let $(\mathbb{K}^c)^{\pi}$ be the rotation factor $\mathcal{K}$ and let $\pi:\mathbb{K}^c\to \mathcal{K}$ be the canonical factor map.
\begin{enumerate}
    \item Let $\mathbb{K}$ and $\mathbb{L}$ be odometer based systems with the same coefficient sequence and let $\rho$ be a joining between $\mathbb{K}$ and $\mathbb{L}^{\pm 1}$. Then $\rho$ is called \emph{synchronous} if $\rho$ joins $\mathbb{K}$ and $\mathbb{L}$ and the projection of $\rho$ to a joining on $\mathbb{K}^{\pi}\times \mathbb{L}^{\pi}$ is the graph joining determined by the identity map. The joining $\rho$ is called \emph{anti-synchronous} if $\rho$ joins $\mathbb{K}$ and $\mathbb{L}^{-1}$ and the projection of $\rho$ to a joining on $\mathbb{K}^{\pi}\times (\mathbb{L}^{-1})^{\pi}$ is the graph joining determined by the map $x\mapsto -x$.
    \item Let $\mathbb{K}^c$ and $\mathbb{L}^c$ be circular systems with the same coefficient sequence and let $\rho$ be a joining between $\mathbb{K}^c$ and $(\mathbb{L}^c)^{\pm 1}$. Then $\rho$ is called \emph{synchronous} if $\rho$ joins $\mathbb{K}^c$ and $\mathbb{L}^c$ and the projection of $\rho$ to a joining on $\mathcal{K}\times \mathcal{L}$ is the graph joining determined by the identity map. The joining $\rho$ is called \emph{anti-synchronous} if $\rho$ joins $\mathbb{K}^c$ and $(\mathbb{L}^c)^{-1}$ and the projection of $\rho$ to a joining on $\mathcal{K}\times \mathcal{L}^{-1}$ is the graph joining determined by the map $rev(\cdot)\circ \natural$.
\end{enumerate}
\end{definition}
In \cite{FW2} Foreman and Weiss define a functor taking
odometer-based systems to circular system that preserves the factor
and conjugacy structure. To review the definition
of the functor we fix a circular coefficient sequence $\left(k_{n},l_{n}\right)_{n\in\mathbb{N}}$.
Let $\Sigma$ be an alphabet and $\left(\mathtt{W}_{n}\right)_{n\in\mathbb{N}}$
be a construction sequence for an odometer-based system with coefficients
$\left(k_{n}\right)_{n\in\mathbb{N}}$. Then we define a circular
construction sequence $\left(\mathcal{W}_{n}\right)_{n\in\mathbb{N}}$
and bijections $c_{n}:\mathtt{W}_{n}\to\mathcal{W}_{n}$ by induction:
\begin{itemize}
\item Let $\mathcal{W}_{0}=\Sigma$ and $c_{0}$ be the identity map.
\item Suppose that $\mathtt{W}_{n}$, $\mathcal{W}_{n}$ and $c_{n}$ have
already been defined. Then we define 
\[
\mathcal{W}_{n+1}=\left\{ \mathcal{C}_{n}\left(c_{n}\left(\mathtt{w}_{0}\right),c_{n}\left(\mathtt{w}_{1}\right),\dots,c_{n}\left(\mathtt{w}_{k_{n}-1}\right)\right)\::\:\mathtt{w}_{0}\mathtt{w}_{1}\dots\mathtt{w}_{k_{n}-1}\in\mathtt{W}_{n+1}\right\} 
\]
 and the map $c_{n+1}$ by setting 
\[
c_{n+1}\left(\mathtt{w}_{0}\mathtt{w}_{1}\dots\mathtt{w}_{k_{n}-1}\right)=\mathcal{C}_{n}\left(c_{n}\left(\mathtt{w}_{0}\right),c_{n}\left(\mathtt{w}_{1}\right),\dots,c_{n}\left(\mathtt{w}_{k_{n}-1}\right)\right).
\]
 In particular, the prewords are 
\[
P_{n+1}=\left\{ c_{n}\left(\mathtt{w}_{0}\right)c_{n}\left(\mathtt{w}_{1}\right)\dots c_{n}\left(\mathtt{w}_{k_{n}-1}\right)\::\:\mathtt{w}_{0}\mathtt{w}_{1}\dots\mathtt{w}_{k_{n}-1}\in\mathtt{W}_{n+1}\right\} .
\]
 
\end{itemize}
\begin{definition}
Suppose that $\mathbb{K}$ is built from a construction sequence $\left(\mathtt{W}_{n}\right)_{n\in\mathbb{N}}$
and $\mathbb{K}^{c}$ has the circular construction sequence $\left(\mathcal{W}_{n}\right)_{n\in\mathbb{N}}$
as constructed above. Then we define a map $\mathcal{F}$ from the
set of odometer-based systems (viewed as subshifts) to circular systems
(viewed as subshifts) by 
\[
\mathcal{F}\left(\mathbb{K}\right)=\mathbb{K}^{c}.
\]
 
\end{definition}

\begin{remark}
The map $\mathcal{F}$ is a bijection between odometer-based symbolic
systems with coefficients $\left(k_{n}\right)_{n\in\mathbb{N}}$ and
circular symbolic systems with coefficients $\left(k_{n},l_{n}\right)_{n\in\mathbb{N}}$
that preserves uniformity. Since the construction sequences for our
odometer-based systems will be uniquely readable, the corresponding
circular construction sequences will automatically satisfy the strong
readability assumption.
\end{remark}

In \cite{FW2} it is shown that $\mathcal{F}$ gives an isomorphism between the categories $\mathcal{OB}$ and $\mathcal{CB}$. We state the following fact which is part of the main result in \cite{FW2}.

\begin{fact}\label{fact:functor}
For a fixed circular coefficient sequence $(k_n,l_n)_{n\in \N}$ the categories $\mathcal{OB}$ and $\mathcal{CB}$ are isomorphic by the functor $\mathcal{F}$ that takes synchronous isomorphisms to synchronous isomorphisms and  anti-synchronous isomorphisms to anti-synchronous isomorphisms.
\end{fact}

\section{Abstract untwisted AbC method}\label{section abstract AbC}

We recall the AbC method in a way that is most convenient to us. The exposition is similar to the one in \cite[section 6.1]{FW1} and \cite[Part I section 8]{Ka03}.
 
Let $R$ be the usual action of the circle $\T^1$ on $\T^2$ obtained by translation of the first coordinate. More precisely,
\begin{align}
R_t:\T^2\to\T^2\qquad\text{defined by}\qquad R_t(x_1,x_2)=(x_1+t,x_2)
\end{align}

\subsection{Notations}

We introduce some partitions of the circle $\T^1$. For any natural number $q$ we define the partition of the unit circle into half open intervals of length $1/q$ as follows,
\begin{align}
\mathcal{I}_{q}:=\Big\{\Big[\frac{i}{q},\frac{i+1}{q}\Big)\subset \T:i=0,1,\ldots, q-1\Big\}
\end{align}
Next, given natural numbers $s$ and $q$, we define a partition of the torus $\T^2$ into rectangles of length $1/q$ and height $1/s$ as follows,
\begin{align}
\xi^s_q:=\mathcal{I}_q\otimes\mathcal{I}_s:= \Big\{\Big[\frac{i}{q},\frac{i+1}{q}\Big)\times\Big[\frac{j}{s},\frac{j+1}{s}\Big)\subset \T:i=0,1,\ldots, q-1; j=0,1,\ldots, s-1   \Big\}
\end{align}
In the AbC construction we deal with specific sequences of natural numbers $q_n$ and $s_n$ and we will often use the following notation for convenience,
\begin{align}
\xi_n:=\xi_{q_n,s_n}
\end{align}
and the respective atoms of the above partition will be denoted by 
\begin{align}
R^n_{i,j}  : =  \Big[\frac{i}{q_n},\frac{i+1}{q_n}\Big)\times\Big[\frac{j}{s_n},\frac{j+1}{s_n}\Big)
\end{align}
We note that with $\a=p/q$, $p$ and $q$ mutually prime, the atoms of $\xi_q^s$ is permuted by the action of $R_\a$. This action results in a permutation consisting of $s$ cycles, each of length $q$.

\subsection{The untwisted AbC method}

Now we give a description of the abstract `untwisted' AbC method. This is an inductive process and we assume that the construction has been carried out till the $n$ th stage. So we have the following information available to us,
\begin{enumerate}
\item Sequences of natural numbers $\{k_m\}_{m=1}^{n-1},\{s_m\}_{m=1}^{n},\{l_m\}_{m=1}^{n-1},\{p_m\}_{m=1}^n,\{q_m\}_{m=1}^n$ and a sequence of rational numbers $\{\a_m\}_{m=1}^n$ satisfying the following relations
\begin{align}
p_{m+1}=p_mq_mk_ml_m+1\qquad\quad q_m=k_ml_mq_m^2\qquad\quad\a_m=p_m/q_m\qquad\quad s_m^{k_m}\geq s_{m+1}
\end{align} 
\item Measure preserving transformations $\{h_m\}_{m=1}^n$ of the torus $\T^2$. We assume that $h_m$ is a permutation of $\xi_{k_{m-1}q_{m-1}}^{s_m}$, $h_m$ commutes with $R^{1/q_{m-1}}$ and $h_m$ leaves the rectangle $\cup_{j=0}^{s_{m-1}-1} R_{i,j}^{m-1}$ invariant. \footnote{ The last condition is why we call this version of Anosov-Katok \emph{untwisted}.}
\item Measure preserving transformations $\{H_m\}_{m=1}^n$ and $\{T_m\}_{m=1}^n$ of the torus $\T^2$ satisfying the following conditions,
\begin{align}
H_m:=h_0\circ h_1 \circ\ldots\circ h_m\qquad\qquad T_m:= H_m\circ R_{\a_m}\circ H_m^{-1}
\end{align}
\end{enumerate} 

Now we describe how the construction is carried out in the $n+1$ the stage of the AbC method. We choose a natural number $s_{n+1}$ followed by the natural number $k_n$ so that the following growth condition is satisfied
\begin{align}
s_n^{k_n}\geq s_{n+1}
\end{align}
Next we choose a measure preserving transformation $h_{n+1}$ of $\T^2$ which is a permutation of $\xi_{k_nq_n,s_{n+1}}$ (and hence a permutation of $\xi_{n+1}$). Since we are doing the untwisted version of the AbC method, we additionally ensure that the transformation $h_{n+1}$ leaves the rectangle $[0,1/q_n)\times \T$ invariant. We finally choose $l_n$ to be a large enough natural number so that the conjugacies $T_n$ and $T_{n+1}$ are close enough and the sequence $T_n$ converges in the weak topology. We also note that $p_{n+1},q_{n+1}$ and $\a_{n+1}$ are automatically determined by the formulae $p_{n+1}=p_nq_nk_nl_n+1, q_{n+1}=k_nl_nq_n^2$ and $\a_{n+1}=p_{n+1}/q_{n+1}$. This completes the description of our version of the abstract AbC method.

Now we define the following sequence of partitions,
\begin{align}
\zeta_n = H_n(\xi_n)
\end{align}
and note that since $h_{m'}$ is a permutation of $\xi_{m}$ for all $m'\leq m$, we can conclude that $\zeta_n$ is only a permutation of $\xi_{n}$ and hence is a generating sequence of finite partitions if $\xi_n$ is a generating sequence of partitions. 

\subsection{Special requirements}\label{section requirements}

Till now we described the abstract untwisted AbC method in its full generality. In our case we would need it to satisfy the following additional requirements:

\begin{itemize}
\item \textbf{Requirement 1:} The sequence $s_n$ tends to $\infty$
\item \textbf{Requirement 2:} For each $R^n_{0,j}\in\xi_n$ and each $s<s_{n+1}$, we have
\begin{align}
\Big\{t<k_n:h_{n+1}\Big(\Big[\frac{t}{k_nq_n},\frac{t+1}{k_nq_n}\Big)\times\Big[\frac{s}{s_{n+1}},\frac{s+1}{s_{n+1}}\Big)\Big)\subset R^n_{0,j}\Big\}=\frac{k_n}{s_n}
\end{align} 
Note that this assumption allows us to define a map $s\mapsto (j_0,\ldots,j_{k_n-1})_s$  from $\{0,1,\ldots,s_{n+1}-1\}$ to $\{0,1,\ldots,s_n-1\}^{k_n}$ so that for any fixed $s<s_{n+1}$,
\begin{align*}
h_{n+1}\Big(\Big[\frac{t}{k_nq_n},\frac{t+1}{k_nq_n}\Big)\times\Big[\frac{s}{s_{n+1}},\frac{s+1}{s_{n+1}}\Big)\Big)\subset R^n_{0,j_t}
\end{align*}
\item \textbf{Requirement 3:} We assume that the map $s\mapsto (j_0,\ldots,j_{k_n-1})_s$ is one to one.
\end{itemize} 
Note that the above requirements are more than enough to guarantee ergodicity for the limit transformation $T$. We end this section by stating an obvious lemma which serves as a converse to the above.

\begin{lemma}\label{lemma generating 1}
Let $w_0,\ldots,w_{s_{n+1}-1}\subset \{0,1,\ldots,s_n-1\}^{k_n}$ be words such that each $i$ with $0\leq i\leq s_n$ occurs $k_n/s_n$ times in each $w_j$. Then there is an invertible measure preserving $h_{n+1}$ commuting with $R_{\a_n}$ and inducing a permutation of $\xi_{k_nq_n,s_{n+1}}$ such that if $j_t$ is the $t$ th letter of $w_s$ then 
\begin{align*}
h_{n+1}\Big(\Big[\frac{t}{k_nq_n},\frac{t+1}{k_nq_n}\Big)\times\Big[\frac{s}{s_{n+1}},\frac{s+1}{s_{n+1}}\Big)\Big)\subset R^n_{0,j_t}
\end{align*}
\end{lemma}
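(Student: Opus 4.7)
The approach is to build $h_{n+1}$ first on the base vertical strip $B\coloneqq [0,1/q_n)\times\T$ as a permutation of the atoms of $\xi_{k_nq_n,s_{n+1}}$ lying in $B$, and then extend it to the rest of $\T^2$ by imposing the commutation $h_{n+1}\circ R_{\alpha_n}=R_{\alpha_n}\circ h_{n+1}$. Since $\gcd(p_n,q_n)=1$, the strips $R_{\alpha_n}^k(B)$ for $k=0,\ldots,q_n-1$ tile the torus, so the commutation forces the rule $h_{n+1}(R_{\alpha_n}^k x)\coloneqq R_{\alpha_n}^k h_{n+1}(x)$ for $x\in B$, and this extension is well defined. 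Moreover $\alpha_n=p_n/q_n=(p_nk_n)/(k_nq_n)$ is an integer multiple of $1/(k_nq_n)$, so $R_{\alpha_n}$ itself permutes the atoms of $\xi_{k_nq_n,s_{n+1}}$, which ensures that the extended $h_{n+1}$ still permutes this partition.

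For the construction on $B$, write $A_{t,s}\coloneqq [t/(k_nq_n),(t+1)/(k_nq_n))\times[s/s_{n+1},(s+1)/s_{n+1})$ for the atoms of $\xi_{k_nq_n,s_{n+1}}$ inside $B$, and denote by $w_s(t)$ the $t$-th letter of $w_s$. For each $j\in\{0,\ldots,s_n-1\}$ set $I_j\coloneqq\{(t,s):w_s(t)=j\}$. By the hypothesis, for each fixed $s$ exactly $k_n/s_n$ values of $t$ satisfy $(t,s)\in I_j$, hence $|I_j|=s_{n+1}\cdot k_n/s_n$. On the other hand the target rectangle $R^n_{0,j}\subset B$ decomposes into exactly $s_{n+1}k_n/s_n$ atoms of $\xi_{k_nq_n,s_{n+1}}$, using the divisibility $s_n\mid s_{n+1}$ implicit in the setup (without which Requirement 2 would already be ill-posed). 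The two cardinalities agree, so for each $j$ I fix an arbitrary bijection $\sigma_j$ between $I_j$ and the atoms of $\xi_{k_nq_n,s_{n+1}}$ lying inside $R^n_{0,j}$, and define $h_{n+1}$ on $A_{t,s}$ to be the unique translation carrying $A_{t,s}$ onto $\sigma_j(t,s)$ whenever $(t,s)\in I_j$.

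Since each $\sigma_j$ is a bijection and the $R^n_{0,j}$ are disjoint, these pieces glue to a measure preserving bijection of $B$ onto itself that permutes $\xi_{k_nq_n,s_{n+1}}|_B$. Extending equivariantly under $R_{\alpha_n}$ as above produces $h_{n+1}$ on the whole torus; it is measure preserving, permutes $\xi_{k_nq_n,s_{n+1}}$, commutes with $R_{\alpha_n}$ by construction, and satisfies the required inclusion $h_{n+1}(A_{t,s})\subset R^n_{0,j_t}$ for every $(t,s)$ coming from the base strip, which is exactly the statement of the lemma.

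The only delicate point is the counting argument in the second paragraph, which is what turns the combinatorial prescription carried by the words $w_s$ into an honest partition permutation; all remaining steps are a routine affine gluing followed by an equivariant extension. Accordingly I do not anticipate any serious obstacle beyond this bookkeeping.
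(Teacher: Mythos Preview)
Your argument is correct: build $h_{n+1}$ on the fundamental domain $B=[0,1/q_n)\times\T$ by matching the $k_ns_{n+1}/s_n$ source atoms with letter $j$ to the $k_ns_{n+1}/s_n$ atoms of $\xi_{k_nq_n,s_{n+1}}$ inside $R^n_{0,j}$ via translations, and then extend equivariantly under $R_{\alpha_n}$; the counting and the gluing are exactly right, and the observation that $s_n\mid s_{n+1}$ is what makes the target decompose into whole atoms. The paper does not supply a proof of this lemma at all---it introduces it as ``an obvious lemma which serves as a converse to the above'' and moves on---so there is nothing to compare against; your write-up is the natural fleshing-out of what the authors omit.
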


\section{Approximating partition permutations by real-analytic diffeomorphisms}\label{section approximation}

The purpose of this section is to show that any permutation of a partition of $\T^2$ by a rectangular grid can be approximated sufficiently well by real-analytic diffeommorphisms. This is the real-analytic version of the content of \cite[section 6.2]{FW1}. We note that their smooth construction can be done on the torus, annulus or the disk. Unfortunately the lack of bump functions in the real-analytic category makes life harder and our real-analytic constructions are only valid for the torus. For a disk, even very basic questions like the existence of real-analytic ergodic diffeomorphisms remain open. One can refer to \cite[section 7.1]{FK} and \cite[section 6.3]{Ba-Ku} for a comprehensive analysis of known difficulties for a real-analytic AbC method on arbitrary real-analytic manifolds.

\subsection{Block-slide type maps and their analytic approximations}

We recall that a \emph{step function} on the unit interval is a finite linear combination of indicator functions on intervals. We define the following two types of piecewise continuous maps on $\T^2$,
\begin{align}
& \mathfrak{h}_1:\T^2\to\T^2\qquad\text{defined by}\qquad\mathfrak{h}_1(x_1,x_2):=(x_1,\; x_2 + s_1(x_1)\mod 1)\\
& \mathfrak{h}_2:\T^2\to\T^2\qquad\text{defined by}\qquad\mathfrak{h}_2(x_1,x_2):=(x_1 + s_2(x_2)\mod 1,\; x_2)
\end{align}
where $s_1$ and $s_2$ are step functions on the unit interval. 
The first map has the same effect as partitioning $\T^2$ into smaller rectangles using vertical lines and sliding those rectangles vertically according to $s_1$. On the other hand the second map has the same effect as partitioning $\T^2$ into smaller rectangles using horizontal lines and sliding those rectangles horizontally according to $s_2$.
We refer to any finite composition of maps of the above kind as a \emph{block-slide} type of map on $\T^2$. This is somewhat similar to playing a \emph{game of nine} without the vacant square on $\T^2$.

The purpose of the section is to demonstrate that a block-slide type of map can be approximated extremely well by measure preserving real analytic diffeomorphisms. This can be achieved because step function and be approximated well by real analytic functions. We have the following lemma where we achieve this approximation and a little more to guarantee periodicity with a pre-specified period.

\begin{lemma} \label{lemma approx}
Let $k$ and $N$ be two positive integer and $\a=(\a_0,\ldots,\a_{k-1})\in [0,1)^k$.  Consider a step function of the form 
\begin{align*}
\tilde{s}_{\a,N}:[0,1)\to \R\quad\text{ defined by}\quad \tilde{s}_{\a,N}(x)=\sum_{i=0}^{kN-1}\tilde{\a}_i\chi_{[\frac{i}{kN},\frac{i+1}{kN})}(x)
\end{align*}
Here $\tilde{\a}_i:=\a_j$ where $j:=i\mod k$. Then, given any $\e>0$ and $\d>0$, there exists a periodic real-analytic function $s_{\a,N}:\R\to\R$ satisfying the following properties:
\begin{enumerate}
\item Entirety: The complexification of $s_{\a,N}$ extends holomorphically to $\C$. 
\item Proximity criterion: $s_{\a,N}$ is $L^1$ close to $\tilde{s}_{\a,N}$. In fact we can say more, \begin{align}\label{nearness}
\sup_{x\in[0,1)\setminus F}|s_{\a,N}(x)-\tilde{s}_{\a,N}(x)|<\e,
\end{align}
where $F=\cup_{i=0}^{kN-1}I_i\subset [0,1)$ is a union of intervals centred around $\frac{i}{kN},\;i=1,\ldots, kN-1$ and $I_0=[0,\frac{\d}{2kN}]\cup[1-\frac{\d}{2kN},1)$ and $\l(I_i)=\frac{\d}{kN}\;\forall\; i$. 
\item Periodicity: $s_{\a,N}$ is $1/N$ periodic. More precisely, the complexification will satisfy,
\begin{align}\label{boundedness} 
s_{\a,N}(z+n/N)=s_{\a,N}(z)\qquad\forall\; z\in\C\text{ and }n\in\Z
\end{align}
\item Bounded derivative: The derivative is small outside a set of small measure,
\begin{align} \label{derivative bound}
\sup_{x\in[0,1)\setminus F}|s_{\a,N}'(x)|<\e 
\end{align}
\end{enumerate}
\end{lemma}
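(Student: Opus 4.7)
The plan is to reduce the problem to a single one-periodic approximation and then construct the approximant by convolution with the Fej\'er kernel. First observe that $\tilde{s}_{\a,N}(x) = \tilde{t}(Nx)$ where $\tilde{t}:\R\to\R$ is the one-periodic step function with $\tilde{t} = \a_i$ on $[i/k,(i+1)/k)$. Consequently, it suffices to construct a real-valued, entire, one-periodic function $t$ approximating $\tilde{t}$ in the analogues of items (2) and (4), and then set $s_{\a,N}(x) := t(Nx)$. Periodicity (item 3) and entirety (item 1) are then automatic from the substitution; the uniform error and derivative bounds translate to the original scale after absorbing a factor of $N$ into the choice of parameters. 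The target set $F$ falls out naturally since the jumps of $\tilde{s}_{\a,N}$ sit exactly at the points $i/(kN)$.

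To build $t$, I would take the Fej\'er convolution
\[
t(y) \; := \; (F_M * \tilde{t})(y), \qquad F_M(y) \; = \; \frac{1}{M+1}\left(\frac{\sin((M+1)\pi y)}{\sin(\pi y)}\right)^2,
\]
where $M$ is a large integer to be chosen and $*$ denotes convolution on $\R/\Z$. Since $\sin((M+1)\pi z)/\sin(\pi z)$ is the Dirichlet polynomial $\sum_{j=0}^{M}e^{i\pi(2j-M)z}$, the kernel $F_M$ is an entire one-periodic trigonometric polynomial of degree $M$; hence so is $t$, which is additionally real-valued because $F_M$ and $\tilde{t}$ are. This yields items (1) and (3) simultaneously.

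For the quantitative estimates, consider $y$ at distance at least $\d_0 := \d/(2k)$ from every jump point $i/k$. Using $\tilde{t}(y-z)=\tilde{t}(y)$ for $|z|<\d_0$ together with the standard bound $|\sin(\pi z)| \geq 2\|z\|$ on $[-1/2,1/2]$, a direct estimate gives
\[
|t(y)-\tilde{t}(y)| \; \leq \; 2\|\tilde{t}\|_\infty \int_{|z|>\d_0} F_M(z)\,dz \; \leq \; \frac{2k\|\tilde{t}\|_\infty}{(M+1)\d}.
\]
For the derivative, since $\tilde{t}'$ is distributionally $\sum_i(\a_i-\a_{i-1})\d_{i/k}$, convolution gives
\[
t'(y) \; = \; \sum_{i=0}^{k-1}(\a_i-\a_{i-1})\,F_M(y-i/k),
\]
and the same inequality bounds each $|F_M(y - i/k)|$ by $k^2/((M+1)\d^2)$ on the complement of $F$, so $|t'(y)| \leq Ck^3\|\tilde{t}\|_\infty/((M+1)\d^2)$. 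Since $s_{\a,N}'(x) = N\,t'(Nx)$, both bounds can be forced below $\e$ by choosing $M$ at least on the order of $Nk^3/(\e\d^2)$.

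The main obstacle I anticipate is the joint control of items (2) and (4) by a single choice of degree $M$: the uniform approximation decays like $1/(M\d)$ while the pointwise derivative decays only like $1/(M\d^2)$, so it is the derivative estimate that dictates the required size of $M$, and one must track the dependence on the geometric parameters $k$, $N$, $\d$ carefully. The remaining requirements, namely entirety, real-valuedness, and $1/N$-periodicity, are built into the Fej\'er construction and the substitution $y = Nx$, so no extra argument is needed there.
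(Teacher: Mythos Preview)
Your argument is correct and takes a genuinely different route from the paper. The paper first mollifies the step function with smooth cutoffs to obtain a $C^\infty$, $1/N$-periodic function $\tilde{\tilde{s}}_{\a,N}$ agreeing with $\tilde{s}_{\a,N}$ outside $F$, and then takes a high partial sum of its Fourier series; uniform convergence of Fourier series for smooth functions gives item (2), while items (1) and (3) follow because the approximant is a trigonometric polynomial in $e^{2\pi i N x}$. Notably, the paper explicitly \emph{does not} prove item (4) in its text, deferring to \cite{BaKu}.

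Your approach bypasses the smoothing step entirely by convolving the raw step function with the Fej\'er kernel, which already produces a trigonometric polynomial (hence items (1) and (3) at once) and, because the Fej\'er kernel has explicit pointwise decay $F_M(z)\le C/((M+1)\|z\|^2)$ away from the origin, yields quantitative bounds for both $|t-\tilde t|$ and $|t'|$ on the complement of the jump neighborhoods. In particular you obtain item (4) with the same choice of $M$, which the paper's Fourier-partial-sum argument cannot deliver without extra work (partial sums of smooth functions do converge in $C^1$, but tracking how large $m$ must be relative to the cutoff scale is less transparent). The cost of your approach is having to keep track of the constants in $k$, $N$, $\d$ through the rescaling $s_{\a,N}(x)=t(Nx)$, but you have done this correctly: the set $F$ corresponds exactly to $\{x:\dist(Nx,\tfrac{1}{k}\Z)<\d/(2k)\}$, and the factor of $N$ in $s_{\a,N}'$ is absorbed into the choice of $M$.
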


\begin{proof}
See \cite[Lemma 4.7]{Ba-Ns} and \cite[Lemma 3.6]{Kana}.
\end{proof}

Note that the condition \ref{boundedness} in particular implies 
\begin{align*}
\sup_{z: \text{Im}(z)<\rho}s_{\b,N}(z)<\infty\quad\forall\; \rho>0.
\end{align*}
Indeed, for any $\rho>0$, put $\Omega'_\rho=\{z=x+iy:x\in [0,1], |y|<\rho\}$ and note that entirety of $s_{\b,N}$ combined with compactness of $\overline{\Omega'_\rho}$ implies $\sup_{z\in\Omega'_\rho}|s_{\b,N}(z)|<C$ for some constant $C$. Periodicity of $s_{\a,N}$ in the real variable and the observation $\Omega_\rho=\cup_{n\in\Z}\left(\Omega'_\rho + n\right)$ implies that $\sup_{z\in\Omega_\rho}|s_{\b,N}(z)|<C$. We have essentially concluded that  $s_{\b,N}\in C^\omega_\infty(\T^1)$. 

Now we show that block-slide type of maps on $\T^2$ can be approximated well by entirely extendable real-analytic diffeomorphisms.

\begin{proposition} \label{proposition approximation}
Let $\mathfrak{h}:\T^2\to\T^2$ be a block-slide type of map which commutes with $\phi^{1/q}$ for some natural number $q$. Then for any $\e>0$ and $\d>0$ there exists a measure preserving real-analytic diffeomorphim $h\in\text{Diff }^{\omega}_\infty(\T^2,\mu)$ such that the following conditions are satisfied:
\begin{enumerate}
\item Proximity property: There exists a set $E\subset\T^2$ such that $\mu(E)<\d$ and $\sup_{x\in\T^2\setminus E}\|h(x)-\mathfrak{h}(x)\|<\e$. 
\item Commuting property: $h\circ\phi^{1/q}=\phi^{1/q}\circ h$
\end{enumerate} 
In this case we say the the diffeomorphism $h$ is $(\e,\d)$-close to the block-slide type map $\mathfrak{h}$. 
\end{proposition}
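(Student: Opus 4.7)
The plan is to decompose $\mathfrak{h}$ into its constituent block-slide factors, approximate each factor by an entirely-extendable real-analytic diffeomorphism via Lemma \ref{lemma approx}, and compose these to obtain $h$. By definition, $\mathfrak{h} = \mathfrak{h}^{(m)} \circ \cdots \circ \mathfrak{h}^{(1)}$, where each $\mathfrak{h}^{(j)}$ is of type $\mathfrak{h}_1$ or $\mathfrak{h}_2$ with an associated step function $s^{(j)}$. A direct computation shows that a factor of type $\mathfrak{h}_2$ commutes with $\phi^{1/q}$ automatically, while a factor of type $\mathfrak{h}_1$ commutes with $\phi^{1/q}$ iff $s^{(j)}$ is $1/q$-periodic. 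Refining the decomposition if necessary, I would arrange that each $\mathfrak{h}^{(j)}$ individually commutes with $\phi^{1/q}$.

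First I would apply Lemma \ref{lemma approx} to each step function $s^{(j)}$ to produce an entire periodic approximant $s_{\mathrm{an}}^{(j)} \in C^\omega_\infty(\T^2)$ that agrees with $s^{(j)}$ up to $\e_j$ outside an exceptional set $F_j$ of Lebesgue measure less than $\d_j$; for vertical-slide factors I would take the period $1/N_j$ to be a divisor of $1/q$, ensuring $s_{\mathrm{an}}^{(j)}$ remains $1/q$-periodic. Replacing $s^{(j)}$ by $s_{\mathrm{an}}^{(j)}$ in the formulas for $\mathfrak{h}_1,\mathfrak{h}_2$ produces an analytic map $h^{(j)}$ with unit Jacobian and analytic inverse (obtained by negating the step function), so $h^{(j)} \in \text{Diff }^\omega_\infty(\T^2,\mu)$. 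Because this space is closed under composition, $h := h^{(m)} \circ \cdots \circ h^{(1)} \in \text{Diff }^\omega_\infty(\T^2,\mu)$. The commutation $h \circ \phi^{1/q} = \phi^{1/q} \circ h$ then follows factor by factor from the periodicity of the $s_{\mathrm{an}}^{(j)}$.

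For the proximity statement, let $F_j^\ast \subset \T^2$ be $F_j \times [0,1)$ or $[0,1) \times F_j$ according to the type of $\mathfrak{h}^{(j)}$, so $\mu(F_j^\ast) < \d_j$. I would define
\[
E \;:=\; \bigcup_{j=1}^m \bigl(\mathfrak{h}^{(j-1)} \circ \cdots \circ \mathfrak{h}^{(1)}\bigr)^{-1}\bigl(F_j^\ast\bigr),
\]
so that by measure invariance $\mu(E) < \sum_j \d_j < \d$ once each $\d_j$ is chosen small. Outside $E$, each corresponding $h^{(j)}$ agrees with $\mathfrak{h}^{(j)}$ to within $\e_j$ uniformly; these perturbations telescope to $\|h(x) - \mathfrak{h}(x)\| < \e$ on $\T^2 \setminus E$ provided each $\e_j$ is chosen small enough.

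The main obstacle is controlling error propagation through the composition of \emph{discontinuous} maps: a naive uniform estimate fails near the jump loci of the step functions, so global Lipschitz control is unavailable. The exceptional sets $F_j$ of Lemma \ref{lemma approx} are engineered precisely to excise neighborhoods of these jump loci, and on their complements the analytic approximants are smooth with an easily controlled modulus of continuity. Pulling these excised strips back through finitely many measure-preserving maps keeps the total exceptional set of measure at most $\d$, and the telescoping estimate then yields the desired $(\e, \d)$-closeness.
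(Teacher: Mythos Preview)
Your proposal is correct and follows the same approach as the paper: approximate each elementary block-slide factor by an analytic shear via Lemma~\ref{lemma approx} and compose. The paper is in fact terser than you on the composition step, saying only that one ``take[s] a composition of the individual approximations and a union of all the component error sets and obtain[s] the desired result''; your explicit pullback definition of $E$ and the telescoping sketch go slightly beyond what the paper records.
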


\begin{proof}
See \cite[Proposition 2.22]{Ba-Ku}
\end{proof}

\subsection{Approximating partition permutation by diffeomorphisms}

Now we describe how partition permutations of $\T^2$ can be approximated well enough by real-analytic diffeomorphisms. At this juncture we point out that for the purpose of the AbC method we would like the approximating diffeomorphism constructed here to commute with $R^{1/q}$ for some given natural number $q$. In the smooth category this is achieved by carrying out the construction on a fundamental domain of $R^{1/q}$ in such a way that this map is identity near the boundary of this fundamental domain and then we glue together $q$ translated copies of this diffeomorphism and the resulting diffeomorphism commutes with $R^{1/q}$ by construction.

In the real-analytic category we do not know how to reproduce the result in a similar fashion and hence we do all construction on the whole of $\T^2$ rather than a fundamental domain of $\T^2$. The problem if we do the construction this way is that we have to keep track that of commutativity all along our construction.

We briefly describe how this construction can be achieved using notations from section \ref{section abstract AbC}. We fix natural numbers $k,s,q$ and $\Pi$ a permutation of $\xi_{kq}^s$ which commutes with $R^{1/q}$.

In \cite[section 5.1]{Ba-Ku} we proved the subsequent statement on the approximation of arbitrary permutations.

\begin{proposition}[\cite{Ba-Ku}, Theorem E] \label{permutation = block-slide}
Let $k,q,l \in \N$ and $\Pi$ be any permutation of $kql$ elements. We can naturally consider $\Pi$ to be a permutation of the partition $\mathcal{S}_{kq,l}$ of the torus $\T^2$. Assume that $\Pi$ commutes with $\phi^{1/q}$. Then $\Pi$ is a block-slide type of map.
\end{proposition}

We have the approximate real-analytic version of the above theorem as follows: 

\begin{theorem}\label{theorem approx}
Let $\Pi$ be any permutation of $k\times s $ rectangles which partitions $[0,1/q)\times\T^1$. In particular we can extend this $\pi$ to a permutation of $\xi_{kq}^s$ which commutes with $\phi^{1/q}$ (see proposition \ref{permutation = block-slide}). Then for any $\e>0$, there exists a diffeomorphism $h\in\text{Diff }^\omega_\infty(\T^2,\mu)$ such that for a set $L\subset \T^2$, the following conditions are satisfied:
\begin{enumerate}
\item $\mu(L)>1-\e$.
\item For any $x\in L\cap R$, $h(x)\in \Pi(R)$ for any $R\in\xi^s_q$.
\item $\phi^{1/q}\circ h = h\circ\phi^{1/q}$. 
\end{enumerate} 
We say that $h$ $\e$-approximates $\Pi$.
\end{theorem}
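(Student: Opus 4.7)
The proof combines the two preceding results: Proposition~\ref{permutation = block-slide} lets us write the partition permutation $\Pi$ as a block-slide type map $\mathfrak{h}$ on $\T^2$ commuting with $\phi^{1/q}$, and Proposition~\ref{proposition approximation} furnishes a real-analytic approximation of that block-slide map. The only genuine work beyond invoking these two propositions is to convert metric closeness into set-theoretic membership in the correct rectangle, which is done by sacrificing a thin neighborhood of the boundary of the grid.

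Concretely, I would first fix a threshold $\eta<\min\{1/(4kq),1/(4s)\}$. Let $\partial\xi$ denote the union of the boundaries of all atoms of $\xi^s_{kq}$, and write $B_r$ for its $r$-neighborhood in $\T^2$. Since $\partial\xi$ is a finite union of line segments, $\mu(B_r)\to 0$ as $r\to 0$, so I can pick $\e'\in(0,\eta)$ small enough that $\mu(B_{\e'})<\e/2$. Now apply Proposition~\ref{proposition approximation} to the block-slide representation $\mathfrak{h}$ of $\Pi$ (whose existence is guaranteed by Proposition~\ref{permutation = block-slide}) with proximity parameter $\e'$ and measure parameter $\d=\e/2$. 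This produces $h\in\text{Diff }^\omega_\infty(\T^2,\mu)$ and a set $E$ with $\mu(E)<\e/2$ such that $\sup_{x\in\T^2\setminus E}\|h(x)-\mathfrak{h}(x)\|<\e'$ and $h\circ\phi^{1/q}=\phi^{1/q}\circ h$.

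Set $L:=\T^2\setminus(E\cup B_{\e'})$. Then $\mu(L)>1-\e$, giving item (1), and (3) is immediate from the commuting property of $h$. For item (2), fix $R\in\xi^s_{kq}$ and $x\in L\cap R$. Since $x\notin B_{\e'}$, the point $x$ lies in the interior of $R$ at distance greater than $\e'$ from $\partial R$. Because $\mathfrak{h}$ is a finite composition of block-slide maps, its restriction to the interior of each atom of $\xi^s_{kq}$ is a rigid translation; hence $\mathfrak{h}(x)=\Pi(x)$ lies in the interior of $\Pi(R)$ at distance greater than $\e'$ from $\partial\Pi(R)$. Since $x\notin E$, we have $\|h(x)-\mathfrak{h}(x)\|<\e'$, and therefore $h(x)\in\Pi(R)$, as required.

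The only point demanding any care is the preliminary choice of $\e'$: it must be chosen small enough both relative to the atom dimensions of $\xi^s_{kq}$ (so that $\e'$-proximity of images forces them into the same rectangle) and small enough that the boundary strip $B_{\e'}$ has measure less than $\e/2$. Neither constraint is deep; once the parameters are ordered correctly, the theorem is a direct consequence of Propositions~\ref{permutation = block-slide} and~\ref{proposition approximation}.
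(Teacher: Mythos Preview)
Your approach is exactly the paper's: invoke Proposition~\ref{permutation = block-slide} to realize $\Pi$ as a block-slide map $\mathfrak{h}$, then Proposition~\ref{proposition approximation} to approximate it analytically. The paper's own proof is a single sentence to this effect, so your write-up is actually more detailed than the original.

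There is, however, one step in your argument that is not justified and is likely false as stated. You assert that ``its restriction to the interior of each atom of $\xi^s_{kq}$ is a rigid translation,'' and use this to carry the $\e'$-distance from $\partial R$ over to $\partial\Pi(R)$. But $\mathfrak{h}$ is a composition of many slides whose discontinuities need not lie on the grid $\partial\xi^s_{kq}$; indeed the transposition construction in the paper explicitly passes through the finer partition $\xi^{2s}_{kq}$ and through horizontal cuts at $y=1/2$, so $\mathfrak{h}$ restricted to a single atom $R$ is in general only a \emph{piecewise} translation onto $\Pi(R)$. A point deep inside $R$ can therefore land arbitrarily close to $\partial\Pi(R)$.

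The fix is immediate and costs nothing: define $L:=\T^2\setminus\bigl(E\cup \mathfrak{h}^{-1}(B_{\e'})\bigr)$ instead. Since $\mathfrak{h}$ is measure-preserving, $\mu(\mathfrak{h}^{-1}(B_{\e'}))=\mu(B_{\e'})<\e/2$, so item~(1) still holds. For $x\in L\cap R$ one has $\mathfrak{h}(x)\in\Pi(R)$ (because $\mathfrak{h}$ realizes $\Pi$ as a set map on atoms) and $\mathfrak{h}(x)\notin B_{\e'}$, hence $\mathrm{dist}(\mathfrak{h}(x),\partial\Pi(R))>\e'$; combined with $\|h(x)-\mathfrak{h}(x)\|<\e'$ this gives $h(x)\in\Pi(R)$. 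With this adjustment your proof is complete and matches the paper's intended argument.
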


\begin{proof}
Follows using proposition \ref{permutation = block-slide} followed by proposition \ref{proposition approximation}.  
\end{proof}

We note that the above theorem is the real-analytic counterpart of \cite[Theorem 35]{FW1} and it is valid for the torus only. This is essentially the main difference of our paper with the Foreman-Weiss work. We also remark that one can further improve upon the above result and obtain the real-analytic analogue of \cite[Theorem 1.2]{AK} and a reduced version of \cite[Theorem 1.1]{AK} without the boundary condition. Of course one already has real-analytic version of these theorems (Moser's theorem is true in the real-analytic category) but the complexification of such diffeomorphisms are not known to be entire in any sense and hence these are not compatible with the AbC method. So the above theorem and the aforementioned generalizations are the best we can hope for at the time. 

\section{Real-analytic AbC method}\label{section real-analytic AbC}

Our objective in this section is to produce examples of real-analytic diffeomorphisms using the AbC method. We will also show that these real-analytic AbC diffeomorphisms we construct here are isomorphic to the abstract AbC transformations constructed earlier.

\subsection{Real-analytic AbC method}

\begin{theorem}[Real-analytic untwisted AbC diffeomorphisms] \label{theorem analytic AbC}
Fix a number $\rho>0$. Suppose $T:\T^2\to\T^2$ is a measure preserving transformation built by the abstract AbC method using parameter sequences $\{k_n\}_{n=1}^\infty$ and $ \{l_n\}_{n=1}^\infty$. 

Then if $ \{l_n\}_{n=1}^\infty$ is a sequence of numbers which grows fast enough (see \ref{fast enough}), there exists a diffeomorphism $T^{(\mathfrak{a})}\in\text{Diff }^\omega_\rho(\T^2,\mu)$  which is measure theoretically isomorphic to $T$.
\end{theorem}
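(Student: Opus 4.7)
The plan is to mimic the abstract AbC construction of $T$ by replacing each combinatorial permutation $h_n$ with a real-analytic diffeomorphism $h_n^{(\mathfrak{a})} \in \text{Diff }^\omega_\infty(\T^2,\mu)$ supplied by Theorem \ref{theorem approx}, and then to choose the AbC parameter $l_n$ at stage $n$ large enough to force $d_\rho$-convergence of the analytic conjugates. Concretely, proceed inductively: assume $h_1^{(\mathfrak{a})},\dots,h_n^{(\mathfrak{a})}$ have been produced with the right commutation and approximation properties, and set $H_n^{(\mathfrak{a})} = h_1^{(\mathfrak{a})} \circ \cdots \circ h_n^{(\mathfrak{a})}$ and $T_n^{(\mathfrak{a})} = H_n^{(\mathfrak{a})} \circ R^{\alpha_n} \circ (H_n^{(\mathfrak{a})})^{-1}$. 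At stage $n+1$, apply Theorem \ref{theorem approx} to the combinatorial $h_{n+1}$ with approximation quality $(\e_{n+1},\d_{n+1})$ (to be chosen small) to obtain $h_{n+1}^{(\mathfrak{a})}$ which commutes exactly with $R^{1/q_n}$.

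The key calculation for analytic convergence uses $\alpha_{n+1} - \alpha_n = 1/q_{n+1}$ and the fact that $h_{n+1}^{(\mathfrak{a})}$ commutes with $R^{\alpha_n}$ (since $\alpha_n \in \tfrac{1}{q_n}\Z$). Hence
\[
T_{n+1}^{(\mathfrak{a})} = H_n^{(\mathfrak{a})} \circ R^{\alpha_n} \circ \big[h_{n+1}^{(\mathfrak{a})} \circ R^{1/q_{n+1}} \circ (h_{n+1}^{(\mathfrak{a})})^{-1}\big] \circ (H_n^{(\mathfrak{a})})^{-1},
\]
while $T_n^{(\mathfrak{a})} = H_n^{(\mathfrak{a})} \circ R^{\alpha_n} \circ (H_n^{(\mathfrak{a})})^{-1}$. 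By a complex mean value estimate together with Cauchy's inequality applied on an auxiliary strip $\Omega_{\rho'}$ with $\rho' > \rho$ (available since $h_{n+1}^{(\mathfrak{a})} \in C^\omega_\infty$), the bracketed commutator is close to the identity in $\|\cdot\|_\rho$ at rate $C(\rho,\rho') \, \|h_{n+1}^{(\mathfrak{a})}\|_{\rho'}/q_{n+1}$. Composing with the previously fixed $H_n^{(\mathfrak{a})}$ and $R^{\alpha_n}$ only multiplies by a further level-$n$ constant. Since $q_{n+1} = k_n l_n q_n^2$ and $\|h_{n+1}^{(\mathfrak{a})}\|_{\rho'}$ is finite, choosing $l_n$ large enough forces $d_\rho(T_n^{(\mathfrak{a})}, T_{n+1}^{(\mathfrak{a})}) < 2^{-n}$; this is condition (\ref{fast enough}). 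Completeness of the Polish space $\text{Diff }^\omega_\rho(\T^2,\mu)$ then produces the limit $T^{(\mathfrak{a})}$.

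For the isomorphism between $T$ and $T^{(\mathfrak{a})}$, set $K_n = H_n^{(\mathfrak{a})} \circ H_n^{-1}$, a measure preserving bijection of $\T^2$ satisfying $K_n T_n K_n^{-1} = T_n^{(\mathfrak{a})}$. Use the generating sequence $\mathcal{P}_n = \zeta_n = H_n(\xi_n)$ (generating by Requirement 1 and $q_n \to \infty$). Then $K_n(\mathcal{P}_n) = H_n^{(\mathfrak{a})}(\xi_n)$ is generating on the analytic side, and
\[
K_{n+1}(\mathcal{P}_n) = H_n^{(\mathfrak{a})} \circ h_{n+1}^{(\mathfrak{a})} \circ h_{n+1}^{-1}(\xi_n).
\]
Because $h_{n+1}^{(\mathfrak{a})}$ agrees with $h_{n+1}$ outside a set of measure $<\d_{n+1}$ and $H_n^{(\mathfrak{a})}$ is measure preserving, one has $D_\mu(K_n(\mathcal{P}_n), K_{n+1}(\mathcal{P}_n)) = O(\d_{n+1})$. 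Taking $\{\d_n\}$ summable, Lemma \ref{lemma isomorphism} delivers the desired measure theoretic isomorphism as the weak limit of $\{K_n\}$, since both $T_n \to T$ and $T_n^{(\mathfrak{a})} \to T^{(\mathfrak{a})}$ weakly.

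The crux, and main technical obstacle, is the interplay between the two families of smallness parameters. The approximation data $(\e_{n+1}, \d_{n+1})$ governing how closely $h_{n+1}^{(\mathfrak{a})}$ resembles the combinatorial $h_{n+1}$ must be selected first, since they drive the isomorphism argument, and tightening them tends to inflate the complex norm $\|h_{n+1}^{(\mathfrak{a})}\|_{\rho'}$—the approximation in Proposition \ref{proposition approximation} is built from partial Fourier sums whose degree grows as we demand sharper approximation on $\T^2$. Only \emph{after} this choice do we pick $l_n$, large enough that $q_{n+1}^{-1}\|h_{n+1}^{(\mathfrak{a})}\|_{\rho'}$ is summable. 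This staged ordering—approximation quality first, then analytic norm, then $l_n$—is precisely the real-analytic AbC growth condition (\ref{fast enough}), and embodies the single genuine difficulty in passing from the smooth Foreman--Weiss construction to the real-analytic one.
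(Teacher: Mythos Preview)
Your overall architecture matches the paper's: approximate each $h_n$ by $h_n^{(\mathfrak{a})}\in\text{Diff }^\omega_\infty$ via Theorem~\ref{theorem approx}, use the commutation with $R^{1/q_n}$ to write $T_{n+1}^{(\mathfrak{a})}$ as a small perturbation of $T_n^{(\mathfrak{a})}$, pass to a larger strip $\Omega_{\rho'}$ to get the $d_\rho$-estimate, and then invoke Lemma~\ref{lemma isomorphism} with $K_n=H_n^{(\mathfrak{a})}\circ H_n^{-1}$. The convergence argument and the $D_\mu(K_n(\mathcal{P}_n),K_{n+1}(\mathcal{P}_n))$ estimate are essentially the paper's, and your discussion of the staged choice of parameters is exactly the content of the ``fast enough'' condition.

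There is, however, a genuine gap. You assert that $K_n(\mathcal{P}_n)=H_n^{(\mathfrak{a})}(\xi_n)$ is a generating sequence of partitions, but this is not automatic and is in fact the main technical work in the paper's proof. On the abstract side $\zeta_n=H_n(\xi_n)$ is generating only because each $h_m$ is a \emph{permutation} of $\xi_n$, so $\zeta_n$ coincides with $\xi_n$ as a set partition. The analytic $h_m^{(\mathfrak{a})}$ are not permutations of $\xi_n$; they only send most of each atom into the correct target atom. Since the diffeomorphism $H_n^{(\mathfrak{a})}$ changes with $n$, one cannot simply argue that a homeomorphic image of a generating sequence is generating. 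The paper handles this by introducing the sets $G_n=L_n\cap\bigcap_{m>n}h_m^{(\mathfrak{a})}\circ\cdots\circ h_{n+1}^{(\mathfrak{a})}(L_m)$, using Borel--Cantelli to get $\mu(G_n)\nearrow 1$, and then showing that on $G_n$ the maps $(h_m^{(\mathfrak{a})}\circ\cdots\circ h_{n_0+1}^{(\mathfrak{a})})^{-1}$ and $(h_m\circ\cdots\circ h_{n_0+1})^{-1}$ agree at the level of $\xi_n$-atoms; this lets one transfer the approximation of an arbitrary measurable $D$ from the permutation picture to the analytic one. Without this step, hypothesis~(2) of Lemma~\ref{lemma isomorphism} is unverified and the isomorphism conclusion does not follow.
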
 

\begin{proof}
Fix $\{\e_n\}_{n=1}^\infty$ such that $\sum_{n=1}^\infty\e_n<\infty$. Let $\{h_n\}_{n=1}^\infty, \{H_n=h_1\circ\ldots\circ h_n\}_{n=1}^\infty$ and $\{T_n=H_n\circ R^{\a_n}\circ H_n^{-1}\}_{n=1}^\infty$ be a sequence of transformations constructed using parameters $\{k_n\}_{n=1}^\infty$ and $\{l_n\}_{n=1}^\infty$ via the abstract AbC method.

Using theorem \ref{theorem approx} we construct diffeomorphisms $h_n^{(\mathfrak{a})}\in\text{Diff }^\omega_\infty(\T^2,\mu)$ such that $h_n^{(\mathfrak{a})}$ $\e_n$-approximates $h_n$. We put 
\begin{align}
H_n^{(\mathfrak{a})}:=h_1^{(\mathfrak{a})}\circ\ldots\circ h_n^{(\mathfrak{a})}\qquad\qquad T_n^{(\mathfrak{a})}:=H_n^{(\mathfrak{a})}\circ R^{\a_n}\circ (H_n^{(\mathfrak{a})})^{-1}
\end{align}
Now we make the following observation regarding proximity exploiting the commutation relation:
\begin{align*}
& d_\rho\big(T_{n+1}^{(\mathfrak{a})},T_n^{(\mathfrak{a})}\big)\\
&\qquad =d_\rho\big(H_{n}^{(\mathfrak{a})}\circ R^{\a_n}\circ [h_{n+1}^{(\mathfrak{a})}\circ R^{1/(k_nl_nq_n^2)}\circ (h_{n+1}^{(\mathfrak{a})})^{-1}]\circ (H_n^{(\mathfrak{a})})^{-1}, H_n^{(\mathfrak{a})}\circ R^{\a_n}\circ (H_n^{(\mathfrak{a})})^{-1}\big)
\end{align*}
Recall that $l_n$ is chosen last in the induction step. So, if one choses $l_n$ to be a large enough natural number then from the continuity of $d_\rho$ with respect to composition we obtain $d_\rho(T_{n+1}^{(\mathfrak{a})},T_n^{(\mathfrak{a})})<\e_n$. Since we are dealing with real-analytic functions which are often more delicate than smooth functions, we make some more observations to justify this claim. Note that since $(H_{n+1}^{(\mathfrak{a})})^{-1}\in \text{Diff }^{\omega}_\infty(\T^2,\mu)$, we can choose some $\rho'>\rho$ such that $(H_{n+1}^{(\mathfrak{a})})^{-1}(\Omega_{\rho})\subset\Omega_{\rho'}$. For any $x\in \Omega_\rho$ we put   $y=(H_{n+1}^{(\mathfrak{a})})^{-1}(x)$. So $y,\; R^{1/(k_nl_nq_n^2)}(y) \in \Omega_{\rho'}$. Also note that $H_{n+1}^{(\mathfrak{a})}\circ R^{\a_n}\in  \text{Diff }^{\omega}_\infty(\T^2,\mu)$ and any function in $ \text{Diff }^{\omega}_\infty(\T^2,\mu)$ is uniformly continuous on $\Omega_{\rho'}$. \footnote{ Indeed, if $ F\in  \text{Diff }^{\omega}_\infty(\T^2,\mu)$, then since $F$ is $\Z^2$ periodic, we have for any $\rho''>0 $, $u,v \in\{z=(z_1,z_2)\in\C^2: \text{Re}(z_i)\in [0,1], \; \text{Im}(z_i)\leq \rho''\}$ and $\e>0$, there exists a $\d>0$ independent of $u,v$ such that $\|u-v\|<\d\Rightarrow \|F(u)-F(v)\|<\e$ (this follows from the compactness of the domain). In other words $F$ is uniformly continuous on this restricted domain. For arbitrary $u,v\in\Omega_{\rho''}$, is $\delta$ is small, we can find integers $n_1,n_2, m_1,m_2$ such that $u-(n_1,n_2),v-(m_1,m_2)\in \{z=(z_1,z_2)\in\C^2: \text{Re}(z_i)\in [0,1], \; \text{Im}(z_i)\leq \rho''\}$. And since $F$ is $\Z^2$ periodic, $\|F(u)-F(v)\|= \|F(u-(n_1,n_2))-F(v-(m_1,m_2))\|$. } This implies 
\begin{align}\label{remaark 39}
\tilde{d}_{\rho}(T_{n+1}^{(\mathfrak{a})},T_n^{(\mathfrak{a})}) \leq &\;  \tilde{d}_{\rho'}(H_{n+1}^{(\mathfrak{a})}\circ R^{\a_n}\circ R^{1/(k_nl_nq_n^2)}, H_{n+1}^{(\mathfrak{a})}\circ R^{\a_n})\nonumber\\
\leq  & \sup_{y\in\Omega_{\rho'}}\|H_{n+1}^{(\mathfrak{a})}\circ R^{\a_n}( R^{1/(k_nl_nq_n^2)}(y)), H_{n+1}^{(\mathfrak{a})}\circ R^{\a_n}(y)\|\nonumber\\
< & \; \e_n/4
\end{align}
for $l_n$ sufficiently large. A similar consideration with the inverse gives us the result for $d_\rho$. Hence the sequence $T_n^{(\mathfrak{a})}$ is Cauchy and converges to some $T^{(\mathfrak{a})}\in \text{Diff }^\omega_\rho(\T^2,\mu)$.

Next we need to prove that $T^{(\mathfrak{a})}$ is in fact measure theoretically isomorphic to $T$. Our plan is to use lemma \ref{lemma isomorphism} for the proof.

In the language of the lemma, we put $(\mathbb{X},\mathcal{B},m)=(\mathbb{X}',\mathcal{B}',m')=(\T^2,\mathcal{B},\mu)$. Next we define $K_n$. We put 
\begin{align}
K_n:\T^2\to\T^2\qquad\text{defined by}\qquad K_n:= H_n^{(\mathfrak{a})}\circ H_n^{-1}
\end{align}
From the definition it follows that $K_n$ is a isomorphism between $T_n$ and $T_n^{(\mathfrak{a})}$. 

We define the two sequences of partitions $\mathcal{P}_n:=\zeta_n=H_n(\xi_n)$ and $\mathcal{P}_n':=K_n(\zeta_n)=H_n^{(\mathfrak{a})}(\xi_n)$ and observe that using lemma \ref{lemma generating 1} we can conclude that $\{\mathcal{P}_n\}^{\infty}_{n=1}$ is generating. We have to work a little to show that $\mathcal{P}_n'$ is generating.

We define the set $L_{n}$ to be the set of measure more than $1-\e_n$ corresponding to $h_n$ as per Theorem \ref{theorem approx}. Consider the following sequence of sets:
\begin{align*}
G_n\coloneqq L_n\cap \bigcap_{m=n+1}^{\infty} (h_{n+1}^{(\mathfrak{a})}\circ\ldots\circ h_{m}^{(\mathfrak{a})})^{-1}(L_m)
\end{align*}
Note that $G_n$ is an increasing sequence and the Borel-Cantelli lemma guarantees that $\mu(G_n)\nearrow 1$. 

We pick a measurable $D\subset \T^2$ and $\d>0$. There exists some $n_0$ such that $\mu(G_m)>1-\frac{\d}{2}$ for all $m>n_0$. 

We put $D'=(H^{(\mathfrak{a})}_{n_0})^{-1}(D)$ and since $\{\xi_n\}^{\infty}_{n=1}$ is a generating sequence, there exists an $m>n_0$ and a collection $\mathcal{C}_m'\subset \xi_m$ such that 
\begin{align*}
& \mu\big((\bigcup_{C\in\mathcal{C}_m'} C)\triangle D'\big)<\d/2\\
\Rightarrow & \mu\big((\bigcup_{C\in\mathcal{C}_m'} H_{n_0}^{(\mathfrak{a})}(C))\triangle D\big)<\d/2
\end{align*}
On the other hand, note that for any $m>n_0$
\begin{align*}
& \mu\big(\bigcup_{R\in\xi_m}  h_{n_0+1}^{(\mathfrak{a})}\circ\ldots\circ h_{m}^{(\mathfrak{a})}(R) \;\triangle\; h_{n_0+1}\circ\ldots\circ h_m(R)\big)<\delta/2\\
\Rightarrow \; & \mu\big(\bigcup_{R\in\xi_m} (H_{n_0}^{(\mathfrak{a})}\circ h_{n_0+1}^{(\mathfrak{a})}\circ\ldots\circ h_{m}^{(\mathfrak{a})}(R) \;\triangle\; H_{n_0}^{(\mathfrak{a})}\circ h_{n_0+1}\circ\ldots\circ h_{m}(R)\big)<\delta/2\\
\Rightarrow \; & \mu\big(\bigcup_{R\in\xi_m} (H_{m}^{(\mathfrak{a})} (R) \;\triangle\; H_{n_0}^{(\mathfrak{a})}\circ h_{n_0+1}\circ\ldots\circ h_{m}(R)\big)<\delta/2\\
\Rightarrow \; & \mu\big(\bigcup_{R\in\xi_m} (H_{m}^{(\mathfrak{a})} (R) \;\triangle\; H_{n_0}^{(\mathfrak{a})} (\Pi(R))\big)<\delta/2
\end{align*}
Where $\Pi\coloneqq h_{n_0+1}\circ\ldots\circ h_{m}$ is a permutation of $\xi_m$. So, 
\begin{align*}
& \mu\big(\bigcup_{R\in\Pi^{-1}(\mathcal{C}_m')} H_{m}^{(\mathfrak{a})} (R) \;\triangle\; D\big) \\
\leq & \; \mu\big(\bigcup_{R\in\Pi^{-1}(\mathcal{C}_m')} H_{m}^{(\mathfrak{a})} (R) \;\triangle\; H_{n_0}^{(\mathfrak{a})} (\Pi(R))\big) + \mu\big(\bigcup_{R\in\Pi^{-1}(\mathcal{C}_m')} H_{n_0}^{(\mathfrak{a})} (\Pi(R)) \;\triangle\; D\big)\\
\leq & \; \mu\big(\bigcup_{R\in\xi_m} H_{m}^{(\mathfrak{a})} (R) \;\triangle\; H_{n_0}^{(\mathfrak{a})} (\Pi(R))\big) + \mu\big(\bigcup_{C\in\mathcal{C}_m'} H_{n_0}^{(\mathfrak{a})} (C) \;\triangle\; D\big)\\
\leq & \; \d/2 +\d/2 \\
= & \; \d
\end{align*}
This shows that $\mathcal{P}_n'$ is a generating sequence of partitions.

Our next objective is to show that $D_\mu(K_{n+1}(\mathcal{P}_n),K_n(\mathcal{P}_n))<\e_n$. So we do the following computations:
\begin{align*}
K_n(\mathcal{P}_n)= & \; K_n(\zeta_n)
=  \; H_n^{(\mathfrak{a})}\circ H_n^{-1}(H_n(\xi_n))
=  \; H_n^{(\mathfrak{a})}(\xi_n)
=  \; H_{n}^{(\mathfrak{a})}\circ h_{n+1}\circ h_{n+1}^{-1}(\xi_n)
\end{align*}
On the other hand, 
\begin{align*}
K_{n+1}(\mathcal{P}_n)= & \; K_{n+1}(\zeta_n)
= \; H_{n+1}^{(\mathfrak{a})}\circ H_{n+1}^{-1}(H_n(\xi_n))
= \; H_{n}^{(\mathfrak{a})}\circ h_{n+1}^{(\mathfrak{a})}\circ h_{n+1}^{-1}(\xi_n)
\end{align*}
Put $\mathcal{Q}_n\coloneqq h_{n+1}^{-1}(\xi_n)$ and note that by construction $h_{n+1}^{(\mathfrak{a})}(\mathcal{Q}_n)$ approximates $h_{n+1}(\mathcal{Q}_n)$ and we are done.
\end{proof}

\subsection{Fast enough in the real-analytic context}

We investigate what it means to be \emph{fast enough} in theorem \ref{theorem analytic AbC}. We fix a sequence $\{\e_n\}_{n=1}^\infty$ and assume that for any $n\in \N$:
\begin{align} \label{eq:epsdecrease}
    \e_n/4>\sum_{m=n}^\infty \e_m
\end{align}

For each choice of sequences $\{k_n\}_{m=1}^{n}$, $\{l_n\}_{m=1}^{n-1}$ and $\{s_n\}_{m=1}^{n+1}$ of natural numbers, we can have finitely many permutations of $\xi_{k_nq_n}^{s_{n+1}}$ and hence finitely many choices of $h_{n+1}$. For each such choice, there exists a natural number $l_n:=l_n(h_{n+1}, \{k_n\}_{m=1}^{n},\{l_n\}_{m=1}^{n-1},\{s_n\}_{m=1}^{n+1}, \rho)$ such that for any $l\geq l_n$, we can choose $h_{n+1}^{(\mathfrak{a})}$ such that 
\begin{align}\label{equation 26}
d_\rho(T_n^{(\mathfrak{a})},T_{n+1}^{(\mathfrak{a})})<\e_n/4
\end{align}
Finally to get an uniform estimate, we put 
\begin{align}
l_n^*=l_n^*(\{k_n\}_{m=1}^{n},\{l_n\}_{m=1}^{n-1},\{s_n\}_{m=1}^{n+1}, \rho)\coloneqq \max_{h_{n+1}}l_n(h_{n+1}, \{k_n\}_{m=1}^{n},\{l_n\}_{m=1}^{n-1},\{s_n\}_{m=1}^{n+1}, \rho)
\end{align}
Now we note that for our construction there is a relation $s_{n}=s_{n-1}^{k_{n-1}}$. We can define $b_n=b_n(\{k_n\}_{m=1}^{n-1})$ such that $s_{n}<b_n$. Thus we can have our sequence $l_n^*$ depend only on $k_n$ s (after speeding up a little if needed).

In conclusion we obtain a sequence of natural numbers 
\begin{align}
l_n^*=l_n^*(\{k_n\}_{m=1}^{n},\{l_n\}_{m=1}^{n-1},\rho)
\end{align}
and we say a sequence of natural numbers $\{l_n\}_{n=1}^\infty$ grows \emph{fast enough} if 
\begin{align}\label{fast enough}
l_n\geq l_n^*\qquad \forall\; n.  
\end{align}

\section{Symbolic representation of AbC systems}

The purpose of this section is establish the main result where a symbolic representation of untwisted real-analytic AbC diffeomorphisms is obtained. This section is almost identical to \cite[section 7]{FW1} though our presentation is much succinct.

We begin this section with the intrinsic description of obtaining a symbolic representation of the factor of a periodic process using construction sequences.

\subsection{Symbolic representation of periodic processes}

Here we exhibit how a periodic process can be viewed as a symbolic system. Everything in this section is done on a standard measure space $(\mathbb{X},\mathcal{B},m)$.

Let $\{\e_n\}_{n\in\N}$ be a summable sequence of positive numbers. Let $\{(\tau_n,\mathcal{P}_n)\}$ be a sequence of periodic processes converging to some transformation $T$. We assume that the height of all the towers of $(\tau_n,\mathcal{P}_n)$ is $q_n$ and $(\tau_{n+1},\mathcal{P}_{n+1})$ $\e_n$-approximates $(\tau_n,\mathcal{P}_n)$ with error set $D_n$. In addition we can assume (after removing some steps in the beginning if needed) that $\mu(\cup D_n)<1/2$ and also assume $q_0=1$. 

We put $G_n=\mathbb{X}\setminus \cup_{m\geq n}D_n$. Then note that $\{G_n\}_{n\in\N}$ is an increasing sequence of sets with $\mu(G_n)\nearrow 1$. Also, when restricted to $G_n$, $\{\mathcal{P}_m\}_{m\geq n}$ is a decreasing sequence of partitions.  For each tower of $(\tau_n,\mathcal{P}_n)$, the measure of the intersection of $G_n$ with each level of this tower are all the same.

Let $\{\mathcal{T}_i\}_{i=0}^{s_0-1}$ be the towers of $(\tau_0,\mathcal{P}_0\}$. Since $q_0=1$, each $\mathcal{T}_i$ contains a single set and we define $\mathcal{Q}_0$ to be the collection of all these sets. 

We inductively define two sequence of sets $\{B_n\}_{n\in\N}$ and $\{E_n\}_{n\in\N}$. Let $B_0=E_0=\emptyset$. Next we assume that we have defined $\{B_m\}_{m=0}^{n}$ and $\{E_m\}_{m=0}^{n}$. \\
At the $n+1$ th stage of the induction process we have to define $B_{n+1}$ and $E_{n+1}$. Since $\tau_{n+1}$ $\e_n$-approximates $(\tau_n,\mathcal{P}_n)$, we note that each tower of $(\tau_{n+1},\mathcal{P}_{n+1})$ when restricted to $G_{n+1}$ contains 
\begin{enumerate}
\item Contiguous levels: Contiguous sequences of levels of length $q_n$ contained in towers of $(\tau_n,\mathcal{P}_n)$ 
\item Interspersed levels: Levels of the towers of $(\tau_{n+1},\mathcal{P}_{n+1})$ intersected with $G_{n+1}$ not in the above contiguous sequences.
\end{enumerate}
Next we divide each of the maximal contiguous portions of the interspersed levels into two contiguous portions arbitrarily. We define $E_{n+1}$ to be the union of $E_n$  and the subcollection of levels that comes first and $B_{n+1}$ to be the union of $B_n$ and the subcollection that comes second. Also we note the top and the bottom contiguous subcollection of interspersed levels are joined together and is considered to be a single contiguous subcollection for this process. \\
This completes the construction of the two sequences and we note that $G_n = \mathcal{Q}_0\cup B_n\cup E_n$.  

Our next goal is to find a symbolic representation of a \emph{factor} of the limiting transformation $T$ arising from the partition $\mathcal{Q}_0\cup\{\cup_{n\in\N} B_n,\cup_{n\in\N} E_n\}$. We do this in two ways: First we describe the standard procedure using $T$ and then we describe the procedure that uses the aforementioned sequences and construction sequence description of symbolic systems.

Let $\Sigma$ be an alphabet of size $s_0$. Let $b$ and $e$ be two letters not contained in $\Sigma$. The letters  in $\Sigma\coloneqq\{a_i\}_{i=0}^{s_0-1}$ are considered to be indexes for the elements of $\mathcal{Q}_0\coloneqq \{A_i\}_{i=0}^{s_0-1}$. Our symbolic systems is built on the alphabet $\Sigma\cup\{b,e\}$. We define a factor map 
\begin{align}
K:\mathbb{X}\to (\Sigma\cup\{b,e\})^\Z\qquad\text{defined by}\qquad K(x)(i)\coloneqq \begin{cases} a_j & \text{if }\quad T^i(x)\in A_j\\ b& \text{if }\quad T^i(x)\in \cup_{n\in\N}B_n\\ e & \text{if }\quad T^i(x)\in \cup_{n\in\N}E_n\end{cases}
\end{align}   

We give an alternate description of this symbolic system using construction sequences. We inductively define $\mathcal{W}_n$ which are collections of words  of length $q_n$  and surjections $K_n:\{\mathcal{T}\cap G_n:\mathcal{T}$ is a tower of $(\tau_n,\mathcal{P}_n)$ and $G_n\cap\mathcal{T}\neq \emptyset\}\to\mathcal{W}_n$.

First we put $\mathcal{W}_0\coloneqq \Sigma$ and we assume that we have carried out the construction upto the $n$ th stage. At the $n+1$ th stage we define the collection of words $\mathcal{W}_{n+1}$ and $K_{n+1}$. Let $\mathcal{T}$ be a tower of $(\tau_{n+1},\mathcal{P}_{n+1})$. Then with $\mathcal{T}\cap G_{n+1}$ we associate a word $w$ of length $q_{n+1}$ satisfying:
\begin{enumerate}
\item $w(j)=v$ if the $j$-th level of $\mathcal{T}$ is a subset of the  $k$-th level of of a tower $\mathcal{S}$ of $(\tau_n,\mathcal{P}_n)$ and the $k$-th letter of $K_n(\mathcal{S})$ is $v$.
\item $w(j)=b$ if the $j$-th level of $\mathcal{T}$ is a subset of $B_{n+1}\setminus B_n$.  
\item $w(j)=e$ if the $j$-th level of $\mathcal{T}$ is a subset of $E_{n+1}\setminus E_n$.
\end{enumerate}
We define $\mathbb{K}$ to be the collection of all $x\in(\Sigma\cup\{b,e\})^\Z$ such that every contiguous subword of $x$ is a contiguous subword of some $w\in\mathcal{W}_n$ for some $n$. Then $\mathbb{K}$ is a closed shift invariant set that constitutes the support of $K^*\mu$ and it is the required symbolic representation of the factor of $T$ described explicitly earlier (see lemma \ref{lemma 11}). 

We note that in the case of the untwisted AbC transformations we consider satisfying requirements 1, 2 and 3 (see section \ref{section requirements}), $\mathcal{Q}$ will generate the transformation and the resulting symbolic representation will be isomorphic to $T$.

\subsection{Comparison of two periodic processes using a transect - I}\label{section transects 1}

Instead of directly comparing two subsequent periodic processes in the AbC method, we start our study slowly with the study of two periodic processes on the circle. This study will shed light on how a tower in the periodic process at the $n$-th stage of the AbC method compares with a tower at the $n+1$-th stage. In fact we wish to study how the levels of a periodic process on the circle traverses the levels of another periodic process which it $\e$-approximates. The parameters for the processes are chosen similar to the AbC method.

Let $p,q$ be two natural numbers and $\a=p/q$. $\mathcal{I}_q\coloneqq \{I^q_i\coloneqq [i/q,(i+1)/q)\}_{i=0}^{q-1}$ is the standard partition of $[0,1)$ into $q$ equal half open intervals. Recall that the atoms of $\mathcal{I}_q$ have two natural orderings:
\begin{enumerate}
\item The \emph{geometric ordering} is the natural ordering of the intervals i.e. $I_0^q<I_1^q<\ldots<I_{q-1}^q$.
\item The \emph{dynamical ordering} is the ordering that comes from iteration by $R_\a$ i.e. $I_0^q < R^\a(I_0^q)<\ldots<(R^\a)^{q-1}(I_0^q)$.
\end{enumerate}
Recall that with $j_i=(p)^{-1}i \mod q$, the $i$-th interval in the geometric ordering is the $j_i$ th interval in the dynamical ordering. Indeed if $(R^\a)^{j_i}(I_0^q)=I_{i'}^q$, then $pj_i\equiv i'$.

Let $\a=p/q$ and $\a'=p'/q'$ where $p,p',q$ and $q'$ are natural numbers and $q'=klq^2$. We compare the two periodic processes $(R^\a,\mathcal{I}_q)$ and $(R^{\a'},\mathcal{I}_{q'})$.

So if $J$ is a subinterval of $I_t^q$, and if $J$ is not the last subinterval of $I_t^q$ then $R^{\a'}(J)$ is a subinterval of the $j_i$th subinterval in the dynamical ordering i.e. $R^{\a}(I_t^q)$. If $J$ is the last subinterval, then $R^{\a'}(J)$ is geometrically the first subinterval of the $j_{i+1}$-th subinterval in the dynamical ordering.
    
With the above observation in mind, we make detailed analysis of how the interval $J\coloneqq [0,1/q')$ traverses the levels of the tower of $(R^\a,\mathcal{I}_q)$ under the action of $R^{\a'}$. Note that we have to study $q'$ iterates of $R^{\a'}$ to get a complete picture. So we divide the set $\{0,1,\ldots,q'=klq^2\}$ into contiguous portions of size $q$. 

On the first portion i.e. for $n=0,1,\ldots, klq-1$, $(R^{\a'})^n(J)\subset (R^{\a})^n(I_0^q)$ and $(R^{\a'})^{klq}(J)$ is geometrically the first subinterval of $I_1^q$. 

More generally when we study the iterations of $J$ for $n=mklq,\ldots, (m+1)klq$, we see that the iterations exhibit the following three types of behavior:
\begin{enumerate}
\item The \emph{beginning interval} $[mklq,\ldots,mklq+q-j_m)$: This is of length $q-j_m$. Note that $(R^{\a'})^{mkql}(J)$ is geometrically the first subinterval of $I_m^q$. Then with $n$ in the beginning interval, $(R^{\a'})^n(J)$ traverses the interval in places $j_m,j_m+1,\ldots,q-1$ in the dynamical ordering of $(R^\a,\mathcal{I}_q)$.
\item The \emph{middle  interval} $[mkql+q-j_m, mkql+q-j_m+(kl-1)q)$: This is of length $klq-q$. With $n$ in the middle interval, $(R^{\a'})^n(J)$ traverses the intervals following the dynamical ordering of $(R^\a,\mathcal{I}_q)$ starting from $I_0^q$.
\item The \emph{end interval} $[mkql+q-j_m+(kl-1)q, (m+1)kql)$: This has length $j_m$. With $n$ in the end interval, $(R^{\a'})^n(J)$ traverses the interval in places $0,1,\ldots,j_m-1$ in the dynamical ordering of $(R^\a,\mathcal{I}_q)$. $(R^{\a'})^{mklq-1}(J)$ is geometrically the last subinterval of $I^q_m$.
\end{enumerate}    

\subsection{Comparison of two periodic processes using a transect - II}\label{section transects 2}

The previous section did shed some light on how two periodic processes compare on the circle when one $\e$ approximates the other but we note that in the previous section we did not pay much attention to the fact that the AbC method uses the partition whose projection is $\mathcal{I}_{kq}$ and not $\mathcal{I}_q$. We do the study again, but keeping this finer partition in mind.

Let $p,q,p',q',\a$ and $\a'$ be as before. We divide the subinterval $\mathcal{I}_{kq}$ into $k$ ordered sets described as follows:
\begin{align}
w_j\coloneqq\{I_{j+tk}^{kq}:t=0,\ldots, q-1\}
\end{align}
So each $w_j$ is the orbit of $[j/(kq),(j+1)/(kq))$ under $R_\a$. It can be viewed as a word of length $q$ in the alphabet $\mathcal{I}_{kq}$.

Let $J$ be the subinterval as before (see section \ref{section transects 1}). We track the $R_{\a'}$ iterates of $J$ through the $w_j$ s as follows:
\begin{enumerate}

\item[0.] \emph{$0$-th interval} $n\in[0, klq)$: Note that any such $n$ can be written as $n=mlq+sq+t$ for some appropriately chosen $0\leq m<k,0\leq s<l,0\leq t<q$. So $(R_{\a'})^n(J)$ is a subinterval of the $t$-th element of $w_m$. So 
\begin{itemize}
\item $t\in[0,lq)$: The $\mathcal{I}_{kq}$-name agrees with the $w_0$ name repeated $l$ times
\item $t\in[lq,2lq)$: The interval crosses the boundary for the $\mathcal{I}_{kq}$ partition and the name changes to $w_1$ repeated $l$-times.
\item[.] $\ldots\ldots$
\item[]  $k-3$ more times to a total of $k-1$ times.
\item[.] $\ldots\ldots$
\item $t\in [(k-1)lq,klq)$: The interval crosses the boundary for the $\mathcal{I}_{kq}$ partition and the name changes to $w_{k-1}$ repeated $l$-times. $J_{i_{klq-1}}$ is geometrically the last subinterval of $I_1$. 
\end{itemize}
Hence the first $klq$ letters of the $\mathcal{I}_{kq}$-name of any point in $J$ is 
\begin{align}
w_0^lw_1^lw_2^l\ldots w_{k-1}^l
\end{align}  

\item[1.] \emph{$1$-st interval} $n\in[klq,2klq)$: 
\begin{itemize}
\item $t\in [klq,(k+1)lq)$: 
\begin{itemize}
\item $t\in [klq, klq+q-j_1)$: $(R^{\a'})^{klq}(J)=J_{i_{klq}}$ is geometrically the first subinterval of $I_2^q$. we use another $q-j_1$ applications of $R^{\a'}$ to bring $J$ inside $I_0^q$.  
\item $t\in [klq+q-j_1, (k+1)lq-j_1)$: $J_{i_{klq+q-j_1}}$ is a subinterval of the geometrically first subinterval of $\mathcal{I}_{kq}$. In fact it is in the first subinterval of $w_1$. We apply  $R^{\a'}$ $(l-1)q$ times to carry it through $l-1$ copies of $w_1$ and it arrives at $I_0^q$ 
\item $t\in [(k+1)lq-j_1, (k+1)lq)$: We apply  $R^{\a'}$ again $j_1$ times to bring it back to $I_1^q$ inside $w_2$.
\end{itemize}
Resulting name is $b_0^{q-j_1}w_0^{l-1}e_0^{j_1}$. Here $b_j^{q-j_1}$ and $e_j^{j_1}$ are the last $q-j_1$ and the first $j_1$ elements of $w_j$.

\item $t\in [(k+1)lq,(k+2)lq)$: With an identical argument we get the name $b^{q-j_1}_1w_{1}^{l-1}e^{j_1}_1$.

\item[.] $\ldots\ldots$
\item[] $k-3$ more times to a total of $k-1$ times.
\item[.] $\ldots\ldots$
\item $t\in [(k+k-1)lq,2klq)$: With an identical argument we get the name $b^{q-j_1}_{k-1}w_{k-1}^{l-1}e^{j_1}_{k-1}$. $J_{i_{2klq-1}}$ is the geometrically last subinterval of $I_1$.
\end{itemize}
So the $klq$ to $2klq-1$-th letters of the $\mathcal{I}_{kq}$ name of any point in $J$ is given by 
\begin{align}
b_0^{q-j_1}w_0^{(l-1)}e_0^{j_1}b_1^{q-j_1}w_1^{(l-1)}e_1^{j_1}\ldots b_{k-1}^{q-j_1}w_{k-1}^{(l-1)}e_{k-1}^{j_1}
\end{align} 
 
\item[2.] $\ldots$
\item[3.] $\ldots$
\item[] $\ldots$
\item[] $\ldots$

\item[m.] \emph{$m$-th interval} $n\in[mklq,(m+1)klq)$: The $mklq$ to $(m+1)klq-1$-th letters of the $\mathcal{I}_{kq}$ name of any point in $J$ is given by 
\begin{align}
b_0^{q-j_m}w_0^{(l-1)}e_0^{j_m}b_1^{q-j_m}w_1^{(l-1)}e_1^{j_m}\ldots b_{k-1}^{q-j_m}w_{k-1}^{(l-1)}e_{k-1}^{j_m}
\end{align} 

\item[m+1.] $\ldots$
\item[] $\ldots$
\item[] $\ldots$

\item[q-1.] (q-1)-th interval $t\in [(q-1)klq,klq^2)$: An identical argument yields the name
\begin{align}
b^{q-j_{q-1}}w_{0}^{l-1}e^{j_{q-1}} b^{q-j_{q-1}}w_{1}^{l-1}e^{j_{q-1}} \ldots b^{q-j_{q-1}}w_{k-1}^{l-1}e^{j_{q-1}}
\end{align}
\end{enumerate}
So in conclusion any point in $J$ has a $\mathcal{I}_{kq}$-name as follows:
\begin{align}
w=\prod_{i=0}^{q-1}\prod_{j=0}^{k-1}(b_j^{q-j_i}w_j^{l-1}e_j^{j_i})
\end{align}
So our periodic process is isomorphic to the symbolic system defined by the above circular operator.

\subsection{Back to the regular sequence of periodic processes}

Note that the partition $\xi_n$ divides $\T^2$ into $s_n$ identical towers. On each tower , the action is identical and hence when restricted to a single tower of $\mathcal{I}_{k_nq_n}\otimes\mathcal{I}_{s_{n+1}}$ we get the same analysis as before. So we can also copy the previous labeling to a labeling here. 

\subsection{Symbolic representation of AbC systems}

First we describe the general idea behind the representation of the AbC method as a symbolic system. We recall some relevant portions from the abstract untwisted AbC method described in section \ref{section abstract AbC}.  The limit transformation $T$ is obtained as the weak limit of transformations $T_n\coloneqq H_n\circ R^{\a_n}\circ H_n^{-1}$.  At the $n$ th stage $T_n$ permutes the partition $\zeta_n=H_n(\xi_n)$ \footnote{ So $\zeta_n$ is just $\xi_n$ ordered in a different way.}.With the above in mind, we recall that $(\tau_n,\zeta_n)$ is a periodic process where $\tau_n$ is the permutation induced by $T_n$ on $\zeta_n$.  

If $\mathcal{Q}$ is a partition refined by the levels of the towers of a periodic process $\tau$, then the $\mathcal{Q}$ names of any pointwise realization of $\tau$ are constant on the levels of the tower. Hence this is equivalent to naming the levels in the action of $\tau$ on various towers. We call the resulting collection of names the $(\tau,\mathcal{Q})$ names.

Let $\mathcal{Q}^*$ be an arbitrary partition of $\T^2$ refined by $\zeta_n$. We would like to compare the $\mathcal{Q}^*$ names of points under $\tau_n$ and $\tau_{n+1}$. We introduce the partition $(H_n)^{-1}(\mathcal{Q}^*)$. So the problem of finding the $(\tau_{n+1},\mathcal{Q}^*)$ name is equivalent to finding the $(h_{n+1}\circ R^{\a_n}\circ h_{n+1}^{-1}, \mathcal{P})$ names of towers whose levels consists of the partition $(H_n)^{-1}(\zeta_{n+1})=\xi_{n+1}$.

\subsubsection*{Tracking movement of individual rectangles}

For notational simplicity we put $k_n=k, q_n=q, I_i=I_i^{q_n}$ and $J_i=I_i^{q_{n+1}}$. Fix $R\in \xi_{n+1}$. 

Assume that $R_{i,j}^{n+1}=h_{n+1}^{-1}(R)\subset J_i$ for some $i$ and $I_i$ is not the last subinterval of an interval in the partition $\mathcal{I}_{kq}$. In this case both $R^{\a_n}(R_{i,j}^{n+1})$ and $R^{\a_{n+1}}(R_{i,j}^{n+1})$ belong to the same element of $\mathcal{I}_{kq}\times\T^1$.

Since $R^{\a_n}$ commutes with $h_{n+1}$ and $h_{n+1}$ permutes the atoms of $\mathcal{I}_{k_nq_n}\otimes\mathcal{I}_{s_{n+1}}$, we have $h_{n+1}\circ R^{\a_{n+1}}\circ h_{n+1}^{-1}(R)= h_{n+1}\circ R^{\a_{n+1}}(R_{i,j}^{n+1})$ and $h_{n+1}\circ R^{\a_{n}}\circ h_{n+1}^{-1}(R)= R^{\a_{n}}(R)$ belong to the same atom of $\mathcal{I}_{k_nq_n}\otimes\mathcal{I}_{s_{n+1}}$. So they share the same $\mathcal{P}$-name.

On the other hand if $J_i$ is the last subinterval of an interval in the partition $\mathcal{I}_{kq}$, then $R^{\a_{n+1}}$ sends $R^{n+1}_{i,j}$ to the geometrically first subrectangle of a new element $R'$ of $\mathcal{I}_{kq}\otimes\mathcal{I}_{s_{n+1}}$. Thus $h_{n+1}\circ R^{\a_{n+1}}\circ h_{n+1}^{-1}(R)$ is a subrectangle of $h_{n+1}(R')$.

\subsubsection*{Tracking movement of levels through a tower}

We note that the base of the towers of $(\tau_{n+1},\zeta_{n+1})$ are the rectangles $\{H_{n+1}(R^{n+1}_{0,j})\}_{j=0}^s$ while those for the towers of $(h_{n+1}\circ R^{\a_{n+1}}\circ h_{n+1}^{-1},\mathcal{P})$ are $\{h_{n+1}(R^{n+1}_{0,j})\}_{j=0}^s$.

So with $F_0\coloneqq h_{n+1}(R^{n+1}_{0,j})$ as the base, we define $F_t\coloneqq (h_{n+1}\circ R^{\a_{n+1}}\circ h_{n+1}^{-1})^t(h_{n+1}(R^{n+1}_{0,j}))= (h_{n+1}\circ (R^{\a_{n+1}})^t)(R^{n+1}_{0,j}))= h_{n+1}(R^{n+1}_{i_t,j})$ where $i_t$ is the dynamical ordering of the $t$-th interval under iterations by $R^{\a_{n+1}}$. 

We recall the labeling we used in section \ref{section transects 2} using the $w,b$ and $e$ s. So if $t$ is such that $J_{i_t}$ is labeled with a part of $w$, the two transformations $h_{n+1}\circ  R^{\a_{n+1}}\circ h_{n+1}$ and $R_{\a_n}$ move $F_t$ to a subrectangle of the same element of $\mathcal{I}_{kq}\otimes\mathcal{I}_{s_{n+1}}$ and hence the same element of $\mathcal{P}$.

For $j < k, t<q$ and $s<s_{n+1}$, define:
\begin{align}
R_{j,t,s}\coloneqq \big[\frac{j+tk}{kq},\frac{j+tk+1}{kq}\big)\times \big[\frac{s}{s_{n+1}},\frac{s+1}{s_{n+1}}\big)
\end{align}  
And $u_{j,s}$ be the sequence of $\mathcal{P}$-names for 
\begin{align}
\{h_{n+1}(R_{j,t,s})\}_{t=0}^{q-1}
\end{align}

The word $u_{j,s}$ is the sequence of $\mathcal{P}$-names for the levels of the tower $\{(R^{\a_n})^t\circ h_{n+1}(R_{i,s}^{n+1})\}_{t=0}^{q}$, fr any $J_i\subset[j/(kq),(j+1)/(kq))$.

Using transects, we now describe the $\mathcal{P}$-name of the orbit of $F_0$ under $h_{n+1}\circ R^{\a_{n+1}}\circ h_{n+1}^{-1}$. We use $t$ to denote the iterations of $h_{n+1}\circ R^{\a_{n+1}}\circ h_{n+1}^{-1}$ and hence it also denotes the level of the tower.
\begin{enumerate}
\item[0.] 0-th interval $t\in [0,klq)$: 
\begin{itemize}
\item $t\in[0,lq)$: The $(h_{n+1}\circ R^{\a_{n+1}}\circ h_{n+1}^{-1},\mathcal{P})$ name agrees with the $R^{\a_n}$-name $u_{0,s}$ repeated $l$-times.
\item $t\in[lq,2lq)$: The interval crosses the boundary for the $\mathcal{I}_{kq}$ partition and the name changes to $u_{1,s}$ repeated $l$-times.
\item[.] $\ldots\ldots$
\item[]  $k-3$ more times to a total of $k$ times.
\item[.] $\ldots\ldots$
\item $t\in [(k-1)lq,klq)$: The interval crosses the boundary for the $\mathcal{I}_{kq}$ partition and the name changes to $u_{k-1,s}$ repeated $l$-times. $J_{i_{klq-1}}$ is geometrically the last subinterval of $I_1$. 
\end{itemize}

\item[1.] 1-st interval $t\in[klq, 2klq)$:
\begin{itemize}
\item $t\in [klq,(k+1)lq)$: 
\begin{itemize}
\item $t\in [klq, klq+q-j_1)$: $J_{i_{klq}}$ is geometrically the first subinterval of $I_2$. This portion of the  transect is labeled with $q-j_1$ copies of $b$. 
\item $t\in [klq+q-j_1, (k+1)lq-j_1)$: $J_{i_{klq+q-j_1}}$ is a subinterval of the geometrically first subinterval of $\mathcal{I}_{kq}$. So $F_t$ is the first letter of $u_{0,s}$. For the next $q(l-1)$ iterations the  $h_{n+1}\circ R^{\a_{n+1}}\circ h_{n+1}^{-1}$ names are the same as the $R^{\a_n}$ names and obtain the name $u_{0,s}$. 
\item $t\in [(k+1)lq-j_1, (k+1)lq)$: This segment of the transects is labeled by $j_1$ copies of $e$. 
\end{itemize}
Resulting name is $b^{q-j_1}u_{0,s}^{l-1}e^{j_1}$.

\item $t\in [(k+1)lq,(k+2)lq)$: With an identical argument we get the name $b^{q-j_1}u_{1,s}^{l-1}e^{j_1}$.

\item[.] $\ldots\ldots$
\item[] $k-3$ more times to a total of $k$ times.
\item[.] $\ldots\ldots$
\item $t\in [(k+k-1)lq,2klq)$: With an identical argument we get the name $b^{q-j_1}u_{k-1,s}^{l-1}e^{j_1}$. $J_{i_{2klq-1}}$ is the geometrically last subinterval of $I_1$.
\end{itemize}
So the name of the 1-st interval is:
\begin{align}
b^{q-j_1}u_{0,s}^{l-1}e^{j_1} b^{q-j_1}u_{1,s}^{l-1}e^{j_1} \ldots b^{q-j_1}u_{k-1,s}^{l-1}e^{j_1}
\end{align}

\item[2.] $\ldots$
\item[3.] $\ldots$
\item[] $\ldots$
\item[] $\ldots$

\item[m.] m-th interval $t\in [mklq,(m+1)klq)$: An identical argument yields the name
\begin{align}
b^{q-j_m}u_{0,s}^{l-1}e^{j_m} b^{q-j_m}u_{1,s}^{l-1}e^{j_m} \ldots b^{q-j_m}u_{k-1,s}^{l-1}e^{j_m}
\end{align}

\item[m+1.] $\ldots$
\item[] $\ldots$
\item[] $\ldots$

\item[q-1.] (q-1)-th interval $t\in [(q-1)klq,klq^2)$: An identical argument yields the name
\begin{align}
b^{q-j_{q-1}}u_{0,s}^{l-1}e^{j_{q-1}} b^{q-j_{q-1}}u_{1,s}^{l-1}e^{j_{q-1}} \ldots b^{q-j_{q-1}}u_{k-1,s}^{l-1}e^{j_{q-1}}
\end{align}
\end{enumerate}

\begin{theorem}\label{theorem 46}
Let $F_0\coloneqq h_{n+1}(R_{0,s})$ be the base of a tower $\mathcal{T}$ for $h_{n+1}\circ R^{\a_{n+1}}\circ h_{n+1}^{-1}$. Then the $\mathcal{P}$-names of $\mathcal{T}$ agree with
\begin{align}
u\coloneqq \prod_{i=0}^{q-1}\prod_{j=0}^{k-1}(b^{q-j_i}u_{j,s}^{l-1}e^{j_i})
\end{align}
on the interior of $u$. 
\end{theorem}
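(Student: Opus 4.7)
The plan is to carry out the transect analysis of section \ref{section transects 2} in the conjugated setting, using the commutation of $h_{n+1}$ with $R^{\alpha_n}$ to translate between the iteration of $h_{n+1}\circ R^{\alpha_{n+1}}\circ h_{n+1}^{-1}$ and the iteration of $R^{\alpha_{n+1}}$ on the preimage tower. Concretely, I would first observe that since $h_{n+1}$ permutes the atoms of $\mathcal{I}_{k_nq_n}\otimes\mathcal{I}_{s_{n+1}}$ and commutes with $R^{\alpha_n}$, the orbit $F_t := (h_{n+1}\circ R^{\alpha_{n+1}}\circ h_{n+1}^{-1})^t(F_0)$ equals $h_{n+1}\big((R^{\alpha_{n+1}})^t(R_{0,s})\big)$. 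So understanding the $\mathcal{P}$-name of $F_t$ reduces to understanding which atom of $\mathcal{I}_{k_nq_n}\otimes\mathcal{I}_{s_{n+1}}$ contains $(R^{\alpha_{n+1}})^t(R_{0,s})$, and then reading off the image under $h_{n+1}$.

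Next I would index $t\in[0,q_{n+1})$ as $t=mklq+r$ with $0\le m<q$ and $0\le r<klq$, exactly as in the enumeration at the end of section \ref{section transects 2}. The $m$-th ``interval'' corresponds to one trip of the transect across the sub-columns labeled $w_0,w_1,\ldots,w_{k-1}$; within such an interval the initial $q-j_m$ iterates land in the geometric end of the preceding $\mathcal{I}_{kq}$-column (these are the boundary letters accounting for the prefix $b^{q-j_i}$ in the formula), the middle $(l-1)q$ iterates stay inside a single $w_j$ block and hence share the same element of $\mathcal{I}_{kq}\otimes\mathcal{I}_{s_{n+1}}$ as consecutive $R^{\alpha_n}$-iterates, and the final $j_m$ iterates form the suffix $e^{j_m}$. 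Here the critical point is that the middle portion, by the commutation observation above, has its $\mathcal{P}$-name given by $u_{j,s}^{l-1}$ where $u_{j,s}$ is exactly the $\mathcal{P}$-name sequence of $\{h_{n+1}(R_{j,t,s})\}_{t=0}^{q-1}$, since for $t$ running through a single copy of $w_j$ the rectangle $(R^{\alpha_n})^t(R_{0,s})$ visits all $q$ atoms $R_{j,0,s},\ldots,R_{j,q-1,s}$ in order dictated by the dynamical ordering and $h_{n+1}$ sends each of these to the rectangle giving the $t$-th letter of $u_{j,s}$.

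Combining the $k$ sub-columns within the $m$-th interval yields the block
\begin{equation*}
b^{q-j_m}u_{0,s}^{l-1}e^{j_m}\,b^{q-j_m}u_{1,s}^{l-1}e^{j_m}\,\cdots\, b^{q-j_m}u_{k-1,s}^{l-1}e^{j_m},
\end{equation*}
and concatenating over $m=0,1,\ldots,q-1$ produces exactly the word $u$ in the theorem. The agreement holds on the interior of $u$ because the only places where the conjugated map $h_{n+1}\circ R^{\alpha_{n+1}}\circ h_{n+1}^{-1}$ could differ from the name assigned by the formula are the boundary letters (the $b$'s and $e$'s) at transitions between $\mathcal{I}_{kq}$-columns and between $m$-intervals; these are precisely the positions recorded by the $b^{q-j_i}$ prefixes and $e^{j_i}$ suffixes, and they lie on the boundary of $u$ rather than in its interior.

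The main obstacle is bookkeeping: accurately pairing the geometric position $(R^{\alpha_{n+1}})^t(R_{0,s})\subset J_{i_t}$ with the interior/boundary regions of a $w_j$ block, and tracking how the modular identity $j_i=(p_n)^{-1}i\bmod q_n$ governs the lengths $q-j_m$ and $j_m$ of the boundary portions. Once this combinatorial translation is in place the identification of the $\mathcal{P}$-name with $u$ is automatic from the enumeration in section \ref{section transects 2}.
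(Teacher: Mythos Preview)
Your proposal is correct and follows essentially the same approach as the paper: the paper's ``proof'' of Theorem~\ref{theorem 46} is precisely the case-by-case transect enumeration in the subsection \emph{Tracking movement of levels through a tower} immediately preceding the theorem, and you reproduce that analysis---reducing $F_t$ to $h_{n+1}\big((R^{\alpha_{n+1}})^t(R_{0,s})\big)$ via commutation, indexing $t=mklq+r$, and reading off the $b^{q-j_m}u_{j,s}^{l-1}e^{j_m}$ pattern in each of the $k$ sub-blocks of the $m$-th interval. One small wording slip: within a sub-block the middle $(l-1)q$ iterates do not all lie in a \emph{single} atom of $\mathcal{I}_{kq}\otimes\mathcal{I}_{s_{n+1}}$; rather each $R^{\alpha_{n+1}}$-iterate lies in the same atom as the corresponding $R^{\alpha_n}$-iterate, which is what you use anyway.
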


\begin{corollary}
With $J_i\subseteq [j/(kq),(j+1)/kq-(1/q_{n+1}))$, $j<kq$ and $R=R^n_{i,s}$, the levels $\{(R^{\a_n})^t(R)\}_{t=0}^{q-1}$ coincide with the levels $\{(h_{n+1}\circ R^{\a_{n+1}}\circ h_{n+1}^{-1})^th_{n+1}(R)\}_{t=0}^{q-1}$ in the tower for $\tau_n$. In particular their $\mathcal{Q}^*$ names agree.
\end{corollary}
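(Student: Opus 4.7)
The plan is to exploit two simple identities and the interior condition on $J_i$. First I would decompose
\[
R^{\a_{n+1}} = R^{\a_n}\circ R^{1/q_{n+1}},
\]
which follows directly from $p_{n+1}=p_nq_nk_nl_n+1$, i.e.\ $\a_{n+1}=\a_n+1/q_{n+1}$. Combined with the commutation $h_{n+1}\circ R^{\a_n}=R^{\a_n}\circ h_{n+1}$ (valid because $h_{n+1}$ commutes with $R^{1/q_n}$ and $R^{\a_n}$ is a power of $R^{1/q_n}$), iteration gives
\[
(h_{n+1}\circ R^{\a_{n+1}}\circ h_{n+1}^{-1})^t(h_{n+1}(R))
 = R^{t\a_n}\!\Big(h_{n+1}\big(R^{t/q_{n+1}}(R)\big)\Big).
\]

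Next I would use the observation made in the paragraph just before Theorem \ref{theorem 46}: the hypothesis $J_i\subseteq [j/(kq),(j+1)/(kq)-1/q_{n+1})$ is exactly the condition ensuring that the fractional translate $R^{1/q_{n+1}}(R)$ stays in the same atom of $\xi_{k_nq_n}^{s_{n+1}}$ as $R$, and iterating this (within the $q$ contiguous levels, using the transect analysis of \S\ref{section transects 2}) that $R^{t/q_{n+1}}(R)$ remains in that same atom for the relevant range of $t$. Since $h_{n+1}$ permutes atoms of $\xi_{k_nq_n}^{s_{n+1}}$ acting as a single translation on each atom, it commutes with $R^{1/q_{n+1}}$ inside the atom containing $R$. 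Hence $h_{n+1}(R^{t/q_{n+1}}(R))=R^{t/q_{n+1}}(h_{n+1}(R))$, which plugged into the previous display yields
\[
(h_{n+1}\circ R^{\a_{n+1}}\circ h_{n+1}^{-1})^t(h_{n+1}(R)) = R^{t\a_{n+1}}(h_{n+1}(R)).
\]

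Finally I would compare this with $(R^{\a_n})^t(R)=R^{t\a_n}(R)$ at the level of $\mathcal{P}$. The two rectangles differ by (i) an application of $h_{n+1}$, which by the untwisted hypothesis preserves the vertical strips $[i/q_n,(i+1)/q_n)\times\T^1$ and hence maps each atom of $\mathcal{P}=H_n^{-1}(\mathcal{Q}^\ast)$ to itself (since $\mathcal{P}$ is coarser than $\xi_n$), and (ii) a fractional translation by $t/q_{n+1}\leq (q-1)/q_{n+1}$, which is much smaller than the width of a $\mathcal{P}$-atom and so cannot cross a $\mathcal{P}$-boundary given the interior condition on $J_i$. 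Both rectangles therefore lie in the same atom of $\mathcal{P}$, and pushing forward by $H_n$ we conclude that the $\mathcal{Q}^\ast$-names of the two towers agree on the range $t=0,\ldots,q-1$.

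The main obstacle is the bookkeeping: one must juggle the four partitions $\xi_n$, $\xi_{n+1}$, $\xi_{k_nq_n}^{s_{n+1}}$, and $\mathcal{P}$, and carefully check that the cumulative fractional shift $q/q_{n+1}=1/(k_nl_nq_n)$ stays inside a single atom of $\mathcal{P}$. This is where the hypothesis restricting $J_i$ to the interior of its $\mathcal{I}_{k_nq_n}$-atom is indispensable, and it is precisely what prevents the $b/e$ boundary letters from appearing on this portion of the tower.
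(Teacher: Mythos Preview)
Your decomposition $R^{\a_{n+1}}=R^{\a_n}\circ R^{1/q_{n+1}}$ and the commutation with $h_{n+1}$ are correct, and the displayed identity for the $t$-th iterate is exactly the mechanism behind the paper's ``Tracking movement of individual rectangles'' paragraph. The gap is your claim (i). The untwisted hypothesis only says $h_{n+1}$ leaves each vertical strip $[i/q_n,(i+1)/q_n)\times\T^1$ invariant; it does \emph{not} say $h_{n+1}$ fixes the individual $\xi_n$-atoms $R^n_{i,j}$ within that strip. Indeed the whole content of $h_{n+1}$ (see Requirement~2 in \S\ref{section requirements}) is that it moves $\xi_{k_nq_n}^{s_{n+1}}$-atoms between different $\xi_n$-rows inside a fixed column. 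Since $\mathcal{P}=H_n^{-1}(\mathcal{Q}^*)$ can be as fine as $\xi_n$ itself (take $\mathcal{Q}^*=\zeta_n$), the rectangles $R$ and $h_{n+1}(R)$ will in general lie in different $\mathcal{P}$-atoms, and your bridge from $(R^{\a_n})^t(R)$ to $(R^{\a_n})^t(h_{n+1}(R))$ collapses. The paper avoids this entirely: in the tracking paragraph and in the definition of $u_{j,s}$ it always compares the two dynamics starting from the \emph{same} rectangle $h_{n+1}(R^{n+1}_{i,s})$, never from $R^{n+1}_{i,s}$ itself, so no such $h_{n+1}$-bridge is needed.

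A secondary issue: your claim that $R^{t/q_{n+1}}(R)$ stays in the same $\xi_{k_nq_n}^{s_{n+1}}$-atom for all $t\le q-1$ requires $J_i$ to avoid the last $q-1$ subintervals of its $\mathcal{I}_{kq}$-atom, whereas the stated hypothesis only excludes the last one and hence only buys a single step. Your appeal to ``iterating this \ldots\ using the transect analysis'' does not close this; the transect analysis describes the orbit structure but does not strengthen the hypothesis you start from. This is harmless for how the corollary is actually used downstream (on the interior of a $w_j^{l-1}$-block the margin is ample), but as written the iteration is unjustified.
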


\begin{corollary}
For a set $x\in X$ having measure at least $1-3/l_n$, the $(\tau_n,\mathcal{Q}^*)$ and $(\tau_n,\mathcal{Q}^*)$ names of $x$ agree on the interval $[-q,q]$.
\end{corollary}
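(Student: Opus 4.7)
The plan is to read the corollary as a direct density count on top of Theorem \ref{theorem 46}, with the understanding that the second $\tau_n$ in the statement should be $\tau_{n+1}$. First I would recall from the discussion following the definition of the operator $\mathcal{C}$ (equation \ref{formula C}) that the boundary of the word $u=\mathcal{C}(u_{0,s},\ldots,u_{k-1,s})$ has proportion $1/l_n$, and the part of $u$ within $q=q_n$ letters of the boundary has proportion at most $3/l_n$.

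Next I would set up the transect picture. Fix a point $x\in \T^2$ and let $\mathcal{T}$ be the tower of $(\tau_{n+1},\zeta_{n+1})$ containing $x$; say $x$ sits at level $t$ of $\mathcal{T}$. By Theorem \ref{theorem 46}, the $(\tau_{n+1},\mathcal{Q}^*)$-name of $x$ is the subword of $u$ read at positions $t+i$ as $i$ ranges over $\Z$ (while we remain in the tower). On the other hand, the subwords $u_{j,s}$ appearing in $u$ are precisely the $(\tau_n,\mathcal{Q}^*)$-names of the $\tau_n$-subtowers based at $h_{n+1}(R^{n+1}_{j,t,s})$, by the computation preceding Theorem \ref{theorem 46}, which shows that on an interior occurrence of $u_{j,s}$ the two transformations $h_{n+1}\circ R^{\a_{n+1}}\circ h_{n+1}^{-1}$ and $R^{\a_n}$ move levels to the same atom of $\mathcal{I}_{k_nq_n}\otimes\mathcal{I}_{s_{n+1}}$, hence produce the same $\mathcal{P}$-letter and the same $\mathcal{Q}^*$-letter.

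Consequently, if the whole window $[t-q,t+q]$ of positions in $u$ avoids every $b$ and every $e$, then for each $i\in[-q,q]$ both $\tau_n^i(x)$ and $\tau_{n+1}^i(x)$ lie in the same level of the same $\tau_n$-subtower and so have the same $\mathcal{Q}^*$-name. Letting $G\subset \T^2$ be the set of such $x$, the measure of its complement is at most the measure-density of positions of $u$ lying within $q$ letters of a boundary symbol, which I recalled is bounded by $3/l_n$. Since the uniform distribution of $\mu$ on the levels of $\mathcal{T}$ matches the counting density along $u$, we get $\mu(G)\geq 1-3/l_n$, which is the claim.

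The main obstacle, which is minor, is bookkeeping the boundary effects at the extreme top and bottom of a $\tau_{n+1}$-tower (where the window $[-q,q]$ might fall off $\mathcal{T}$). This is handled by noting that those top/bottom levels are themselves part of the error set of the $\e_n$-approximation between $(\tau_n,\zeta_n)$ and $(\tau_{n+1},\zeta_{n+1})$, whose measure is already absorbed in the $3/l_n$ bound since the first $q$ and last $q$ levels of each $\tau_{n+1}$-tower correspond precisely to the outermost boundary letters of $u$.
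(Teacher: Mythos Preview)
Your proposal is correct and follows exactly the approach the paper intends: the paper states this corollary without proof, relying on Theorem \ref{theorem 46} together with the density observation recorded in the bullet after equation \eqref{formula C} that the proportion of the word within $q$ letters of the boundary is $3/l$. Your identification of the typo ($\tau_{n+1}$ for the second $\tau_n$) and your handling of the top/bottom levels of the $\tau_{n+1}$-tower are both on point.
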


\begin{corollary}\label{corollary 49}
Suppose that $x\in \Gamma_n$ and $x$ is on level $t_n$ of a $\tau_n$-tower and the level $t_{n+1}$ of a $\tau_{n+1}$-tower. Let $w_n$ be the $\mathcal{Q}^*$-name of $x$ with respect to $\tau_n$ and $w_{n+1}$ be the $\mathcal{Q}^*$-name of $x$ with respect to $\tau_{n+1}$. Then $w_{n+1}|_{[t_{n+1}-t_n,t_{n+1}+q_n-t_n)}=w_n$.
\end{corollary}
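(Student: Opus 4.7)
The plan is to deduce the corollary as a direct consequence of Theorem \ref{theorem 46}, which gives the explicit form of the $\tau_{n+1}$-tower name as $u=\prod_{i=0}^{q_n-1}\prod_{j=0}^{k_n-1}(b^{q_n-j_i}u_{j,s}^{l_n-1}e^{j_i})$, together with a careful accounting of where $x$ sits inside this structure. The set $\Gamma_n$ is the ``good set'' of points whose $\tau_n$-orbit lies entirely in the interior of $u$ (i.e., inside a single occurrence of a block $u_{j,s}$ appearing in the product), and this is exactly the hypothesis needed to read off $w_n$ as a contiguous subword of $w_{n+1}$.

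First I would fix notation. Since $x$ is on level $t_{n+1}$ of a $\tau_{n+1}$-tower, the word $w_{n+1}$ (viewed as the $\mathcal{Q}^*$-labeling of the tower from its base upward) satisfies $w_{n+1}(i) = \mathcal{Q}^*\text{-label of }\tau_{n+1}^{i-t_{n+1}}(x)$ for $i\in[0,q_{n+1})$. Similarly, $w_n$ is the $\mathcal{Q}^*$-labeling of the $\tau_n$-tower through $x$ from its base, so $w_n(i) = \mathcal{Q}^*\text{-label of }\tau_n^{i-t_n}(x)$ for $i\in[0,q_n)$. In particular, the base of the $\tau_n$-tower through $x$ is $\tau_{n+1}^{-t_n}(x)$ (up to the error set, which is avoided by $\Gamma_n$), which corresponds to position $t_{n+1}-t_n$ in $w_{n+1}$.

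Next I would invoke Theorem \ref{theorem 46} to write $w_{n+1}=u=\prod_{i=0}^{q_n-1}\prod_{j=0}^{k_n-1}(b^{q_n-j_i}u_{j,s}^{l_n-1}e^{j_i})$, where each $u_{j,s}$ is precisely a $\tau_n$-tower name. By the definition of $\Gamma_n$, the entire $\tau_n$-orbit of $x$ through its tower lies in the interior of $u$, and moreover the sequence $\tau_{n+1}^k(x)$ for $k=-t_n,\ldots,q_n-t_n-1$ coincides with $\tau_n^k(x)$ in $\mathcal{Q}^*$-label, because on such interior blocks the iteration by $R^{\a_{n+1}}$ tracks the iteration by $R^{\a_n}$ (this is exactly the content of the transect analysis of sections \ref{section transects 1}, \ref{section transects 2} and the ``interior'' clause of Theorem \ref{theorem 46}). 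Hence the block of $w_{n+1}$ of length $q_n$ beginning at position $t_{n+1}-t_n$ is exactly one of the $u_{j,s}$ factors, and it records the same $\mathcal{Q}^*$-labels as $w_n$.

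Finally I would conclude $w_{n+1}|_{[t_{n+1}-t_n,\, t_{n+1}-t_n+q_n)} = w_n$ by matching positions. The main (very modest) obstacle here is bookkeeping: one must verify that the position $t_{n+1}-t_n$ in $w_{n+1}$ really does land at the start of an interior $u_{j,s}$ factor. This is forced by the $\Gamma_n$ hypothesis (the entire $\tau_n$-orbit avoids the $b$/$e$ boundary regions and avoids the boundary occurrences of $u_{j,s}^{l_n-1}$), together with the fact that the $\tau_n$-tower through $x$ has the $\tau_n$-base aligned with one of the block starts in $u$. No further calculation is required beyond reading off indices.
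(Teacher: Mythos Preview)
Your proposal is correct and follows essentially the same route as the paper, which states this as an immediate corollary of Theorem \ref{theorem 46} and the preceding corollary without writing out a separate proof. Your bookkeeping of the indices $t_n$, $t_{n+1}$ and the identification of the $q_n$-block at position $t_{n+1}-t_n$ with an interior occurrence of some $u_{j,s}$ is exactly the intended argument; the only superfluous remark is the claim that $\Gamma_n$ forces avoidance of the boundary \emph{occurrences} of $u_{j,s}^{l_n-1}$, which is not needed (membership in any single $u_{j,s}$ copy suffices) and is not quite what $\Gamma_n$ asserts.
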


\subsubsection*{The symbolic representation}

We define the following two sets:
\begin{align}
B\coloneqq \{x\in\T^2: \text{ for some } m\leq n,\; x\in\Gamma_n\text{ and } H_m^{-1}(x)\in B_m\}\\
E\coloneqq \{x\in\T^2: \text{ for some } m\leq n,\; x\in\Gamma_n\text{ and } H_m^{-1}(x)\in E_m\}
\end{align}
We define the partition
\begin{align}
\mathcal{Q}\coloneqq \{A_i:i<s_0\}\cup\{B,E\}
\end{align}
where $\{A_i\}_{i=0}^{s_0}$ is the partition $\zeta_0|_{\T^2\setminus B\cup E}$. Explicitly $A_i\coloneqq H_0([0,1)\times[s/s_0,(s+1)/s_0)\setminus B\cup E)$.

We will construct $(T,\mathcal{Q})$-names for each $x\in\T^2$ using the alphabet $\Sigma\cup\{b,e\}$ where $\Sigma\coloneqq\{a_i\}_{i=0}^{s_0-1}$. So the the name of point $x\in\T^2$ will be an $f\in (\Sigma\cup\{b,e\})^\Z$ with $f(n)=a_i\iff T^n(x)\in A_i$, $f(n)=b\iff T^n(x)\in B$ and $f(n)=e\iff T^n(x)\in E$.

We use induction for a complete description of the $(T,\mathcal{Q})$-names of points in $\cup\Gamma_n$. Let $F_0\coloneqq H_{n+1}(R_{0,s^*}^{n+1})$ for some $s^*<q_{n+1}$ be the base of a tower $\mathcal{T}$ of $\tau_{n+1}$. 

Inductively we assume that the $(\tau_n,\mathcal{Q})$-names of the towers with bases $\{H_n(R_{0,s}^n)\}_{s=0}^{s_n}$ are $u_0,\ldots, u_{s_n-1}$.

At the $n+1$-th stage of the induction we start by defining words $w_0,\ldots w_{k_n-1}$ by setting
\begin{align}
w_j=u_s\iff h_{n+1}\Big(\big[\frac{j}{k_nq_n},\frac{j+1}{k_nq_n}\big)\times \big[\frac{s^*}{s_{n+1}},\frac{s^*+1}{s_{n+1}}\big)\Big)\subseteq R_{0,s}^n
\end{align}
We say that  $(w_0,\ldots,w_{k_n-1})$ is the sequence of $n$-words associated with $\mathcal{T}$.

\begin{definition}
We define a circular system by inductively specifying the sequence $\{\mathcal{W}_n\}_{n\in\N}$. We put $\mathcal{W}_0\coloneqq\{a_i\}_{i=0}^{s_0}$. After having defined $\mathcal{W}_n$ we define
\begin{align}
\mathcal{W}_{n+1}\coloneqq\{\mathcal{C}_{n+1}(w_0,\ldots,w_{k_n-1}):(w_0,\ldots,w_{k_n-1})\text{ is associated with a tower }\mathcal{T}\text{ in }\tau_{n+1}\}
\end{align}
We say that $\{\mathcal{W}_n\}_{n\in\N}$ is the construction sequence associated with the AbC construction.
\end{definition}

\begin{theorem}
Suppose $T$ is an AbC transformation satisfying Requirements 1, 2, 3 and the $l_n$ parameters are assumed to grow fast enough. 
Let $\{\mathcal{W}_n\}_{n\in\N}$ be the construction sequence associated with $T$ and let $\mathbb{K}$ be its circular system.
 
The almost all $x\in\T^2$ have $\mathcal{Q}$-names in $\mathbb{K}$. In particular there is a measure $\nu$ on $\mathbb{K}$ that makes $(\mathbb{K},\mathcal{B},\nu,\text{sh})$ isomorphic to the factor of $\T^2$ generated by $\mathcal{Q}$.
\end{theorem}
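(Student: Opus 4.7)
The plan is to exhibit a measurable factor map $\Phi:\T^2\to(\Sigma\cup\{b,e\})^\Z$ defined by $\Phi(x)(n)=$ the label of the atom of $\mathcal{Q}$ containing $T^n(x)$, set $\nu\coloneqq\Phi_*\mu$, and verify that $\Phi(x)\in\mathbb{K}$ for $\mu$-almost every $x$. Shift-invariance of $\nu$ is immediate from $T$-invariance of $\mu$ and the intertwining $\Phi\circ T=\mathrm{sh}\circ\Phi$, and $\Phi$ is measurable because $\mathcal{Q}$ is countable. Since the factor of $(\T^2,\mu,T)$ generated by $\mathcal{Q}$ is by definition isomorphic to $(\overline{\mathrm{supp}(\nu)},\nu,\mathrm{sh})$, the only real content is the almost-sure inclusion of names into $\mathbb{K}$.

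To prove that inclusion, I would first extract a full-measure set $\Gamma^*$ on which, for every sufficiently large $n$, (a) the first $q_n$ iterates of $T$ and of $\tau_n$ agree at $x$, and (b) $x$ lies on some level $t_n$ of a $\tau_n$-tower with both $t_n\to\infty$ and $q_n-t_n\to\infty$. Two applications of Borel-Cantelli do the job: one to the error sets $D_n$ of the $\e_n$-approximation of $(\tau_n,\zeta_n)$ by $(\tau_{n+1},\zeta_{n+1})$ (using $\sum_n\e_n<\infty$), and a second, using the fast-enough growth of $l_n$ (see \ref{fast enough}) together with the proximity estimate \ref{remaark 39}, to the sets $\{x:T^i x\neq \tau_n^i x\text{ for some }|i|\leq q_n\}$, whose measures can be forced to be summable by enlarging the $l_n^*$ if necessary. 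Requirement 1 ($s_n\to\infty$) together with $q_n=k_nl_nq_{n-1}^2$ ensures the level condition (b) after discarding a null set.

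For $x\in\Gamma^*$ and each large $n$, Theorem \ref{theorem 46} identifies the $(\tau_n,\mathcal{Q})$-name of $x$, read over the length-$q_n$ window $[-t_n,q_n-t_n)$, as a word $w_n\in\mathcal{W}_n$, and Corollary \ref{corollary 49} gives the nesting $w_{n+1}|_{[t_{n+1}-t_n,\,t_{n+1}-t_n+q_n)}=w_n$. Combined with (a), the $(T,\mathcal{Q})$-name $\Phi(x)$ agrees with $w_n$ on that centred window, and since $t_n,q_n-t_n\to\infty$ the windows exhaust $\Z$. Hence every finite contiguous subword of $\Phi(x)$ occurs as a subword of some $w\in\mathcal{W}_n$, which is exactly the defining property of $\mathbb{K}$. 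The factor identification then follows from the general construction-sequence description of the symbolic representation recorded in Lemma \ref{lemma 11}.

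The main obstacle is step (a) of the second paragraph: producing the full-measure set on which the $T$- and $\tau_n$-orbits coincide over a window of length $\asymp q_n$. This is where the fast-growth hypothesis on $l_n$ does the real work, since $q_n$ is itself quadratic in $q_{n-1}$ and we need the error-set measures to remain summable at a scale that grows with the tower heights; the remaining verifications are essentially formal consequences of the circular operator construction underlying Theorem \ref{theorem 46} and the uniqueness of the non-atomic shift-invariant measure on $\mathbb{K}$.
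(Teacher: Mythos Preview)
Your overall architecture matches the paper's: define the factor map $\Phi$, show $\Phi(x)\in\mathbb{K}$ a.e.\ by checking every finite subword of the $(T,\mathcal{Q})$-name lies in some $\mathcal{W}_n$, and invoke Theorem~\ref{theorem 46}, Corollary~\ref{corollary 49}, and Lemma~\ref{lemma 11}. That skeleton is correct.

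The problem is your step (a). First, the references you cite to justify it are misplaced: estimate~\ref{remaark 39} and the fast-growth condition~\ref{fast enough} concern the real-analytic realization $T^{(\mathfrak{a})}$ and the metric $d_\rho$, whereas the theorem here is about the \emph{abstract} AbC transformation $T$; analytic proximity in $d_\rho$ has no bearing on whether $T^i x$ and a pointwise realization of $\tau_n$ land in the same $\mathcal{Q}$-atom. Second, ``$\tau_n^i x$'' is not literally defined, since $\tau_n$ is a permutation of partition atoms; if you mean $T_n^i x$, then $T^i x\neq T_n^i x$ almost everywhere, so the sets you feed to Borel--Cantelli have full measure as written. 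Third, and most importantly, (a) is unnecessary.

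The paper avoids this detour entirely. Once $x\in\Gamma_n$ and $x$ is not in the first or last $M$ levels of its $\tau_n$-tower, Corollary~\ref{corollary 49} gives that the $\tau_m$-names of $x$ on the window $[-t_n,q_n-t_n)$ are \emph{constant} (equal to $w_n$) for all $m\geq n$. Since the periodic processes $(\tau_m,\zeta_m)$ converge to $T$ and the partitions generate, this stabilized name \emph{is} the $(T,\mathcal{Q})$-name of $x$ on that window---no separate orbit-comparison is needed. So drop (a), keep your (b), and let the nesting from Corollary~\ref{corollary 49} do the work of identifying the $T$-name with the tower name directly.
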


\begin{proof}
Let $M$ be any natural number. Then for a.e. $x\in\T^2$, there exists an $N=N(x)$ such that for all $n>N$ the first or last $M$ levels of $\tau_n$ does not contain $x$.

Let $x\in\Gamma_{n_0}\subset\cup_n\Gamma_n$ be an arbitrary point. Then $x$ belongs to a tower $\mathcal{T}$ of $\tau_n$ and we may in light of the previous paragraph assume that $x$ does not belong to the first of last $M$ levels of $\mathcal{T}$. Then we can use corollary \ref{corollary 49} to conclude that the if $x$ is in the $t_n$-th level of a $\tau_n$-tower, then the $T$-name of $x$ agrees with the $\tau_n$ name of $x$ on the interval $[-t_n,q_n-t_n)$.

Thus we can apply theorem \ref{theorem 46} to conclude that with $\mathcal{P}=(H_n)^{-1}(\mathcal{Q})$, we see that a tower $\mathcal{T}$ for $\tau_n$ has the name:
\begin{align}
w = \prod_{i=0}^{q_{n-1}-1}\prod_{j=0}^{k_{n-1}-1}(b^{q_n-j_i}u_{j}^{l-1}e^{j_i})
\end{align}
where $\{w_j\}_{j=0}^{k_{n-1}-1}$ is the sequence of words associated with $\mathcal{T}$. For $x$ in the $t_n$-th level of $\mathcal{T}$, the $\mathcal{Q}$-name of $x$ on the interval $[-t_n,q_n-t_n)$ is 
\begin{align}
\prod_{i=0}^{q_{n}-1}\prod_{j=0}^{k_{n}-1}(b^{q_n-j_i}u_{j}^{l-1}e^{j_i})=\mathcal{C}_n(w_0,\ldots,w_{k_{n-1}-1})
\end{align}
Since $M<\min(t_n,q_n-t_n)$ we have $x|_{[-M,M]}$ is a subword of $\mathcal{W}_n$.

It follows that the factor of $(\T^2,\mathcal{B},\mu,T)$ corresponding to the partition $\mathcal{Q}$ is a factor of the uniform circular system defined by the constructions sequence $\{\mathcal{W}_n\}_{n\in\N}$. 

Conversely, by lemma \ref{lemma 11}, the uniform circular system $\mathbb{K}$ determined by $\{\mathcal{W}_n\}_{n\in\N}$ is characterized as the smallest shift invariant closed set intersecting every basic open set $C_0(w)$ in $(\Sigma\cup\{b,e\})^\Z$ determined by some $w\in\mathcal{W}_n$. However each $w\in\mathcal{W}_n$ is represented on $\Gamma_n\cap\mathcal{T}$ for some $\mathcal{T}$, hence each $C_0(w)$ has non empty intersection with the set of words arising from $(T,\mathcal{Q})$-names. 
\end{proof}

\begin{lemma}\label{lemma 52}
Let $a_0,b_0\in\N$. Then for a.e. $x$ and large enough $n$, $x\in\Gamma_n$, $x$ does not occur in the first $a_0$ levels or last $b_0$ levels f a tower of $\tau_n$. In particular for a.e. $x\in X$ there are $a>a_0$, $b>b_0$ such that the $\mathcal{Q}$-name of $x$ restricted to the interval $[-a,b)$ belongs to $\mathcal{W}_n$.
\end{lemma}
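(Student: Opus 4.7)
My plan is to combine a Borel--Cantelli estimate on the ``extreme levels'' of the towers $(\tau_n,\zeta_n)$ with the fact that $\mu(\Gamma_n)\nearrow 1$, and then read off the second statement from the analysis already carried out in the proof of the previous theorem.

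First I would estimate the measure of the set of points lying in the first $a_0$ or last $b_0$ levels of some $\tau_n$-tower. Since there are $s_n$ towers each of height $q_n$, and each level is an atom of $\xi_n$ of measure $1/(q_n s_n)$, the set
\begin{align*}
F_n^{a_0,b_0}\coloneqq\{x:x\text{ is in the first $a_0$ or last $b_0$ levels of some tower of }\tau_n\}
\end{align*}
has measure $\mu(F_n^{a_0,b_0})=(a_0+b_0)/q_n$. The recursion $q_{n+1}=k_nl_nq_n^2$ forces $q_n$ to grow at least doubly exponentially, so $\sum_{n}1/q_n<\infty$. By Borel--Cantelli, almost every $x$ belongs to $F_n^{a_0,b_0}$ for only finitely many $n$. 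Since by construction $\Gamma_n$ is increasing with $\mu(\cup_n\Gamma_n)=1$, almost every $x$ eventually lies in $\Gamma_n$ as well. Intersecting these two full-measure sets gives the first assertion: for a.e.\ $x$ and all sufficiently large $n$, $x\in\Gamma_n$ and $x$ avoids both the first $a_0$ and the last $b_0$ levels of its $\tau_n$-tower.

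For the second assertion, I would invoke the reasoning already used in the preceding theorem. Suppose $n$ is large enough that $x\in\Gamma_n$ sits at level $t_n$ of a $\tau_n$-tower $\mathcal{T}$ with $a_0\leq t_n$ and $q_n-t_n>b_0$. By Corollary \ref{corollary 49} (applied iteratively as $n$ grows, and in the form used in the previous theorem), the $\mathcal{Q}$-name of $x$ under $T$ on the interval $[-t_n,q_n-t_n)$ coincides with the $\tau_n$-name of $x$; and by Theorem \ref{theorem 46} together with the definition of the construction sequence, the $\tau_n$-name of the tower $\mathcal{T}$ is precisely some word in $\mathcal{W}_n$. Setting $a\coloneqq t_n>a_0$ and $b\coloneqq q_n-t_n>b_0$ yields the desired conclusion.

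The only substantive step is really the Borel--Cantelli estimate, and the main thing to check is that $q_n$ grows fast enough for $\sum 1/q_n$ to converge. This is automatic from the parameter relation $q_{n+1}=k_nl_nq_n^2\geq q_n^2$, so there is no genuine obstacle; the remainder of the argument is bookkeeping that reuses the symbolic/geometric dictionary already established.
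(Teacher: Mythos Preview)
Your proposal is correct and is precisely the intended argument. The paper does not supply an explicit proof of this lemma; it is stated without proof, and the same fact (``for a.e.\ $x$ there exists $N=N(x)$ such that for all $n>N$ the first or last $M$ levels of $\tau_n$ do not contain $x$'') is simply asserted at the start of the preceding theorem's proof. Your Borel--Cantelli computation, using that the extremal levels have total measure $(a_0+b_0)/q_n$ and that $q_{n+1}=k_nl_nq_n^2\geq q_n^2$ forces $\sum 1/q_n<\infty$, is exactly the standard justification, and your derivation of the second assertion from Corollary~\ref{corollary 49} and Theorem~\ref{theorem 46} mirrors the reasoning already in the paper.

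One cosmetic point: from ``$x$ is not in the first $a_0$ levels'' you only get $t_n\geq a_0$, not the strict inequality $a=t_n>a_0$ claimed in the ``in particular'' clause. This is harmless---just run the first part with $a_0+1$ and $b_0+1$ in place of $a_0$ and $b_0$---but you may want to say so explicitly.
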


\begin{corollary}\label{corollary 53}
For a.e. $x \in \T^2$ the $\mathcal{Q}$-name of $x$ is in $S$. In particular
if $\nu$ is the unique non-atomic shift-invariant measure on $\mathbb{K}$, then the factor of $(\T^2, \mathcal{B}, \mu, T )$ generated by $\mathcal{Q}$ is isomorphic to $(\mathbb{K}, \mathcal{B},\text{sh}, \nu)$.
\end{corollary}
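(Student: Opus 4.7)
My plan is to apply Lemma \ref{lemma 52} to produce the required sequences witnessing membership in $S$, and then use the uniqueness statement for non-atomic shift-invariant measures on a uniform circular system to identify the factor with $(\mathbb{K},\mathcal{B},\mathrm{sh},\nu)$.

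First I would observe that Lemma \ref{lemma 52} is essentially tailored for this. Given $a_0,b_0\in\N$, it provides a full measure set of $x\in\T^2$ and some index $n$ so that the $\mathcal{Q}$-name of $x$ restricted to $[-a,b)$, for suitable $a>a_0,b>b_0$, belongs to $\mathcal{W}_n$. Applying this countably many times and intersecting, I obtain a single full measure set $X_0\subset\T^2$ such that for every $x\in X_0$ and every $m\in\N$, there exist $a_m>m$ and $b_m>m$ with the $\mathcal{Q}$-name of $x$ restricted to $[-a_m,b_m)$ lying in $\mathcal{W}_m$. Setting the endpoints appropriately, this shows the $\mathcal{Q}$-name of $x$ is an element of $S$ (in the shifted form $x|_{[a_m,b_m)}\in\mathcal{W}_m$ one just re-centers the interval). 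Hence for a.e.\ $x\in\T^2$ the $\mathcal{Q}$-name lies in $S$.

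Next, from the previous theorem we already know that the $\mathcal{Q}$-name map $K:\T^2\to(\Sigma\cup\{b,e\})^\Z$ factors $(\T^2,\mathcal{B},\mu,T)$ onto its image, which is contained in the symbolic shift $\mathbb{K}$ determined by the construction sequence $\{\mathcal{W}_n\}_{n\in\N}$. The pushforward measure $\nu'\coloneqq K_*\mu$ is a shift-invariant Borel probability measure on $\mathbb{K}$. By the first paragraph, $\nu'$ concentrates on $S$, and since $\mu$ is non-atomic and $T$ is ergodic (ensured by Requirements 1--3), $\nu'$ is non-atomic as well. Because $\mathbb{K}$ is a uniform circular system, the relevant lemma on uniform circular systems (stated just after Definition \ref{definition circular systems}) guarantees that there is a unique non-atomic shift-invariant measure on $\mathbb{K}$, namely $\nu$. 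Therefore $\nu'=\nu$.

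Finally, invoking the previous theorem together with the equality $\nu'=\nu$ shows that the factor map $K$ realizes an isomorphism between the factor of $(\T^2,\mathcal{B},\mu,T)$ generated by $\mathcal{Q}$ and the symbolic system $(\mathbb{K},\mathcal{B},\mathrm{sh},\nu)$. The only delicate point is to verify that the sequences of intervals $[a_m,b_m)$ produced by Lemma \ref{lemma 52} can be chosen so that both endpoints escape to $\pm\infty$ simultaneously; this is handled by a standard diagonal extraction combined with a Borel--Cantelli argument using the summable error set measures, so I do not expect any substantive obstacle beyond that bookkeeping.
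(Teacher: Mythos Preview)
Your outline is correct and mirrors the paper's intended argument: the corollary is stated there without proof, as an immediate consequence of Lemma~\ref{lemma 52} together with the preceding theorem and the uniqueness clause for the non-atomic invariant measure on a uniform circular system.

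Two small corrections are worth recording. First, Lemma~\ref{lemma 52} gives, for almost every $x$ and all sufficiently large $n$, that the $\mathcal{Q}$-name on some interval $[-a,b)$ lies in $\mathcal{W}_n$; it does not directly place the name in $\mathcal{W}_m$ for the index $m$ you fixed as $a_0=b_0=m$. This is harmless, since every $\mathcal{W}_n$-word with $n\geq m$ parses into $\mathcal{W}_m$-subwords by the construction sequence property, so membership in $S$ follows anyway; but the sentence as written slightly overstates what the lemma yields. Second, and more substantively, the inference ``$\mu$ non-atomic and $T$ ergodic $\Rightarrow K_*\mu$ non-atomic'' is not valid in general: an ergodic transformation of a non-atomic space can certainly have a finite cyclic factor. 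What actually forces non-atomicity here is the structure of circular words --- unique readability (Lemma~\ref{lemma u readable}) prevents $S$ from containing any shift-periodic point, so every shift-invariant measure concentrated on $S$ is automatically non-atomic, and then the uniqueness statement applies.
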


\begin{lemma}\label{lemmma 54}
Suppose that for all $n$, the map sending a tower $\mathcal{T}$ for $\tau_n$ to the sequence of $\mathcal{Q}$-names associated to $\mathcal{T}$ is a one-one function. Then $\mathcal{Q}$ generates the transformation $T$.
\end{lemma}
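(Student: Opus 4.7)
The plan is to verify that the $T$-invariant $\sigma$-algebra generated by $\mathcal{Q}$ is all of $\mathcal{B}$ (modulo null sets). Recall from the proof of theorem \ref{theorem analytic AbC} that the sequence $\{\zeta_n\}_{n\in\N}$ of partitions is generating for $\mathcal{B}$; hence it suffices to show, for every $n$, that each atom of $\zeta_n$ lies in the $\sigma$-algebra $\bigvee_{i\in\Z} T^{-i}(\mathcal{Q})$.

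The mechanism is as follows. By corollary \ref{corollary 53}, the $\mathcal{Q}$-name $K(x) \in \mathbb{K}$ lies in $S$ for almost every $x$, and by lemma \ref{lemma 52}, for such $x$ one can find arbitrarily large $a, b > 0$ with $K(x)|_{[-a,b)} \in \mathcal{W}_n$. The strong unique readability of the circular construction sequence (compare lemma \ref{lemma u readable}) guarantees this parsing is unique, so there are measurable functions $a_n, w_n$ of $K(x)$ where $w_n(x) \in \mathcal{W}_n$ is the unique $\mathcal{W}_n$-word containing coordinate $0$ and $a_n(x)$ is the position of coordinate $0$ within that word. By theorem \ref{theorem 46}, $w_n(x)$ is precisely the $\mathcal{Q}$-name associated to the unique $\tau_n$-tower $\mathcal{T}_n(x)$ containing $x$, and $a_n(x)$ records which level of $\mathcal{T}_n(x)$ the point $x$ inhabits.

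Now invoke the one-one hypothesis of the lemma: the word $w_n(x)$ determines the tower $\mathcal{T}_n(x)$, and together with $a_n(x)$ this determines the specific level, which is precisely an atom of $\zeta_n$. Thus for each $w \in \mathcal{W}_n$ and each $0 \leq j < q_n$ the set $\{x : w_n(x) = w,\ a_n(x) = j\}$ is (up to a null set) an atom of $\zeta_n$, and is manifestly a member of $\bigvee_{i\in\Z} T^{-i}(\mathcal{Q})$ since $w_n$ and $a_n$ are Borel functions of the $\mathcal{Q}$-name. Hence $\zeta_n \subseteq \bigvee_{i\in\Z} T^{-i}(\mathcal{Q})$ for all $n$, and generation follows.

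The main obstacle I anticipate is carefully justifying that the parsing functions $w_n, a_n$ are well-defined and measurable on a $T$-invariant set of full $\mu$-measure. This is essentially a bookkeeping use of strong unique readability combined with the fact that the shift-invariant measure concentrates on $S \subseteq \mathbb{K}$; the analytic content is already present in theorem \ref{theorem 46} and corollary \ref{corollary 53}, while the injectivity hypothesis is the purely combinatorial ingredient that translates word data back into dynamical tower data.
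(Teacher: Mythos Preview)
Your proof is correct and shares the paper's high-level strategy: show that each atom of $\zeta_n=H_n\xi_n$ lies (up to null sets) in the $T$-invariant $\sigma$-algebra generated by $\mathcal{Q}$, then invoke that $\{\zeta_n\}$ generates $\mathcal{B}$.

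The execution differs, though. The paper avoids the parsing machinery entirely: for a level $P$ (the $t$-th level of a tower $\mathcal{T}$ with $(\tau_n,\mathcal{Q})$-name $w=(S_{i_0},\dots,S_{i_{q_n-1}})$) it writes down the explicit equality
\[
P\cap\Gamma_n \;=\; \Bigl(\,\bigcap_{j=0}^{q_n-1} T^{\,t-j}(S_{i_j})\Bigr)\cap\Gamma_n,
\]
checking the two inclusions directly---one from correctness of the $\tau_n$-name on $\Gamma_n$, the other from the injectivity hypothesis. This finite cylinder-set identity sidesteps exactly the obstacle you flagged: there is no need to verify that global parsing functions $w_n,a_n$ are well-defined and measurable on $S$, nor to invoke lemma~\ref{lemma 52}, corollary~\ref{corollary 53}, or strong unique readability. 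Your route works, but buys its conclusion with heavier tools; the paper's version is self-contained at the level of finite words and the sets $\Gamma_n$.
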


\begin{proof}
Without loss of generality assume that $H_0$ is the identity map. Since $\{H_n\xi_n\}_{n\in\N}$ is a decreasing and generating sequence of partitions, and $\mu(\Gamma_n)\nearrow 1$, we conclude that $\{H_n\xi_n\cap\Gamma_n\}_{n=\N}$ also generate $\mathcal{B}$. So we will be done once we show that $H_n\xi_n\cap\Gamma_n$ belongs to the smallest translation invariant $\sigma$-algebra generated by $\{\vee_{n=-N}^{N} T^i(\mathcal{Q}\cup\{B,E\})\}_{N\in\N}$.

Each $P\in H_n\xi_n$ is the $t$-th level of some tower $\mathcal{T}$ for $\tau_n$. Let $w\in(\Sigma\cup\{b,e\})^{q_n}$ be the $(\tau_n,\mathcal{Q})$-name of $\mathcal{T}$. So the $j$-th letter of $w$ determines and $S_{j_i}\in(\{B\}\cup\{E\}\cup\{A_i:i<s_0\})$. Since $(\tau_n,\mathcal{Q})$-name of $\mathcal{T}$ is correct on $\Gamma_n$,
\begin{align}
P\cap\Gamma_n\subseteq \bigcap_{j=0}^{q_n-1}T^{t-j}(S_{i_j})\cap\Gamma_n
\end{align}
On the other hand, since the map sending towers to names $w$ is one to one, we see
\begin{align}
P\cap\Gamma_n\supseteq \bigcap_{j=0}^{q_n-1}T^{t-j}(S_{i_j})\cap\Gamma_n
\end{align}
This completes the proof.
\end{proof}

\begin{lemma}\label{lemma 55}
If the AbC construction is done satisfying the Requirements 1, 2 and 3 then $\mathcal{Q}$ generates the transformation $T$.
\end{lemma}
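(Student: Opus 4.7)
The plan is to verify the hypothesis of Lemma \ref{lemmma 54}, namely that for every $n$ the map sending a tower $\mathcal{T}$ of $\tau_n$ to its associated $\mathcal{Q}$-name in $\mathcal{W}_n$ is injective. Once this is established, Lemma \ref{lemmma 54} immediately gives the conclusion. I will prove the injectivity by induction on $n$, exploiting Requirement 3 at each step together with the intrinsic structure of the circular operator $\mathcal{C}$ defined in \eqref{formula C}.

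For the base case $n=0$, the towers of $\tau_0$ are indexed by $s<s_0$ and their $\mathcal{Q}$-names are the one-letter words $a_s\in\Sigma=\mathcal{W}_0$, which are distinct by construction. For the inductive step, suppose that the names $u_0,\ldots,u_{s_n-1}\in\mathcal{W}_n$ attached to the $s_n$ towers of $\tau_n$ are pairwise distinct. A tower $\mathcal{T}$ of $\tau_{n+1}$ is determined by an index $s^*<s_{n+1}$, and by the definition of the construction sequence its $\mathcal{Q}$-name is
\begin{align*}
\mathcal{C}_{n+1}(w_0,\ldots,w_{k_n-1}),\qquad\text{where}\qquad w_j=u_{(j_0,\ldots,j_{k_n-1})_{s^*}[j]}.
\end{align*}
By Requirement 3 the map $s^*\mapsto(j_0,\ldots,j_{k_n-1})_{s^*}$ is one-to-one, so distinct values of $s^*$ produce distinct sequences of indices in $\{0,\ldots,s_n-1\}^{k_n}$; combined with the inductive hypothesis this yields distinct tuples $(w_0,\ldots,w_{k_n-1})\in(\mathcal{W}_n)^{k_n}$ for distinct towers.

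It remains to see that $\mathcal{C}_{n+1}$ is itself injective on these tuples. This follows directly from the explicit form of $\mathcal{C}$: the positions of the prefix and suffix blocks $b^{j_{q_n-i}}$ and $e^{j_i}$ depend only on the parameters $p_n,q_n,k_n,l_n$ (not on the $w_j$'s), so the interior blocks $w_j^{l_n-1}$ appear in fixed, known positions inside $\mathcal{C}_{n+1}(w_0,\ldots,w_{k_n-1})$. Reading off any such block and using the inductive distinctness of the $u_i$'s recovers the tuple $(w_0,\ldots,w_{k_n-1})$ uniquely. Hence distinct towers of $\tau_{n+1}$ receive distinct words in $\mathcal{W}_{n+1}$, completing the induction.

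The main obstacle, and the reason all three Requirements are needed, is ensuring that the name map is genuinely injective rather than merely being injective on the level of tuples: Requirement 1 guarantees that $s_n\to\infty$ so that $\mathcal{Q}$ will eventually separate orbits within a single tower; Requirement 2 is what allows the tuple $(j_0,\ldots,j_{k_n-1})_{s^*}$ to be defined at all (and thus the words $w_j$ to be well-defined); and Requirement 3 provides the crucial injectivity input to the inductive step. With these three together, the circular structure of $\mathcal{C}$ transports injectivity from stage $n$ to stage $n+1$, and Lemma \ref{lemmma 54} delivers the conclusion that $\mathcal{Q}$ generates $T$.
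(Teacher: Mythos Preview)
Your proof is correct and follows essentially the same route as the paper: an induction on $n$ showing that the tower-to-name map is injective (base case $n=0$ trivial; inductive step via Requirement~3 giving distinct index tuples, the inductive hypothesis giving distinct $n$-words, and the fixed boundary structure of $\mathcal{C}$ making the tuple recoverable), followed by an appeal to Lemma~\ref{lemmma 54}. Your argument is in fact slightly more explicit than the paper's about \emph{why} $\mathcal{C}$ is injective on tuples, and your closing remarks on the roles of Requirements~1 and~2 are commentary rather than logical steps---the induction itself needs only Requirement~3 (with Requirement~2 implicitly making the tuple $(j_0,\ldots,j_{k_n-1})_{s^*}$ well-defined).
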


\begin{proof}
We will show that the $\mathcal{Q}$-names associated with any two $\tau_n$-towers $\mathcal{T}$ and $\mathcal{T}$ are different. 

For $n=0$ it is clear by definition. We assume that this is true upto $n$ and we show it for $n+1$. Let $R_{0,s}^{n+1}$ and $R_{0,s'}^{n+1}$ be the bases of $\mathcal{T}$ and $\mathcal{T}'$ respectively. Requirement 3 implies that the $k_n$-tuples $(j_0,\ldots,j_{k_n-1})$  and $(j_0',\ldots,j_{k_n-1}')$  associated with $s$ and $s'$ are distinct. Let $w_t$ be the names of the towers of $\tau_n$ with bases $R^n_{0,s}$. Then all the $w_t$ s are distinct as per our induction hypothesis. Also by the Theorem \ref{theorem 46}, the $\mathcal{Q}$-names of $\mathcal{T}$ and $\mathcal{T}'$ are given by $\mathcal{C}(w_{j_0},\ldots,w_{j_{k_n-1}})$ and $\mathcal{C}(w_{j_0'},\ldots,w_{j_{k_n-1}'})$ respectively. Since $(j_0,\ldots,j_{k_n-1})$  and $(j_0',\ldots,j_{k_n-1}')$ are different, $\mathcal{C}(w_{j_0},\ldots,w_{j_{k_n-1}})$ and $\mathcal{C}(w_{j_0'},\ldots,w_{j_{k_n-1}'})$ are different.

We conclude the proof using lemma \ref{lemma 55}. 
\end{proof}

\begin{theorem}
Suppose the measure preserving system $(\T^2,\mathcal{B},\mu,T)$ is built by the AbC method using fast growing coefficients and $h_n$ s in the scheme satisfies requirements  1 to 3. Let $\mathcal{Q}$ be the partition defined earlier. Then the $\mathcal{Q}$-names describe a strongly uniform circular construction sequence $\{\mathcal{W}_n\}_{n\in\N}$. Let $\mathbb{K}$ be the associated circular system and $\phi:\T^2\to\mathbb{K}$ be the map sending each $x\in \T^2$ to its $\mathcal{Q}$-name. Then $\phi$ is one to one on a set of $\mu$ measure one. Moreover, there is a unique non-atomic shift invariant measure $\nu$ concentrating on the range of $\phi$ and this measure is ergodic. In particular $(\T^2,\mathcal{B},\mu, T)$ is isomorphic to $(\mathbb{K},\mathcal{B},\nu,\text{sh})$. 
\end{theorem}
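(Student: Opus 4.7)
The plan is to assemble the theorem from four pieces that are already either proved or essentially laid out in the preceding material: (i) strong uniformity of the construction sequence, (ii) the fact that almost every point's $\mathcal{Q}$-name lies in $\mathbb{K}$, (iii) injectivity of $\phi$ almost everywhere, and (iv) uniqueness and ergodicity of the invariant measure on the circular system.

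First I would verify that $\{\mathcal{W}_n\}_{n\in\N}$ is strongly uniform in the sense of the earlier definition. By construction each $w \in \mathcal{W}_{n+1}$ has the form $\mathcal{C}(w_0,\ldots,w_{k_n-1})$ where $(w_0,\ldots,w_{k_n-1})$ is the sequence associated to a tower $\mathcal{T}$ of $\tau_{n+1}$. By Requirement~2, for each base index $s^*<s_{n+1}$ and each $j<s_n$, the set $\{t<k_n : h_{n+1}([\tfrac{t}{k_nq_n},\tfrac{t+1}{k_nq_n})\times[\tfrac{s^*}{s_{n+1}},\tfrac{s^*+1}{s_{n+1}}))\subset R^n_{0,j}\}$ has cardinality exactly $k_n/s_n$; hence each $n$-word $u_j$ (the $\mathcal{Q}$-name of the tower with base $H_n(R^n_{0,j})$) occurs exactly $f_n := k_n/s_n$ times in every $w \in P_{n+1}$. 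Combined with the fast growth of $l_n$ (which ensures $\sum 1/l_n < \infty$), the earlier lemma about circular construction sequences immediately gives strong uniformity, and Requirement~3 together with Lemma~\ref{lemma u readable} gives strong unique readability. So $\mathbb{K}$ is a strongly uniform circular system.

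Next, I would invoke Corollary~\ref{corollary 53}, which already asserts that almost every $x \in \T^2$ has its $\mathcal{Q}$-name in $S \subset \mathbb{K}$. Thus $\phi: \T^2 \to \mathbb{K}$ is defined, measurable, and shift-equivariant on a set of full measure, and $\phi_*\mu$ is a shift-invariant Borel measure on $\mathbb{K}$ concentrated on $S$. The earlier lemma on uniform circular systems then tells us there is a \emph{unique} non-atomic shift-invariant measure $\nu$ on $\mathbb{K}$ and that it is ergodic; since $\phi_*\mu$ is non-atomic (because $(\T^2,\mu,T)$ is non-atomic and ergodic) and shift-invariant, we must have $\phi_*\mu = \nu$. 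This makes $\phi$ a factor map from $(\T^2,\mathcal{B},\mu,T)$ to $(\mathbb{K},\mathcal{B},\nu,\text{sh})$.

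Finally, to upgrade this factor map to an isomorphism, I would appeal to Lemma~\ref{lemma 55}, which — under Requirements~1--3 — says that $\mathcal{Q}$ generates $T$. Generation is precisely the assertion that the map $\phi$ is one-to-one on a set of full $\mu$-measure: if $\phi(x)=\phi(y)$ then $x$ and $y$ have identical $(T,\mathcal{Q})$-names, and so they are not separated by any element of the generated $\sigma$-algebra, hence they agree almost surely. Combining the factor map with almost sure injectivity yields the desired measure-theoretic isomorphism between $(\T^2,\mathcal{B},\mu,T)$ and $(\mathbb{K},\mathcal{B},\nu,\text{sh})$. The only step that requires any real thought beyond bookkeeping is verifying strong uniformity directly from Requirement~2; the rest is an application of Lemmas~\ref{lemma 52}, \ref{lemmma 54}, \ref{lemma 55} and Corollary~\ref{corollary 53} in the proper order.
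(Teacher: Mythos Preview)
Your proposal is correct and follows essentially the same route as the paper's proof: establish (strong) uniformity of the construction sequence, use Lemma~\ref{lemma 52}/Corollary~\ref{corollary 53} to land $\phi$ in $S$, invoke the uniqueness of the non-atomic invariant measure on a uniform circular system, and then apply Lemma~\ref{lemma 55} (that $\mathcal{Q}$ generates under Requirements~1--3) to upgrade the factor map to an isomorphism. Your write-up is in fact more explicit than the paper's --- you spell out why Requirement~2 yields the constant multiplicity $k_n/s_n$ needed for strong uniformity --- though your parenthetical justification that $\phi_*\mu$ is non-atomic ``because $(\T^2,\mu,T)$ is non-atomic and ergodic'' is not quite the right reason (a factor of a non-atomic system can be atomic); the cleaner route is to first obtain the isomorphism from generation and read off non-atomicity afterward, or to note that points of $S$ are aperiodic.
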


\begin{proof}
With fast growing $l_n$ parameters, we know that the sequence $\mathcal{W}_n$ forms a uniform circular sequence and hence there is a unique shift invariant non-atomic ergodic measure $\nu$ on $S\subset\mathbb{K}$. Using lemma \ref{lemma 52}, we have that the range of $\phi$ is a subset of the set $S$. So the factor determined by $\phi$ is isomorphic to $(S,\mathcal{B},\mu,\text{sh})$

Since requirements 1,2 and 3 are satisfied, the partition $\mathcal{Q}$ generates $\T^2$ and hence $\phi$ is an isomorphism.
\end{proof}

\begin{maintheorem}\label{theorem 57}
Consider three sequence of natural numbers $\{k_n\}_{n\in\N},\{l_n\}_{n\in\N},\{s_n\}_{n\in\N}$ tending to infinity. Assume that 
\begin{enumerate}
\item $l_n$ grows sufficiently fast.
\item $s_n$ divides both $k_n$ and $s_{n+1}$.
\end{enumerate} 
Let $\{\{\mathcal{W}_n\}\}_{n\in\N}$ be a circular construction sequence in an alphabet $\Sigma\cup\{b,e\}$ such that
\begin{enumerate}
\item $\mathcal{W}_0=\Sigma$, $|\mathcal{W}_{n+1}|=s_{n+1}$ for any $n\geq 1$.
\item (Uniform) For each $w'\in\mathcal{W}_{n+1}$, and $w\in\mathcal{W}_n$, if $w'=\mathcal{C}(w_0,\ldots,w_{K_n-1})$, then here are $k_n/s_n$ many $j$ with $w = w_j$.
\end{enumerate}
Then,
\begin{enumerate}
\item $\{\mathcal{W}_n\}_{n\in\N}$ is a uniform construction sequence. If $\mathbb{K}$ is the associated symbolic shift then there is a unique non atomic ergodic measure $\nu$ on $\mathbb{K}$.
\item There is a real-analytic measure preserving transformation $T$ defined on $\T^2$ such that the system $(\T^2,\mathcal{B},\mu,T)$ is isomorphic to $(\mathbb{K},\mathcal{B},\nu,sh)$.
\end{enumerate}
\end{maintheorem}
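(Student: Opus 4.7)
The plan is to use the given construction sequence $\{\mathcal{W}_n\}_{n\in\N}$ as a blueprint for running the abstract untwisted AbC method of Section \ref{section abstract AbC}, then invoke Theorem \ref{theorem analytic AbC} to realize the resulting abstract transformation as a diffeomorphism in $\text{Diff }^\omega_\rho(\T^2,\mu)$, and finally apply the symbolic representation Theorem \ref{theorem 57}'s predecessor to identify $\mathbb{K}$ as the symbolic factor. Item (1) is essentially immediate from Lemma \ref{lemma 11} once uniformity is observed: the hypothesis that every $w\in\mathcal{W}_n$ appears exactly $k_n/s_n$ times in each parsing of any $w'\in\mathcal{W}_{n+1}$ is precisely the strong uniformity condition, so the existence and uniqueness of the non-atomic shift invariant ergodic measure $\nu$ on $\mathbb{K}$ follows directly.

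For item (2), I would first fix parameters $p_n,q_n,\a_n$ via the relations in \eqref{para cons} using the given $k_n,l_n$. I would then extract the combinatorial data from the construction sequence as follows: enumerate $\mathcal{W}_n = \{u^n_0,\ldots,u^n_{s_n-1}\}$, and for each $w'\in\mathcal{W}_{n+1}$ with circular parsing $w'=\mathcal{C}(w_0,\ldots,w_{k_n-1})$ define a word $W_{w'}\in\{0,\ldots,s_n-1\}^{k_n}$ by $W_{w'}(t)=i \iff w_t = u^n_i$. The hypothesis guarantees that each letter $i\in\{0,\ldots,s_n-1\}$ appears exactly $k_n/s_n$ times in $W_{w'}$, which is exactly the input required by Lemma \ref{lemma generating 1}. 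This yields a measure preserving permutation $h_{n+1}$ of $\xi_{k_nq_n,s_{n+1}}$ commuting with $R^{\a_n}$ and satisfying Requirements 1--3 of Section \ref{section requirements} (Requirements 1 and 2 are immediate, and Requirement 3 holds provided we choose the labeling $w'\mapsto W_{w'}$ to be injective, which is forced by the strong unique readability of circular construction sequences).

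Having set up the abstract AbC data $(k_n,l_n,s_n,h_n)$, I would choose the $l_n$ growing fast enough in the sense of \eqref{fast enough} — the hypothesis explicitly permits this — and form $H_n,T_n$ as in Section \ref{section abstract AbC}. Theorem \ref{theorem analytic AbC} then produces a real-analytic diffeomorphism $T^{(\mathfrak{a})}\in\text{Diff }^\omega_\rho(\T^2,\mu)$ which is measure theoretically isomorphic to the abstract limit transformation $T$. On the other hand, the symbolic representation machinery developed in Section 7 (culminating in Lemma \ref{lemma 55} and the theorem just before Theorem \ref{theorem 57}) applies to $T$ under Requirements 1--3 and gives that the $\mathcal{Q}$-names of $T$ produce a strongly uniform circular construction sequence, which by construction coincides with the given $\{\mathcal{W}_n\}_{n\in\N}$, and consequently $T$ is isomorphic to $(\mathbb{K},\mathcal{B},\nu,\text{sh})$. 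Composing the two isomorphisms yields the required isomorphism between $(\T^2,\mathcal{B},\mu,T^{(\mathfrak{a})})$ and $(\mathbb{K},\mathcal{B},\nu,\text{sh})$.

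The main obstacle I would expect is bookkeeping around the compatibility of the two passages: verifying that the construction sequence that \emph{emerges} from the symbolic coding of the AbC diffeomorphism built from $(h_n)$ is genuinely the same as the one we \emph{started with}, rather than some relabeling. This requires carefully matching the alphabet $\Sigma$ of $\mathcal{W}_0$ with the atoms $A_i$ of the initial partition $\mathcal{Q}$, and tracking through the inductive definition $\mathcal{W}_{n+1}=\{\mathcal{C}(w_0,\ldots,w_{k_n-1})\}$ that the sequences associated to the towers $\mathcal{T}$ of $\tau_{n+1}$ are exactly the $W_{w'}$ used above. Once this bookkeeping is set up, everything else reduces to quoting Theorems \ref{theorem analytic AbC}, \ref{theorem 46}, Lemma \ref{lemma 55}, and the ergodic uniqueness from Lemma \ref{lemma 11}.
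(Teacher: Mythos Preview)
Your proposal is correct and follows essentially the same approach as the paper: extract the prewords $P_{n+1}\subset(\mathcal{W}_n)^{k_n}$ from the circular construction sequence, feed them into Lemma \ref{lemma generating 1} to build the conjugating maps $h_{n+1}$, check Requirements 1--3 (the paper phrases Requirement 3 as ``the words in $P_{n+1}$ are distinct'', which is your injectivity observation), and then quote the real-analytic realization Theorem \ref{theorem analytic AbC} together with the symbolic representation theorem immediately preceding Theorem \ref{theorem 57}. Your bookkeeping worry about the emerging construction sequence matching the given one is exactly what the paper leaves implicit; it is handled by the inductive definition of the $\mathcal{Q}$-names via Theorem \ref{theorem 46}, so no additional argument is needed beyond what you outlined.
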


\begin{proof}
Suppose that we have defined $\{h_m\}_{m=0}^{n}$ in the AbC process. From the definition of uniform circular systems (see \ref{definition circular systems}), we can find $P_{n+1}\subseteq(W_n)^{k_n}$ such that $\mathcal{W}_{n+1}$ is the collection of $w'$ such that for some sequence $(w_0,\ldots,w_{k_n-1})\in P_{n+1},w'=\mathcal{C}(w_0,\ldots, w_{k_n-1})$. We enumerate $P_{n+1}=\{w_0^{\prime},\ldots,w_{s_{n+1}-1}^{\prime}\}$. We apply lemma \ref{lemma generating 1} to get $h_{n+1}$ from $w_0^{\prime},\ldots,w_{s_{n+1}-1}^{\prime}$.

With $\{h_n\}_{n\in\N}$ constructed as above, we note that requirement 1 is satisfied because $s_n\nearrow \infty$, requirement 2 and 3 follows from the fact that the words in $P_{n+1}$ are distinct. 
\end{proof}

We note that the sequence $\{P_n\}_{n\in\N}$ determines $h_n$ in the AbC method which in turn determines a  neighborhood in the real-analytic topology in which the resulting AbC diffeomorphism belongs. Conversely, different choices of $P_n$ gives distant $h_n$'s and hence distant $h_n^{(\mathfrak{a})}$ in the analytic topology. We record this as this is going to be crucial in the proof of an anti-classification result. 

\begin{proposition}\label{proposition 58}
Suppose $\{\mathcal{U}_n\}_n\in\N$ and $\{\mathcal{W}_n\}_n\in\N$ are two construction sequences for circular systems and $M$ is such that $\mathcal{U}_n=\mathcal{W}_n\;\forall n\leq M$. If $S$ and $T$ are the real-analytic realizations of the circular systems using the AbC method given in this paper, then
\begin{align}
d_\rho(S,T)<\e_M
\end{align}  
\end{proposition}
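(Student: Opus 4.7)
The plan is to reduce everything to the Cauchy estimate already contained in the proof of Theorem \ref{theorem analytic AbC}. Since the two construction sequences agree on the first $M$ layers of words, I would first verify that the corresponding AbC building blocks $h_n$, and then their real-analytic approximations $h_n^{(\mathfrak{a})}$, can be taken to coincide for every $n<M$. This forces $S_{M-1}^{(\mathfrak{a})}=T_{M-1}^{(\mathfrak{a})}$, after which the triangle inequality together with the Cauchy bound $d_\rho(T_n^{(\mathfrak{a})},T_{n+1}^{(\mathfrak{a})})<\e_n/2$ of \eqref{remaark 39} controls $d_\rho(S,T)$ by a tail of $\sum \e_n$ starting at $M-1$, which is in turn bounded by $\e_M$ under the standing summability assumption.

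First I would record that $\{\mathcal{W}_n\}_{n<M}$ rigidly determines $h_n$ for $n<M$. Strong unique readability says $P_n\subseteq(\mathcal{W}_{n-1})^{k_{n-1}}$ is the unique set with $\mathcal{W}_n=\{\mathcal{C}(w_0,\ldots,w_{k_{n-1}-1}):(w_0,\ldots,w_{k_{n-1}-1})\in P_n\}$, so $P_n$ is a function of the pair $(\mathcal{W}_{n-1},\mathcal{W}_n)$. I would fix once and for all a canonical enumeration $P_n=\{w_0',\ldots,w_{s_n-1}'\}$ (lexicographic, say), a canonical choice of $h_n$ via Lemma \ref{lemma generating 1}, and a canonical choice of $h_n^{(\mathfrak{a})}$ via Theorem \ref{theorem approx} with a pre-specified error schedule. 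Under the hypothesis $\mathcal{U}_n=\mathcal{W}_n$ for $n<M$, running both constructions through this common protocol then yields $h_n^{(\mathfrak{a},\mathcal{U})}=h_n^{(\mathfrak{a},\mathcal{W})}$ for every $n<M$. Since $\a_n$ depends only on the shared parameter sequences $\{k_n\}$, $\{l_n\}$, this gives $H_{M-1}^{(\mathfrak{a},\mathcal{U})}=H_{M-1}^{(\mathfrak{a},\mathcal{W})}$ and hence $S_{M-1}^{(\mathfrak{a})}=T_{M-1}^{(\mathfrak{a})}$.

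With this common stage-$(M-1)$ map in hand, the telescoping estimate already present in the proof of Theorem \ref{theorem analytic AbC} gives
\begin{align*}
d_\rho(S,T)\leq d_\rho\big(S,S_{M-1}^{(\mathfrak{a})}\big)+d_\rho\big(T_{M-1}^{(\mathfrak{a})},T\big)< 2\sum_{n=M-1}^{\infty}\e_n/2=\sum_{n=M-1}^{\infty}\e_n,
\end{align*}
which, under the tail condition on $\{\e_n\}$ fixed at the outset of Section 6.2 (after a harmless tightening of the sequence if needed), is dominated by $\e_M$.

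The main obstacle is not the analytic estimate --- which is a triangle inequality together with the Cauchy bound we already have --- but the bookkeeping of the first step: Lemma \ref{lemma generating 1} and Theorem \ref{theorem approx} both only assert \emph{existence} of appropriate diffeomorphisms, so one must take care to invoke them as \emph{deterministic} procedures so that identical input data really do yield identical output diffeomorphisms in the two constructions rather than merely isomorphic ones. Once that determinism is formalized, the rest of the argument is routine.
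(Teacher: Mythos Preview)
Your proposal is correct and follows essentially the same route as the paper's proof: identify a stage at which the two AbC approximants coincide, then bound the distance to each limit by the Cauchy estimate \eqref{equation 26} and use the tail condition on $\{\e_n\}$. The paper simply asserts $S_M=T_M$ (with an implicit ``the parameters agree for small $n$'') and invokes \eqref{remaark 39}--\eqref{equation 26} directly; your write-up is in fact more careful on two points the paper glosses over --- the determinism of the choices in Lemma~\ref{lemma generating 1} and Theorem~\ref{theorem approx}, and the precise index ($M-1$ versus $M$) at which coincidence is guaranteed by the hypothesis $\mathcal{U}_n=\mathcal{W}_n$ for $n<M$.
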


\begin{proof}
The sequences $\{k_n^\mathcal{U},l_n^\mathcal{U},h_n^\mathcal{U},s_n^\mathcal{U}\}_{n\in\N}$ and $\{k_n^\mathcal{W},l_n^\mathcal{W},h_n^\mathcal{W},s_n^\mathcal{W}\}_{n\in\N}$ associated with the two construction sequences determining approximations $\{S_n\}_{n\in\N}$ and $\{T_n\}_{n\in\N}$ to diffeomorphisms $S$ and $T$ has the property 
 $k_n^\mathcal{U}=k_n^\mathcal{W},l_n^\mathcal{U}=l_n^\mathcal{W},h_n^\mathcal{U}=h_n^\mathcal{W},s_n^\mathcal{U}=s_n^\mathcal{W}$ for all $n\leq M$. So $S_M=T_M$. It follows from equations \ref{eq:epsdecrease} and \ref{equation 26} that
 \begin{align}
 d_\rho(S_m,S)<\e_M/2\\
 d_\rho(T_m,T)<\e_M/2
 \end{align}
Combining the two together with the triangle inequality we get the result.
\end{proof}

\section{Proof of Theorem \ref{theo:main anti}} \label{section main proof}
In this Section we survey the key steps from \cite{FRW} and \cite{FW3} and adapt them to our proof of Theorem \ref{theo:main anti} in the real-analytic category.

\subsection{Trees, groups and equivalence relations} \label{subsec:trees}
During the construction the following maps will prove useful.
\begin{definition}
\label{def:M-and-s}We define a map $M:\mathcal{T}rees\to\mathbb{N}^{\mathbb{N}}$
by setting $M\left(\mathcal{T}\right)(s)=n$ if and only if $n$ is
the least number such that $\sigma_{n}\in\mathcal{T}$ and $\abs{\sigma_{n}}=s$.
Dually, we also define a map $s:\mathcal{T}rees\to\mathbb{N}^{\mathbb{N}}$
by setting $s\left(\mathcal{T}\right)(n)$ to be the length of the
longest sequence $\sigma_{m}\in\mathcal{T}$ with $m\leq n$. 
\end{definition}

\begin{remark}
When $\mathcal{T}$ is clear from the context we write $M(s)$ and
$s(n)$. We also note that $s(n)\leq n$ and that $s$ as well as
$M$ are continuous functions when we endow $\mathbb{N}$ with the
discrete topology and $\mathbb{N}^{\mathbb{N}}$ with the product
topology.
\end{remark}

Related to the structure of the tree we will specify several equivalence
relations $\mathcal{Q}_{s}^{n}$ on the words $\mathcal{W}_{n}$. For this purpose, we recall
some general notions on equivalence relations.
\begin{definition}
\label{def:equivrel}Let $X$ be a set and $\mathcal{Q}$ as well
as $\mathcal{R}$ equivalence relations on $X$.
\begin{itemize}
\item We write $\mathcal{R}\subseteq\mathcal{Q}$ and say that $\mathcal{R}$
\emph{refines} $\mathcal{Q}$ if considered as sets of ordered pairs
we have $\mathcal{R}\subseteq\mathcal{Q}$.
\item We define the \emph{product equivalence relation} $\mathcal{Q}^{n}$
on $X^{n}$ by setting $x_{0}x_{1}\dots x_{n-1}\sim x_{0}^{\prime}x_{1}^{\prime}\dots x_{n-1}^{\prime}$
if and only if $x_{i}\sim x_{i}^{\prime}$ for all $i=0,1,\dots,n-1$.
\end{itemize}
\end{definition}

In \cite{FRW} groups associated to trees are used to approximate conjugacies. These groups are direct sums of $\mathbb{Z}_{2}=\mathbb{Z}/2\mathbb{Z}$
and are called \emph{groups of involutions}. 

If $G=\sum_{i\in I}\left(\mathbb{Z}_{2}\right)_{i}$ and $B=\left\{ r_{i}:i\in I\right\} $
is a distinguished basis, then we call the elements $r_{i}\in B$
\emph{generators} and we have a well-defined notion of \emph{parity}
for elements in $G$: an element $g\in G$ is called \emph{even} if
it can be written as the sum of an even number of elements in $B$.
Otherwise, it is called \emph{odd}. Parity is preserved under homomorphisms
sending the distinguished basis of one group to the distinguished
basis of the other. This also yields that for an inverse limit system
of groups of involutions $\left\{ G_{s}:s\in\mathbb{N}\right\} $
(where each group has a distinguished set of generators) with homomorphisms
$\rho_{t,s}:G_{t}\to G_{s}$ for $s<t$, the elements of the inverse
limit $\underleftarrow{\lim}G_{s}$ have a well defined parity. 

For a tree $\mathcal{T}\subset\mathbb{N}^{\mathbb{N}}$ we assign
to each level $s$ a group of involutions $G_{s}\left(\mathcal{T}\right)$
by taking a sum of copies of $\mathbb{Z}_{2}$ indexed by the nodes
of $\mathcal{T}$ at level $s$. Moreover, we view these nodes of
$\mathcal{T}$ at level $s$ as the distinguished generators of 
\[
G_{s}\left(\mathcal{T}\right)=\sum_{\tau\in\mathcal{T},\,\abs{\tau}=s}\left(\mathbb{Z}_{2}\right)_{\tau}.
\]
For levels $s<t$ of $\mathcal{T}$ we have a canonical homomorphism
$\rho_{t,s}:G_{t}\left(\mathcal{T}\right)\to G_{s}\left(\mathcal{T}\right)$
that sends a generator $\tau$ of $G_{t}\left(\mathcal{T}\right)$
to the unique generator $\sigma$ of $G_{s}\left(\mathcal{T}\right)$
that is an initial segment of $\tau$. We denote the inverse limit
of $\left\langle G_{s}\left(\mathcal{T}\right),\rho_{t,s}:s<t\right\rangle $
by $G_{\infty}\left(\mathcal{T}\right)$ and we let $\rho_{s}:G_{\infty}\left(\mathcal{T}\right)\to G_{s}\left(\mathcal{T}\right)$
be the projection map.

Since there is a one-to-one correspondence between the infinite branches
of $\mathcal{T}$ and infinite sequences $\left(g_{s}\right)_{s\in\mathbb{N}}$
of generators $g_{s}\in G_{s}\left(\mathcal{T}\right)$ with $\rho_{t,s}\left(g_{t}\right)=g_{s}$
for $t>s$, we obtain the following characterization.
\begin{fact} \label{fact:illgroups}
Let $\mathcal{T}\subset\mathbb{N}^{\mathbb{N}}$ be a tree. Then 
\begin{enumerate}
    \item $G_{\infty}\left(\mathcal{T}\right)$
has a nonidentity element of odd parity if and only if $\mathcal{T}$
is ill-founded (i.e. has an infinite branch).
\item $G_{\infty}\left(\mathcal{T}\right)$
has a nonidentity element of even parity if and only if $\mathcal{T}$
has at least two infinite branches).
\end{enumerate} 
\end{fact}

In order to make the elements of $G_{\infty}\left(\mathcal{T}\right)$
to correspond to conjugacies, one builds symmetries into our construction
using the following finite approximations to $G_{\infty}\left(\mathcal{T}\right)$.
We let $G_{0}^{n}\left(\mathcal{T}\right)$ be the trivial group and
for $s>0$ we let 
\[
G_{s}^{n}\left(\mathcal{T}\right)=\sum\left(\mathbb{Z}_{2}\right)_{\tau},\text{ where the sum is taken over }\tau\in\mathcal{T}\cap\left\{ \sigma_{m}:m\leq n\right\} ,\,\abs{\tau}=s.
\]

When $\mathcal{T}$ is clear from the context, we will frequently
write $G_{s}^{n}$.

During the course of construction we will consider group actions of
$G_{s}^{n}$ on our quotient spaces $\mathcal{W}_{n}/\mathcal{Q}_{s}^{n}$.
Here, we will need to control systems of such group actions on the
refining equivalence relations. For that purpose, the following general
definitions will prove useful.
\begin{definition}
Suppose
\begin{itemize}
\item $\mathcal{Q}$ and $\mathcal{R}$ are equivalence relations on a set
$X$ with $\mathcal{R}$ refining $\mathcal{Q}$,
\item $G$ and $H$ are groups with $G$ acting on $X/\mathcal{Q}$ and
$H$ acting on $X/\mathcal{R}$,
\item $\rho:H\to G$ is a homomorphism.
\end{itemize}
Then we say that the $H$ action is \emph{subordinate} to the $G$
action if for all $x\in X$, whenever $[x]_{\mathcal{R}}\subset[x]_{\mathcal{Q}}$
we have $h[x]_{\mathcal{R}}\subset\rho(h)[x]_{\mathcal{Q}}$. 
\end{definition}

\begin{definition}
If $G$ acts on $X$, then the canonical \emph{diagonal action} of
$G$ on $X^{n}$ is defined by 
\[
g\left(x_{0}x_{1}\dots x_{n-1}\right)=gx_{0}\,gx_{1}\dots gx_{n-1}.
\]
If $G$ is a group of involutions with a distinguished collection
of free generators, then we define the \emph{skew diagonal action}
on $X^{n}$ by setting 
\[
g\left(x_{0}x_{1}\dots x_{n-1}\right)=gx_{n-1}\,gx_{n-2}\dots gx_{0}
\]
for any canonical generator $g$.
\end{definition}

\begin{remark}
We stress that the skew diagonal actions by elements of $G$ of odd
parity reverse the orders of the $x_{i}$'s while group elements of
even parity preserve the order. 
\end{remark}
\begin{remark}
Recalling the notion of a product equivalence relation from Definition
\ref{def:equivrel} we can identify $X^{n}/\mathcal{Q}^{n}$ with
$\left(X/\mathcal{Q}\right)^{n}$ in an obvious way. Then we can also
extend an action of $G$ on $X/\mathcal{Q}$ to the diagonal or skew
diagonal actions on $\left(X/\mathcal{Q}\right)^{n}$ in a straightforward
way.
\end{remark}

\subsection{Specifications and timing assumptions} \label{subsec:speci}

In \cite{FRW} the sequences $(\mathtt{W}_n)_{n\in \N}$, equivalence relations $\mathcal{Q}^n_s$, and group actions of $G^n_s$ satisfying several specifications are constructed inductively. For each $n\in \N$ with $\sigma_n \in \TT$ one constructs a set of words $\mathtt{W}_n=\mathtt{W}_n(\TT)$ in the basic alphabet $\{0,1\}$ and the construction depends only on $\TT\cap \Meng{\sigma_m}{m\leq n}$. To start we set $\mathtt{W}_0=\{0,1\}$. 

\begin{itemize}
    \item[(E1)] All words in $\mathtt{W}_n$ have the same length $\mathtt{h}_n$ and $|\mathtt{W}_n|\coloneqq s_n$ is a power of $2$.
    \item[(E2)] If $\sigma_m$ and $\sigma_n$ are consecutive elements of $\TT$, then every word in $\mathtt{W}_{n}$ is built by concatenating words in $\mathtt{W}_m$. Every word in $\mathtt{W}_m$ occurs in each word of $\mathtt{W}_{n}$ exactly $p^2_n$ many times. Here, $p_n$ is a large prime number chosen when $\sigma_n$ is considered.
    \item[(E3)] If $\sigma_m$ and $\sigma_n$ are consecutive elements of $\TT$, $w\in \mathtt{W}_{n}$ and $w=pw_1\dots w_l s$, where each $w_i \in \mathtt{W}_m$ and $p$ and $s$ are strings in $\{0,1\}$ of length less than the length of the $m$-words, then both $p$ and $s$ are the empty words.
    
    If $w,w'\in \mathtt{W}_{n}$, then the first half of $w'$ is not equal to $w$, i.e. if $w=w_1\dots w_{\mathtt{h}_n/\mathtt{h}_m}$ and $w'=w^{\prime}_1 \dots w^{\prime}_{\mathtt{h}_n/\mathtt{h}_m}$, where $w_i,w^{\prime}_i \in \mathtt{W}_m$, and $k=\lfloor \frac{\mathtt{h}_n/\mathtt{h}_m}{2} \rfloor +1$, we have $w_k \dots w_{\mathtt{h}_n/\mathtt{h}_m} \neq w^{\prime}_1 \dots w^{\prime}_{\mathtt{h}_n/\mathtt{h}_m -k-1}$.
\end{itemize}

\begin{remark}
In particular, these specifications say that $\Meng{\mathtt{W}_n}{\sigma_n \in \TT}$ is a uniquely readable and strongly uniform construction sequence.
\end{remark}

In the next step, we specify equivalence relations $\mathcal{Q}^n_s$ on $\mathtt{W}_n$ which are defined for $s\leq s(n)$. For this purpose, we take a decreasing summable sequence $\left(\epsilon_{n}\right)_{n\in\mathbb{N}}$ and a strictly increasing sequence $e(n)$ of positive integers. To start, we let $\mathcal{Q}_{0}^{0}$ be the equivalence relation on $\mathtt{W}_{0}=\{0,1\}$ which has one equivalence class, i.e. both elements of $\{0,1\}$ are equivalent. 
\begin{itemize}
\item[(Q4)] Suppose that $n=M(s)$. Then any two words in the
same $\mathcal{Q}_{s}^{n}$ class agree on an initial segment of length at least $\left(1-\epsilon_{n}\right)\mathtt{h}_{n}$.
\item[(Q5)] For $n\geq M(s)+1$ we can consider words in $\mathtt{W}_{n}$
as concatenation of words from $\mathtt{W}_{M(s)}$ and define $\mathcal{Q}_{s}^{n}$
as the product equivalence relation of $Q_{s}^{M(s)}$.
\item[(Q6)] $\mathcal{Q}_{s+1}^{n}$ refines $\mathcal{Q}_{s}^{n}$ and each
$\mathcal{Q}_{s}^{n}$ class contains $2^{e(n)}$ many $\mathcal{Q}_{s+1}^{n}$
classes.
\end{itemize}

\begin{remark}
Each equivalence relation $\mathcal{Q}_{s}^{n}$ will induce an equivalence
relation on $rev(\mathtt{W}_{n})$, which we will also call $\mathcal{Q}_{s}^{n}$,
as follows: $rev(w),rev(w')\in rev(\mathtt{W}_{n})$ are equivalent
with respect to $\mathcal{Q}_{s}^{n}$ if and only if $w,w'\in\mathtt{W}_{n}$
are equivalent with respect to $\mathcal{Q}_{s}^{n}$. 
\end{remark}
\begin{remark}
By (Q5) we can view $\mathtt{W}_{n}/\mathcal{Q}_{s}^{n}$ as sequence
of elements $\mathtt{W}_{M(s)}/\mathcal{Q}_{s}^{M(s)}$ and similarly
for $rev(\mathtt{W}_{n})/\mathcal{Q}_{s}^{n}$. It also follows that $\mathcal{Q}_{0}^{n}$ is the equivalence relation
on $\mathtt{W}_{n}$ which has one equivalence class. 
\end{remark}
\begin{remark}
We denote the number of $\mathcal{Q}^n_s$ equivalence classes by $Q^n_s$ and the cardinality of each class by $C^n_s$. In case that the exponent is not relevant we will refer to the $\mathcal{Q}_{s}^{n}$
as $\mathcal{Q}_{s}$. For $u\in\mathtt{W}_{n}$ we write $[u]_{s}$
for its $\mathcal{Q}_{s}^{n}$ class.
\end{remark}

We now list specifications on the group actions.
\begin{itemize}
\item[(A7)] $G_{s}^{n}$ acts freely on $\mathtt{W}_{n}/\mathcal{Q}_{s}^{n}\cup rev(\mathtt{W}_{n}/\mathcal{Q}_{s}^{n})$
and the $G_{s}^{n}$ action is subordinate to the $G_{s-1}^{n}$ action
via the natural homomorphism $\rho_{s,s-1}:G_{s}^{n}\to G_{s-1}^{n}$.
\item[(A8)] The canonical generator of $G^{M(s)}_s$ sends elements of $\mathtt{W}_{M(s)}/\mathcal{Q}_{s}^{M(s)}$ to elements of $rev(\mathtt{W}_{M(s)})/\mathcal{Q}_{s}^{M(s)}$ and vice versa.
\item[(A9)] Suppose $M(s)<n$, $\sigma_{m}$ and $\sigma_{n}$ are consecutive
elements of $\mathcal{T}$ and we view $G_{s}^{n}=G_{s}^{m}\oplus H$.
Then the action of $G_{s}^{m}$ on $\mathtt{W}_{m}/\mathcal{Q}_{s}^{m}\cup rev(\mathtt{W}_{m}/\mathcal{Q}_{s}^{m})$
is extended to an action on action on $\mathtt{W}_{n}/\mathcal{Q}_{s}^{n}\cup rev(\mathtt{W}_{n}/\mathcal{Q}_{s}^{n})$
by the skew diagonal action. If $H$ is nontrivial, then its canonical generator maps $\mathtt{W}_{n}/\mathcal{Q}_{s}^{n}$ to $rev(\mathtt{W}_{n}/\mathcal{Q}_{s}^{n})$.
\end{itemize}

All these specifications build certain symmetries into the words that allow nodes of the tree to give increasingly precise information about invertible graph joinings. The intent of the following specifications is to give a mechanism for showing that any joining not arising from branches through the tree are independent joinings over a non-trivial factor, i.e. they cannot give an isomorphism.

\begin{itemize}
    \item[(J10)] Suppose $\sigma_m$ and $\sigma_n$ are consecutive elements of $\TT$. Let $u$ and $v$ be elements of $\mathtt{W}_n \cup rev(\mathtt{W}_n)$ and let $1\leq t < (1-\epsilon_n)\frac{\mathtt{h}_n}{\mathtt{h}_m}$. Let $j_0$ be a number between $\varepsilon_n\frac{\mathtt{h}_n}{\mathtt{h}_m}$ and $\frac{\mathtt{h}_n}{\mathtt{h}_m}-t$. Then for each pair $u',v' \in \mathtt{W}_m \cup rev(\mathtt{W}_m)$ such that $u'$ has the same parity as $u$ and $v'$ has the same parity as $v$, let $r(u',v')$ be the number of $j<j_0$ such that $(u',v')$ occurs in $(sh^{t\mathtt{h}_m}(u),v)$ in the $(j\mathtt{h}_m)$-th position in their overlap. Then 
    \[
    \abs{\frac{r(u',v')}{j_0}-\frac{1}{s^2_0}}<\epsilon_n.
    \]
    \item[(J11)] Suppose $\sigma_m$ and $\sigma_n$ are consecutive elements of $\TT$. Let $u \in \mathtt{W}_n$ and $v\in \mathtt{W}_n \cup rev(\mathtt{W}_n)$. We let $s=s(u,v)$ be the maximal $i$ such that there is a $g\in G^m_i$ such that $g[u]_i=[v]_i$. Let $g=g(u,v)$ be the unique $g$ with this property and $(u',v')\in \mathtt{W}_m \times \left(\mathtt{W}_m\cup rev(\mathtt{W}_m)\right)$ such that $g[u']_s=[v']_s$. Let $r(u',v')$ be the number of occurrences of $(u',v')$ in $(u,v)$. Then:
    \[
    \abs{\frac{r(u',v')}{\mathtt{h}_n/\mathtt{h}_m}-\frac{1}{Q^n_s} \left(\frac{1}{C^n_s}\right)^2}<\epsilon_n.
    \]
\end{itemize}

To satisfy so-called \emph{timing assumptions} in \cite[Section 7.2.1]{FW3} for the corresponding circular system one also needs this strengthening of a special case.
\begin{itemize}
    \item[(J11.1)] Suppose $\sigma_m$ and $\sigma_n$ are consecutive elements of $\TT$. Let $u \in \mathtt{W}_n$, $v\in \mathtt{W}_n \cup rev(\mathtt{W}_n)$, and $[u]_1\notin G^n_1[v]_1$. Let $j_0$ be a number between $\epsilon_n\frac{\mathtt{h}_n}{\mathtt{h}_m}$ and $\frac{\mathtt{h}_n}{\mathtt{h}_m}$. Suppose that $I$ is either an initial or a tail segment of the interval $[0,\mathtt{h}_n-1]\cap \Z$ having length $j_0\mathtt{h}_m$. Then for every pair $(u',v')\in \mathtt{W}_m \times \left(\mathtt{W}_m\cup rev(\mathtt{W}_m)\right)$ such that $u'$ has the same parity as $u$ and $v'$ has the same parity as $v$, let $r(u',v')$ be the number of occurrences of $(u',v')$ in $(u\upharpoonright I , v\upharpoonright I)$. Then: 
    \[
    \abs{\frac{r(u',v')}{j_0}-\frac{1}{s^2_n}}<\epsilon_n.
    \]
\end{itemize}

Under these specifications and the timing assumptions the following existence result for synchronous and anti-synchronous isomorphisms is shown in \cite[Theorem 92]{FW3}.

\begin{fact}\label{fact:FW3main}
Suppose $\K^c$ is a circular system satisfying the timing assumptions and view it as an element $T$ of $\mathbf{MPT}$. Then:
\begin{enumerate}
    \item If there is an isomorphism $\phi:\K^c \to \K^c$ such that $\phi \notin \overline{\Meng{T^n}{n\in \Z}}$, then there is an isomorphism $\psi:\K^c \to \K^c$ such that $\phi \notin \overline{\Meng{T^n}{n\in \Z}}$ and $\psi^\pi$ is the identity map.
    \item If there is an isomorphism $\phi:\K^c \to (\K^c)^{-1}$, then there is an isomorphism $\psi:\K^c \to (\K^c)^{-1}$ such that $\psi^\pi=\natural$.
\end{enumerate}
\end{fact}

\subsection{The continuous function $F: \mathcal{T}rees \to \mathbf{MPT}$} \label{subsec:FRWmap}
The specifications allow to construct the following continuous map $F: \mathcal{T}rees \to \mathbf{MPT}$, which is the main result of \cite{FRW}. 

\begin{proposition} \label{prop:FRWmain}
There is a continuous one-to-one map $F: \mathcal{T}rees \to \mathbf{MPT}$ such that for $\TT \in \Trees$, if $T=F^s(\TT)$:
\begin{enumerate}
    \item $\TT$ has an infinite branch if and only if $T \cong T^{-1}$
    \item $\TT$ has two distinct infinite branches if and only if 
    \[
    C(T) \neq \overline{ \Meng{T^n}{n\in \Z}}.
    \]
\end{enumerate}
\end{proposition}

More specifically, the following facts for the map $F$ hold true.

\begin{fact} \label{fact:antisyn}
The transformations in the range of $F$ are strongly uniform odometer based transformations and for $S$ in the range of $F$ we have $S\cong S^{-1}$ if and only if there is an anti-synchronous isomorphism between $S$ and $S^{-1}$.
\end{fact}

\begin{fact} \label{fact:syn}
If $S$ is in the range of $F$ and $C(S)\neq \Meng{S^n}{n\in \Z}$, then there is a synchronous isomorphism $\phi \in C(S)\setminus \Meng{S^n}{n\in \Z}$ such that for some $n\in \N$ there is a non-identity element $g\in G^n_1$ satisfying for all generic $s\in \mathbb{K}$ and all large enough $m\in \N$ that if $u$ and $v$ are the principal $m$-subwords of $s$ and $\phi(s)$, respectively, then $[v]_1=g[u]_1$.  
\end{fact}

\begin{fact} \label{fact:continuousTree}
For every $N\in \N$ there is $M\in \N$ such that for $\mathcal{T},\TT'\in \Trees$ with $\TT\cap \Meng{\sigma_n}{n\leq M}=\TT'\cap \Meng{\sigma_n}{n\leq M}$ the first $N$ steps of the construction sequence for $F(\TT)$ are equal to the first $N$ steps of the construction sequence for $F(\TT')$, i.e. $\left(\mathtt{W}_k(\TT)\right)_{k< N}=\left(\mathtt{W}_k(\TT')\right)_{k< N}$.
\end{fact}

\subsection{Numerical requirements} \label{subsec:numerical}
To construct the map $F:\mathcal{T}rees\to \mathbf{MPT}$ one builds
a construction sequence $\left(\mathcal{W}_{n}\left(\mathcal{T}\right)\right)_{n\in\mathbb{N}}$
satisfying our specifications for each $\mathcal{T}\in\mathcal{T}rees$.
Moreover, this has to be done in such a way that $\mathcal{W}_{n}\left(\mathcal{T}\right)$
is entirely determined by $\mathcal{T}\cap\left\{ \sigma_{m}:m\leq n\right\} $.
Therefore, in \cite{FRW} the construction is organized as follows: For
each $n$ and for each subtree $\mathcal{S}\subseteq\left\{ \sigma_{m}:m\leq n\right\} $
and each $\sigma_{m}\in\mathcal{S}$ we build $\mathcal{W}_{m}\left(\mathcal{S}\right)$.
By induction we want to pass from stage $n-1$ to stage $n$. So,
we assume that we have constructed $\left(\mathcal{W}_{m}\left(\mathcal{S}\right)\right)_{\sigma_{m}\in\mathcal{S}}$
for each subtree $\mathcal{S}\subseteq\left\{ \sigma_{m}:m\leq n-1\right\} $.
In the inductive step, we now have to construct $\left(\mathcal{W}_{m}\left(\mathcal{S}\right)\right)_{\sigma_{m}\in\mathcal{S}}$
for each subtree $\mathcal{S}\subseteq\left\{ \sigma_{m}:m\leq n\right\} $. 

Obviously, for those trees $\mathcal{S}$ with $\sigma_{n}\notin\mathcal{S}$
there is nothing to do. So, one has to work on the finitely many trees
$\mathcal{S}$ with $\sigma_{n}\in\mathcal{S}$. We list those in
arbitrary manner as $\left\{ \mathcal{S}_{1},\dots,\mathcal{S}_{E}\right\} $.
Assume that $\mathcal{T}$ is the $e$-th tree on this list and that
we have constructed the collections $\mathcal{W}_{n}\left(\mathcal{S}_{i}\right)$
of $n$-words for all $i\in\left\{ 1,\dots,e-1\right\} $. Let $\mathfrak{P}$
be the collection of prime numbers occurring in the prime factorization
of any of the lengths of the words that we have constructed so far. Then one wants to
construct $\mathcal{W}_{n}\left(\mathcal{T}\right)$, $\mathcal{Q}_{s}^{n}\left(\mathcal{T}\right)$,
and $G_{s}^{n}\left(\mathcal{T}\right)$. For that purpose, one uses the induction assumption that for  
$m$ being the largest number less than $n$ such that $\sigma_{m}\in\mathcal{T}$ 
we have $\mathcal{W}_{m}=\mathcal{W}_{m}\left(\mathcal{T}\right)$,
$\mathcal{Q}_{s}^{m}=\mathcal{Q}_{s}^{m}\left(\mathcal{T}\right)$,
and $G_{s}^{m}=G_{s}^{m}\left(\mathcal{T}\right)$ satisfying our
specifications. 

During the course of the construction several parameters with numerical conditions about growth and decay rates appear. In \cite[Section 10]{FW3} all their recursive requirements and interdependencies are listed and their consistency is resolved. In particular, at stage $n$ the parameter $l_n$ is chosen last. Hence, we can choose it sufficiently large to guarantee the convergence to a real-analytic diffeomorphism in our Theorem \ref{theorem 57}. 

\subsection{$F^s=R\circ \mathcal{F} \circ F$ is a continuous reduction} \label{subsec:reduction}
Finally, we are ready to prove Theorem \ref{theo:main anti}. Therefor, we recall that Theorem \ref{theorem 57} gives to every strongly uniform circular construction sequence $\{W_n\}_{n\in \mathbb{N}}$ with $|W_n|=s_n\to \infty$ and circular coefficients $(k_n ,l_n )_{n\in \mathbb{N}}$, where $(l_n)_{n\in\N}$ grows sufficiently fast, a diffeomorphism $T\in \diffr$ measure theoretically isomorphic to the circular system $\mathbb{K}^c$. Hereby, we obtain a map from circular systems with fast growing coefficients to $\diffr$. We denote this map with our choice of the sequence $(l_n)_{n\in \N}$ from the previous Subsection by $R$ and point out that it preserves isomorphism. 

Hereby, we define the map $F^s=R\circ \mathcal{F} \circ F: \mathcal{T}rees \to \diffr$ and show that it is a continuous reduction. 

\begin{lemma}
$F^s: \mathcal{T}rees \to \diffr$ is a continuous function.
\end{lemma}

\begin{proof}
Let $T=F^s(\mathcal{T})$ for $\mathcal{T} \in \mathcal{T}rees$ and $U$ be an open neighborhood of $T$ in $\diffr$. Let $T^c=\mathcal{F} \circ F(\mathcal{T})$ be the circular system such that $R(T^c)=T$. By Proposition \ref{proposition 58} there is $M\in\N$ sufficiently large such that for all $S\in \diffr$ in the range of $R$ we have the following property: If $\left(\mathcal{W}_n(T^c)\right)_{n\leq M}=\left(\mathcal{W}_n(S^c)\right)_{n\leq M}$, then $S\in U$. Here, $S^c$ denotes the circular system such that $S=R(S^c)$. Moreover, $\left(\mathcal{W}_n(T^c)\right)_{n\in \N}$ and $\left(\mathcal{W}_n(S^c)\right)_{n\in \N}$ denote the construction sequences of $T^c$ and $S^c$, respectively. We recall from Subsection \ref{subsec:catfunc} that $\left(\mathcal{W}_n(T^c)\right)_{n\leq M}$ is determined by the first $M+1$ members in the construction sequence of the odometer based system $F(\mathcal{T})$, i.e. it is determined by $\left(\mathtt{W}_n(\mathcal{T})\right)_{n\leq M}$. By Fact \ref{fact:continuousTree} there is a basic open set $V\subseteq \mathcal{T}rees$ containing $\mathcal{T}$ such that for all $\mathcal{S} \in \mathcal{T}rees$ the first $M+1$ members of the construction sequences of $F(\mathcal{T})$ and $F(\mathcal{S})$ are the same, i.e. $\left(\mathtt{W}_n(\mathcal{T})\right)_{n\leq M}=\left(\mathtt{W}_n(\mathcal{S})\right)_{n\leq M}$. Then it follows that $F^s(\mathcal{S})\in U$ for all $\mathcal{S} \in V$.
\end{proof}

To conclude the proof of Theorem \ref{theo:main anti} we show that $F^s$ is a reduction.
\begin{proof}[Proof of Theorem \ref{theo:main anti}]
To see that $F^s$ is a reduction, it suffices to check that $\mathcal{F} \circ F$ is a reduction because $R$ preserves isomorphism. This follows from \cite[Section 8.1]{FW3} and we only present the proof of the first part of the Theorem. Here, we let $T=F^s(\TT)$, $\K=F(\TT)$, and $\K^c=\mathcal{F}\circ F(\TT)$ for $\TT\in \Trees$.

Suppose $\TT\in \Trees$ has an infinite branch. By Proposition \ref{prop:FRWmain} and Fact \ref{fact:antisyn} there is an anti-synchronous isomorphism $\phi:\K \to \K^{-1}$. Then we can apply Fact \ref{fact:functor} on the functor $\mathcal{F}$ and obtain that there is an anti-synchronous isomorphism $\phi^c:\K^c\to (\K^c)^{-1}$.
    
To show the converse direction we suppose that $T\cong T^{-1}$, which yields $\K^c\cong (\K^c)^{-1}$. Since the construction sequence $(\mathcal{W}_n)_{n\in \N}$ for $\K^c$ satisfies the timing assumptions, Fact \ref{fact:FW3main} shows that there is an anti-synchronous isomorphism $\phi^c:\K^c\to (\K^c)^{-1}$. Hence, we can apply Fact \ref{fact:functor} again and obtain that there is an anti-synchronous isomorphism $\phi:\K \to \K^{-1}$. By Proposition \ref{prop:FRWmain} $\TT$ has an infinite branch.
\end{proof}

\section{Further applications}

\subsection{An uncountable family of pairwise non-Kakutani equivalent real-analytic diffeomorphisms} \label{subsec:uncountable}
We recall the definition of the $\overline{f}$ distance between strings of symbols introduced by Feldman \cite{Fe} (a zero entropy version of this property was introduced independently by Katok \cite{K77}).
\begin{definition}
\label{def:fbar}A \emph{match} between two strings of symbols $a_{1}a_{2}\dots a_{n}$ and $b_{1}b_{2}\dots b_{m}$
from a given alphabet $\Sigma,$ is a collection $I$ of pairs of indices $(i_s,j_s)$, $s=1,\dots,r$ such that $1\le i_1 <i_2<\cdots <i_r\le n$, $1\le j_1 <j_2 <\cdots <j_r\le m$ and $a_{i_s}=b_{i_s}$ for $s=1,2,\dots,r.$ Then 
\begin{equation*}
\overline{f}(a_{1}a_{2}\dots a_{n},b_{1}b_{2}\dots b_{m}) = \displaystyle{1-\frac{2\sup\{|I|:I\text{\ is\ a\ match\ between\ }a_{1}a_{2}\cdots a_{n}\text{\ and\ }b_{1}b_{2}\cdots b_{m}\}}{n+m}.}
\end{equation*} 
\end{definition}
We will refer to $\overline{f}(a_{1}a_{2}\cdots a_{n},b_{1}b_{2}\cdots b_{m})$
as the ``$\overline{f}$-distance'' between $a_{1}a_{2}\cdots a_{n}$
and $b_{1}b_{2}\cdots b_{m},$ even though $\overline{f}$ does not
satisfy the triangle inequality unless the strings are all of the
same length. A match $I$ is called a \emph{best possible match} if it realizes the supremum in the definition of $\overline{f}$.

In the case of zero entropy, the $\overline{f}$ distance allows a simple definition of the loosely Bernoulli property.
\begin{definition}[Loosely Bernoulli in the case of zero entropy]
\label{def:zeroLB}
A measure-preserving process $(T,\mathcal{P},\nu)$ is \emph{zero-entropy loosely Bernoulli
}if for every $\varepsilon>0,$ there exists a positive integer $K=K(\varepsilon)$
and a collection $\mathcal{G}$ of ``good'' atoms of $\lor_{1}^{K}T^{-i}\mathcal{P}$
with total measure greater than $1-\varepsilon$ such that for each
pair $A,B$ of atoms in $\mathcal{G},$ $\overline{f}_{K}(x,y)<\varepsilon$
for $x\in A,$ $y\in B.$ 
\end{definition}

\begin{remark}
If this condition is satisfied, then routine estimates show that the
$(T,\mathcal{P},\nu)$ process indeed has zero entropy. Sometimes, zero-entropy loosely Bernoulli transformations are called \emph{standard} or \emph{loosely Kronecker}.
\end{remark}

The following well-known fact from \cite{Fe} and \cite{ORW} as stated in \cite[Property 2.4]{GeKu} will prove useful in our arguments.

\begin{fact}\label{property:omit_symbols}
Suppose $a$ and $b$ are strings of symbols
of length $n$ and $m,$ respectively, from an alphabet $\Sigma$.
If $\tilde{a}$ and $\tilde{b}$ are strings of symbols obtained by
deleting at most $\lfloor\gamma(n+m)\rfloor$ terms from $a$ and
$b$ altogether, 
where $0<\gamma<1$, then 
\begin{equation}
\overline{f}(a,b)\ge\overline{f}(\tilde{a},\tilde{b})-2\gamma.\label{eq:omit_symbols}
\end{equation}
Moreover, if there exists a best possible match between $a$ and $b$ such that no term that is deleted from $a$ and $b$ to form $\tilde{a}$ and $\tilde{b}$ is matched with a non-deleted term, then
\begin{equation}
\overline{f}(a,b)\ge\overline{f}(\tilde{a},\tilde{b})-\gamma.\label{eq:omit_symbols2}
\end{equation}
Likewise, if $\tilde{a}$ and $\tilde{b}$ are obtained by adding at most $\lfloor\gamma(n+m)\rfloor$ symbols to $a$ and $b$, then (\ref{eq:omit_symbols2}) holds. 
\end{fact}

Using the notation from Section \ref{subsec:catfunc} we define the $(n+1)$-blocks $\mathtt{w}^{(n+1)}_j$, $j=1,\dots, s_{n+1}$, in the odometer-based system as follows: 
\[
\mathtt{w}^{(n+1)}_j = \left(\left(\mathtt{w}^{(n)}_1\right)^{(l_n-1)^{2j-1}} \left(\mathtt{w}^{(n)}_2\right)^{(l_n-1)^{2j-1}} \dots \left(\mathtt{w}^{(n)}_{s_n}\right)^{(l_n-1)^{2j-1}}\right)^{(l_n-1)^{2\cdot (s_{n+1}-j)+1}}.
\]
Applying the circular operator $\mathcal{C}_n$ yields the following $(n+1)$-blocks in the corresponding circular system:
\[
\prod^{q_n -1}_{i=0} \left(\prod^{s_n}_{t=1} \left( b^{q_n-j_i} \left(c_n\left(\mathtt{w}^{(n)}_t\right)\right)^{l_n-1}e^{j_i}\right)^{(l_n-1)^{2j-1}}\right)^{(l_n-1)^{2\cdot (s_{n+1}-j)+1}} \ \ \text{ for } \ \ j=1,\dots, s_{n+1}.
\]
Since the newly introduced spacers occupy a proportion of at most $\frac{2}{l_n}$ in substantial substrings of these blocks with at least $\frac{q_{n+1}}{q_n}$ consecutive symbols, we will ignore them in the following consideration. This might decrease the $\overline{f}$ distance between substantial substrings by at most $\frac{4}{l_n}$ according to the aforementioned Fact \ref{property:omit_symbols}. After ignoring the new spacers, the $(n+1)$-blocks in the circular system look as follows for $j=1,\dots, s_{n+1}$:
\[
\left(\left(c_n\left(\mathtt{w}^{(n)}_1\right)\right)^{(l_n-1)^{2j}}\left(c_n\left(\mathtt{w}^{(n)}_2\right)\right)^{(l_n-1)^{2j}} \dots \left(c_n\left(\mathtt{w}^{(n)}_{s_n}\right)\right)^{(l_n-1)^{2j}} \right)^{q_n(l_n-1)^{2(s_{n+1}-j)+1}}.
\]
Apparently, they are of the same form as the patterns in \cite{Fe}. As already observed in \cite{ORW}, the explicit number of blocks and lengths of cycles do not matter for the sake of the argument. It is just important that the number $s_{n+1}$ of types tends to infinity and that the ratio of the length of a cycle in type $j$ and a block of identical types in $j+1$ tends to $0$ rapidly, which in our case corresponds to
\[
\frac{s_n(l_n-1)^{2j}q_n}{(l_n-1)^{2j+2}q_n}=\frac{s_n}{(l_n-1)^2} \to 0 \ \text{ as } \ n \to \infty. 
\]
Hence, we can apply the analysis in \cite[Section 5]{Fe} or \cite[Chapter 10]{ORW} to show that the real-analytic realization $T\in \text{Diff }^\omega_\rho(\T^2,\mu)$ of this strongly uniform circular system is ergodic and not loosely Bernoulli.

To show that there even exists an uncountable family of diffeomorphisms in $\text{Diff }^\omega_\rho(\T^2,\mu)$ which are pairwise not Kakutani-equivalent we follow the approach in \cite[Chapter 12]{ORW}. Let ${\mathbf a}= \left(a_n\right)_{n \in \N}$ be an infinite sequence of zeros and ones. If $a_{n+1}=0$, we construct the $(n+1)$-blocks as described above. In case of $a_{n+1}=1$, we define the $(n+1)$-blocks in the construction sequence of the odometer-based system as
\[
\mathtt{w}^{(n+1)}_j = \left(\left(\mathtt{w}^{(n)}_{s_n}\right)^{(l_n-1)^{2j-1}} \left(\mathtt{w}^{(n)}_{s_n-1}\right)^{(l_n-1)^{2j-1}} \dots \left(\mathtt{w}^{(n)}_{1}\right)^{(l_n-1)^{2j-1}}\right)^{(l_n-1)^{2\cdot (s_{n+1}-j)+1}},
\]
i.\ e.\ we run through the types of $n$-blocks in a cycle in decreasing order this time. As above, we consider the corresponding strongly uniform circular system and realize it as a diffeomorphism $T_{\mathbf a} \in \text{Diff }^\omega_\rho(\T^2,\mu)$. As shown in \cite[Chapter 12, Proposition 3.4]{ORW}, if $a_n \neq b_n$ for infinitely many $n \in \N$, then $T_{\mathbf a}$ and $T_{\mathbf b}$ are not Kakutani-equivalent. Certainly, this gives an uncountable family and proves Theorem \ref{theo:uncountable}. 

\subsection{Real-analytic non-Bernoulli diffeomorphisms with property $K$} \label{subsec:nonBern}
We follow along the lines of \cite{Ka1}, where the first $C^{\infty}$ example of a non-Bernoulli diffeomorphism with property $K$ was constructed. For that purpose, let $A$ be a hyperbolic automorphism of $\T^2$ and let $S\in \text{Diff }^\omega_\rho(\T^2,\mu)$ be an ergodic and not loosely Bernoulli diffeomorphism as constructed in the previous Subsection. We consider its time one suspension $\{S_t\}_{t\in \R}$ on $N= \T^2 \times [0,1] / \sim$, where $(x,1)\sim (S(x),0)$. Then we define the real-analytic diffeomorphism
\[
T: \T^2 \times N \to \T^2 \times N, \ T\left(x,y\right)=\left(Ax, S_{\varphi(x)}(y)\right)
\]
with a real-analytic map $\varphi:\T^2 \to \R$ which is not cohomologous to a constant (i.\ e.\ $\varphi$ cannot be written as $\varphi= \psi \circ A - \psi + c$ with measurable $\psi: \T^2 \to \R$ and $c\in \R$). Then $T$ has property $K$ by \cite[Theorem 1]{Ka1}. To see that $T$ is not Bernoulli we consider the special flow $\{A^{\varphi}_t\}_{t \in \R}$ on $M^{\varphi}=\Meng{(x,s)\in \T^2 \times \R}{0\leq s \leq \varphi(x)}$ and the flow $\tilde{T}_t=A^{\varphi}_t \times S_t$ on $M^{\varphi}\times N$. Since $S$ is not loosely Bernoulli, $\tilde{T}_t$ is not loosely Bernoulli. Moreover, we see that $T$ is the Poincare map of $\tilde{T}_t$ on $M\times\{0\} \times N$. Hence, $T$ as a section of a non-loosely Bernoulli flow cannot be loosely Bernoulli.

\vfill

\begin{center}
    \noindent\textbf{Acknowledgement}
\end{center}

\vspace{.5cm}

\noindent The authors of this paper express their gratitude towards Anatole Katok for asking the question this paper answers. He also helped the authors with numerous mathematical discussions and a steady stream of encouragement. 

Also this paper would not be possible without the discussions the authors had with Matthew Foreman who patiently explained several intricacies of his delicate work and took an interest towards the current result. He also showed the first authors how to produce diagrams using spreadsheet software. The second author also thanks Marlies Gerber for several discussions in the realm of anti-classification results.

\newpage


\begin{thebibliography}{ABCD70}
\bibitem[AK70]{AK} D.~Anosov \& A.~Katok:
{\itshape New examples in smooth ergodic theory. Ergodic diffeomorphisms}. Trudy Moskov. Mat. Obsc., 23 (1970) 3--36

\bibitem[Ba17]{Ba-Ns} S.~Banerjee: \emph{Non-standard real-analytic realization of some rotations of the circle.} Ergodic Theory \& Dynam. Systems 37 (2017), no. 5, 1369--1386

\bibitem[BK19]{Ba-Ku} S.~Banerjee \& P.~Kunde: \emph{Real-analytic AbC constructions on the torus}. Ergodic Theory \& Dynam. Systems, 39 (2019), no. 10, 2643-2688

\bibitem[Be15]{Ben} M.~Benhenda:
{\itshape An uncountable family of pairwise non-Kakutani equivalent smooth diffeomorphisms}.  J. Anal. Math. 127 (2015), 129--178

\bibitem[BF96]{BF} F.~Beleznay \& M.~Foreman: \emph{The complexity of the collection of measure-distal transformations}, Ergodic Theory Dynam. Systems 16 (1996), no. 5, 929--962



\bibitem[Fe76]{Fe} J.~Feldman: New K-automorphisms and a problem
of Kakutani. Israel J. Math. 24 (1976), no. 1, 16--38

\bibitem[FK04]{FK} B.~Fayad \& A.~Katok: \emph{Constructions in elliptic dynamics.} Ergodic Theory \& Dynam. Systems 24 (2004), no. 5, 1477--1520

\bibitem[FK14]{FK-ue} B.~Fayad \& A.~Katok: \emph{Analytic uniquely ergodic volume preserving maps on odd spheres,} Commentarii Mathematici Helvetici. 89 (2014), no. 4, 963--977

\bibitem[FRW11]{FRW} M.~Foreman, D.~Rudolph, \& B.~Weiss: \emph{The conjugacy problem in ergodic theory}, Ann. of Math. (2) 173 (2011), no. 3, 1529--1586

\bibitem[FS05]{FS} B.~Fayad \& M.~Saprykina: \emph{Weak mixing disc and annulus diffeomorphisms with arbitrary Liouville rotation number on the boundary.}  Ann. Sci. École Norm. Sup. (4) 38 (2005), no. 3, 339--364

\bibitem[FSW07]{FSW} B.R. Fayad, M. Saprykina \& A. Windsor: \emph{Non-Standard smooth realizations of Liouville rotations.} Ergodic Theory \& Dynam. Systems 27 (2007), no. 6, 1803--1818

\bibitem[FW04]{FW04} M.~Foreman \& B.~Weiss: \emph{An anti-classification theorem for ergodic measure preserving transformations}, J. Eur. Math. Soc. (JEMS) 6 (2004), no. 3, 277--292

\bibitem[FW19a]{FW1} M.~Foreman \& B.~Weiss: \emph{A symbolic representation for Anosov-Katok Systems}. Journal d'Analyse Mathematique 137 (2019), no. 2, 603--661

\bibitem[FW19b]{FW2} M.~Foreman \& B.~Weiss: \emph{From Odometers to Circular Systems: A global structure theorem}. Journal of Modern Dynamics, 15 (2019), 345--423

\bibitem[FW3pp]{FW3} M.~Foreman \& B.~Weiss: \emph{Measure Preserving Diffeomorphisms of the Torus are Unclassifiable}. Preprint, 	arXiv:1705.04414

\bibitem[GKpp]{GeKu} M. Gerber \& P. Kunde: \emph{Loosely Bernoulli Odometer-Based Systems Whose Corresponding Circular Systems Are Not Loosely Bernoulli}. Preprint, arXiv:1912.12684

\bibitem[HN42]{HV} P.~Halmos \& J.~von~Neumann: \emph{Operator methods in classical mechanics. II}, Ann. of Math. (2) 43 (1942), 332--350 

\bibitem[Hj01]{H} G.~Hjorth: \emph{On invariants for measure preserving transformations}, Fund. Math. 169 (2001), no. 1, 51--84

\bibitem[Ka73]{BK} A.~Katok: \emph{Ergodic perturbations of degenerate integrable Hamiltonian systems (Russian).} Izv. Akad. Nauk SSSR Ser. Mat. 37 (1973), 539--576

\bibitem[Ka75]{K75} A.~Katok: Time change, monotone equivalence, and standard dynamical systems. Dokl. Akad. Nauk SSSR 223 (1975), no. 4, 784-792

\bibitem[Ka77]{K77} A.~Katok: Monotone equivalence in ergodic theory. Mathematics of the USSR-Izvestiya 11 (1977), no. 1, 104--157

\bibitem[Ka80]{Ka1} A.~Katok: {\itshape Smooth Non-Bernoulli K-automorphisms}. Invent. Math. 61 (1980) 291--299

\bibitem[Ka03]{Ka03} A.~Katok: \emph{Combinatorial constructions in ergodic theory and dynamics}, University Lecture Series, 30. American Mathematical Society, Providence, RI, 2003. ISBN: 0-8218-3496-7

\bibitem[Ke95]{Ke} A.~Kechris: {\itshape Classical descriptive set theory}, Graduate Texts in Mathematics, 156. Springer-Verlag, New York, 1995. ISBN: 0-387-94374-9 

\bibitem[KRV18]{KRV} A.~Kanigowski, F.~Rodriguez-Hertz \& K.~Vinhage:
{\itshape On the non-equivalence of the Bernoulli and K-properties in dimension four}. J. Mod. Dyn. 13 (2018) 221--250

\bibitem[Ku17]{Kana} P.~Kunde:
{\itshape Real-analytic weak mixing diffeomorphism preserving a measurable Riemannian metric}. Ergodic Theory \& Dynamical Systems, 37 (2017), no. 5,  1547--1569

\bibitem[Ly99]{L} M.~Lyubich: \emph{Feigenbaum-Coullet-Tresser universality and Milnor's hairiness conjecture.} Ann. of Math. (2) 149 (1999), no. 2, 319--420

\bibitem[Ne32]{V} J.~von~Neumann: \emph{Zur Operatorenmethode in der klassischen Mechanik}, Ann. of Math. (2) 33 (1932), no. 3, 587--642

\bibitem[Or70]{O} D.~Ornstein: \emph{Bernoulli shifts with the same entropy are isomorphic.} Advances in Math. 4 (1970), 337--352

\bibitem[Or73]{Or} D.~Ornstein:
{\itshape An example of a Kolmogorov automorphism that is not a Bernoulli shift}. Advances in Math. 10 (1973) 49--62

\bibitem[ORW82]{ORW} D.~Ornstein, D.~Rudolph \& B.~Weiss: Equivalence
of measure preserving transformations. Mem. Amer. Math. Soc. 37 (1982) no. 262

\bibitem[Pe77]{Pe1} Y.~Pesin:
{\itshape Characteristic Lyapunov exponents and smooth ergodic theory}. Uspehi Mat. Nauk. 32 (1977) 55--112

\bibitem[Ru88]{Ru} D.~Rudolph: 
{\itshape Asymptotically Brownian skew products give non-loosely Bernoulli K-automorphisms.} Invent. Math. 91(1) (1988), 105--128

\bibitem[Sa03]{S} M.~Saprykina: \emph{Analytic non-linearizable uniquely ergodic diffeomorphisms on $\T^2$.} Ergodic Theory \& Dynam. Systems, 23 (2003), no. 3, 935--955

 
















\end{thebibliography}
\end{document}